\newtheorem{theorem}{Theorem}
\newtheorem{proposition}{Proposition}[section]
\newtheorem{lemma}[proposition]{Lemma}
\newtheorem{corollary}[proposition]{Corollary}
\newtheorem{question}[proposition]{Question}
\theoremstyle{definition}
\newtheorem{definition}[proposition]{Definition}
\newtheorem{remark}[proposition]{Remark}
\newtheorem{example}[proposition]{Example}
\newcommand{\pt}{\mathbf {x}}
\newcommand{\Proj}{\mathrm{Proj }}
\newcommand{\cF}{\mathcal F}
\newcommand{\fm}{\mathfrak{m}}
\newcommand{\fb}{\mathfrak{b}}
\newcommand{\fc}{\mathfrak{c}}
\newcommand{\PP}{\mathbb{P}}
\newcommand{\p}{\mathbb{P}}
\newcommand{\Z}{\mathbb{Z}}
\newcommand{\Aut}{\mathrm{Aut}}
\newcommand{\SL}{\mathrm{SL}}
\newcommand{\A}{\mathbb{A}}
\newcommand{\C}{\mathbb{C}}
\newcommand{\N}{\mathbb{N}}
\newcommand{\G}{\mathbb{G}}
\newcommand{\cO}{\mathcal O}
\newcommand{\Spec}{\mathrm{Spec}}
\newcommand{\bideg}{\mathrm{bideg}}
\newcommand{\gr}{\mathrm{gr}}
\newcommand{\Bl}{\mathrm{Bl}}
\newcommand{\longto}{\to}
\renewcommand{\epsilon}{\varepsilon}
\renewcommand{\phi}{\varphi}
\newcommand{\quot}{/\!\!/}
\title[Affine extensions of principal additive bundles over a punctured surface]{Affine extensions of principal additive bundles over a punctured surface}
\author{Isac Hed\'en}
\address{Isac Hed\'en\\
 Mathematisches Institut\\
Universit\"at Basel\\
 Rheinsprung 21\\
  4051 Basel, Switzerland}
\email{Isac.Heden@unibas.ch}
\thanks{This work was done as part of PhD studies at the Department of Mathematics, Uppsala University; the support from the Swedish graduate school in Mathematics and Computing (FMB) is gratefully acknowledged. Many thanks also go to Karl-Heinz Fieseler for his supervision.}
\date{\today}
\begin{document}
\begin{abstract}{The aim of this article is to make a first step towards the classification of complex normal affine $\G_a$-threefolds $X$. We consider the case where the  restriction of the quotient morphism $\pi\colon X\to S$ to $\pi^{-1}(S_*)$, where $S_*$ denotes the  complement of some regular closed point in $S$, is a principal $\G_a$-bundle. The variety $\SL_2$ will be of special interest and a source of many examples. It has a natural right $\G_a$-action such that the quotient morphism $\SL_2\to\A^2$ restricts to a principal $\G_a$-bundle over the punctured plane $\A^2_*$.}\end{abstract}
\subjclass[2010]{14R20}

\maketitle

\vspace{-5ex}
\section{Introduction}

Given a complex normal affine variety $X$ with an algebraic $\G_a$-action, the ring $\cO(X)^{\G_a}$ of invariants is finitely generated if $\dim X\leq 3$ \cite[p. 45]{Nag59}, so we can always define a quotient variety $X\quot\G_a:=\Spec(\cO(X)^{\G_a})$ in this case. This quotient variety is of dimension $\dim X-1$ unless the $\G_a$-action is the trivial one.
Let $\pi\colon X\to X\quot\G_a$ be the quotient morphism, and denote by $(X\quot\G_a)_*\subset X\quot\G_a$ the union of all open subsets $U_i\subset X\quot\G_a$ over which there is a $\G_a$-invariant trivialization $\pi^{-1}(U_i)\stackrel\sim\longto U_i\times\G_a$. Then $(X\quot\G_a)_*$ is the maximal open subset $V\subset X\quot\G_a$ such that $\pi|_{\pi^{-1}(V)}\colon \pi^{-1}(V)\to V$ is a principal $\G_a$-bundle; this set is always nonempty.

If $X$ is a surface, the quotient morphism is surjective and $(X\quot\G_a)_*$ is affine. In particular $\pi^{-1}((X\quot\G_a)_*)\subset X$ is equivariantly isomorphic to $(X\quot\G_a)_*\times\G_a$, where $\G_a$ acts by translation on the second factor. It is shown in \cite{Fie94} that complex normal affine $\G_a$-surfaces are classified by the quotient $X\quot\G_a$ and neighbourhoods of the fibers of the points in $X\quot\G_a\setminus (X\quot\G_a)_*$.\par
If $X$ is a normal threefold, the quotient $S:=X\quot\G_a$ is a normal affine surface, and we will study the threefolds for which $(X\quot\G_a)_*=S\setminus\{\pt\}$ for some closed regular point $\pt\in S$. In order to do this in a systematic way, we introduce the following notion.

\begin{definition}
\label{def:affext}
Let $S_*\subset S$ be the open subvariety of an affine normal surface $S$ which is obtained by removing a closed regular point $\pt$. An affine extension of a principal $\G_a$-bundle $\pi\colon P\to S_*$ is a normal affine $\G_a$-variety $\hat P=\Spec(B)$ together with a morphism $\hat\pi\colon\hat P\to S$ and a $\G_a$-equivariant dominant open embedding $\iota\colon P\hookrightarrow\hat P$ with $\iota (P)= \hat \pi^{-1}(S_*)$, which makes the  following diagram commute \\
$$
\begin{minipage}{0.32\textwidth}
\[\xymatrix{
P\ar@{^{(}->}[r]^\iota\ar[d]_{\pi} & \hat P\ar[d]^{\hat\pi}\\
S_*\ar@{^{(}->}[r] & S}\]
\end{minipage}\ .
$$

We will use the notation $E=\hat\pi^{-1}(\pt)$, $A=\cO(P)$,  $B=\cO(\hat P)\stackrel{\iota^*}\hookrightarrow A$, and $\fm_\pt\subset\cO(S)$ for the exceptional fiber, the regular functions on $P$, the subalgebra of regular functions on $P$ that extend to $\hat P$, and the maximal ideal of $\pt$, respectively.
\end{definition}
\begin{remark} The diagram from Definition~\ref{def:affext} on the algebraic side looks as follows 
\[\xymatrix{
\cO(P)^{\G_a}&\cO(\hat P)^{\G_a} \ar@{_{(}->}[l]_{\iota^*} \\
\cO(S_*)\ar[u]^{\simeq}  & \cO(S)\ar[l]_{\simeq}\ar@{_{(}->}[u]_{{\hat\pi}^*}}\]
if we take restrictions to the invariant algebras. It follows in particular that $\hat P\quot\G_a = S$ and that $\hat \pi\colon\hat P\to S$ is the quotient morphism.
\end{remark}

To start with, we devote section~\ref{sec:trivial} to extensions of the trivial principal $\G_a$-bundle $S_*\times\G_a\to S_*$.
\begin{theorem}
\label{thm:2cases}

For an affine extension $\hat P$ of the trivial bundle $S_* \times \G_a$, the morphism $\iota$ extends to a morphism $j\colon S \times \G_a \to \hat P$ which is either an open embedding
or contracts $\{\pt\}\times\G_a$ to a singular point $p_0\in\hat P$. In the first case either $j$ is an isomorphism or $E=j(\{\pt\}\times \G_a) \cup E_{(2)}$ is a disjoint union with a purely two dimensional set $E_{(2)}$; in the second case the exceptional fiber $E$ is purely two dimensional.
\end{theorem}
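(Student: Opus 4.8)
\smallskip
\noindent\emph{Sketch of the intended argument.}

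The plan is to pass to coordinate rings and read off everything from the inclusion $B\hookrightarrow A$. Since $S$ is normal and $\pt$ is a regular---hence codimension two---point, the $S_2$ property gives $\cO(S_*)=\cO(S)=:R$, so that $A=\cO(P)=\cO(S_*\times\G_a)=R[t]$ coincides with $\cO(S\times\G_a)$, and $P$ is the complement of the codimension two set $\{\pt\}\times\G_a$ in $S\times\G_a$. The inclusion $\iota^*\colon B\hookrightarrow R[t]=\cO(S\times\G_a)$ is itself the comorphism of a morphism $j\colon S\times\G_a\to\hat P$; chasing the diagram of the Remark shows $\hat\pi\circ j=\pr_S$ and $j|_P=\iota$, whence $j(\{\pt\}\times\G_a)\subseteq E=\hat\pi^{-1}(\pt)$. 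I record for later use that $B$ is a $\G_a$-stable subalgebra of $R[t]$ with $B^{\G_a}=R$, that $V_S(\fm_\pt)=\{\pt\}$ so that $E=\Spec(B/\fm_\pt B)$, and that $\iota$ being an isomorphism onto $\hat P\setminus E$ means $B_f=R_f[t]$ for every $0\neq f\in\fm_\pt$.

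The dichotomy comes from reducing modulo $\fm_\pt$. As $R/\fm_\pt=\C$, the equivariant surjection $R[t]\to\C[t]$ carries $B$ onto a subalgebra $\bar B\subseteq\C[t]$ stable under the translation action, i.e.\ under $d/dt$; since a $d/dt$-stable subalgebra containing any nonconstant polynomial contains, after repeated differentiation, a linear one and hence all of $\C[t]$, we get $\bar B\in\{\C,\C[t]\}$. The restriction $j_0:=j|_{\{\pt\}\times\G_a}\colon\A^1\to E$ has comorphism $B\to\bar B$, so it contracts $\{\pt\}\times\G_a$ to a fixed point $p_0$ when $\bar B=\C$, and is a closed immersion onto a one-dimensional orbit $O_0\cong\A^1$ when $\bar B=\C[t]$. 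These are precisely the two alternatives of the statement. Moreover $\bar B=\C[t]$ splits further: either $t\in B$, forcing $B=R[t]$ and $j$ an isomorphism, or $B\subsetneq R[t]$; and in the latter case normality forces $E$ to have a two-dimensional component, for if $E$ had codimension $\geq 2$ then $\cO(\hat P)=\cO(\hat P\setminus E)=R[t]$ by $S_2$, contradicting $B\subsetneq R[t]$. By the fibre-dimension theorem every component of $E$ has dimension $\geq 1$, so the two-dimensional ones are exactly the divisorial components $E_i$, and $B=\{f\in R[t]:v_{E_i}(f)\geq 0\ \forall i\}$.

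Granting the structure of $E$ discussed below, the remaining assertions follow formally. In the case $\bar B=\C[t],\ B\neq R[t]$ the orbit $O_0$ is closed in $\hat P$ (orbits of unipotent groups on affine varieties are closed); if $E=O_0\sqcup E_{(2)}$ with $O_0$ clopen in $E$, then $W:=\hat P\setminus E_{(2)}$ is open, contains $P$ and $O_0$, and $W\setminus O_0=P$. The map $j\colon S\times\G_a\to W$ is then birational (an isomorphism over $P$) and bijective ($\{\pt\}\times\G_a\to O_0$ being an isomorphism onto the orbit and the two images being disjoint), hence quasi-finite, so Zariski's Main Theorem upgrades it to an isomorphism $S\times\G_a\xrightarrow{\ \sim\ }W$: thus $j$ is an open embedding. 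In the contraction case the expectation is that $p_0$ is singular; I would read this off from the local structure of $B$ at $p_0$---in the model $B=\C[u,v,ut,vt]$ it is the non-factorial conifold point $\{uy=vx\}$---the two-dimensional fibre $E$ through the fixed point $p_0$ being incompatible with smoothness.

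The technical heart, and the step I expect to be the main obstacle, is the analysis of the components of $E=\Spec(B/\fm_\pt B)$: that $E_{(2)}$ (resp.\ $E$) is \emph{purely} two-dimensional, and that in the first case $O_0$ is a connected component. Equivalently one must exclude every one-dimensional component of $E$ other than $O_0$; such a component would be, by the closedness of unipotent orbits, either an isolated closed free orbit $\cong\A^1$ or a curve of fixed points, and it would sit in $E$ in codimension two. The plan is to derive a contradiction from the bundle equalities $B_f=R_f[t]$ ($f\in\fm_\pt$) together with the $S_2$ property: these force $B$ to agree with $R[t]$ at every codimension-two point lying off the divisorial part of $E$, so a stray one-dimensional component would be invisible to $\cO(\hat P)$ yet map to $\pt$, which is what has to be ruled out. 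The disjointness $E=O_0\sqcup E_{(2)}$ I would obtain by exhibiting elements of $B$---built from a slice $b_1\in B$ with $b_1\equiv t\ (\mathrm{mod}\ \fm_\pt A)$---that separate $O_0$ from the divisorial locus, exactly as the function $1+ut$ does in the example $B=\C[u,v,ut,vt,\,t+ut^2]$, where $E=O_0\sqcup E_{(2)}$ with $O_0=E\cap D(1+ut)$. Turning these model computations into a general argument is where the real work lies.
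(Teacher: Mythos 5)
There is a genuine gap --- in fact the proposal leaves precisely the substantive parts of the theorem unproven, and says so. Your setup and the dichotomy are fine and match the paper's: reducing $B$ modulo $\fm_\pt$ and using $d/dt$-stability of $\bar B\subseteq\C[t]$ is a correct algebraic rendering of the paper's one-line observation that the equivariant map $j|_{\{\pt\}\times\G_a}$ is either injective or constant. But from there your logic is inverted. You make the open-embedding conclusion contingent on first establishing the clopen decomposition $E=O_0\sqcup E_{(2)}$ with $E_{(2)}$ purely two-dimensional, and you explicitly defer that step (``where the real work lies''). The paper does the opposite, and the order matters: since $j_0$ is a closed immersion, $j$ is injective on $\{\pt\}\times\G_a$, injective on $S_*\times\G_a$ (where it equals $\iota$), and the two images are disjoint because one lies in $E$ and the other in $\hat\pi^{-1}(S_*)$; hence $j$ is an injective birational morphism into the normal variety $\hat P$ and Zariski's Main Theorem \emph{directly} makes it an open embedding. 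The decomposition then comes out for free: $E_{(2)}:=\hat P\setminus j(S\times\G_a)$ is disjoint from $j(\{\pt\}\times\G_a)$ by construction and is purely two-dimensional because the complement of an affine open subset has pure codimension one. You had all the ingredients (injectivity of $j$) to run this argument but instead routed through a bijectivity-onto-$W$ statement that presupposes the very structure of $E$ you could not prove.

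Two further steps are not merely deferred but argued incorrectly or not at all. First, your justification that $p_0$ is singular --- ``the two-dimensional fibre $E$ through the fixed point $p_0$ being incompatible with smoothness'' --- is false as a general principle: $(x,y,z)\mapsto(xz,yz)$ on $\A^3$ has a two-dimensional fiber over the origin passing through smooth points, and the conifold model does not by itself cover the general case. The paper's argument is different: if $p_0$ were smooth, a nonvanishing $3$-form $\omega$ near $p_0$ would pull back under $j$ to a $3$-form on the smooth threefold $j^{-1}(V)$ whose zero locus is exactly $j^{-1}(V)\cap(\{\pt\}\times\G_a)$, a set of codimension two --- impossible for the zero divisor of a section of the canonical bundle. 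Second, the purity of $E$ in the contraction case (which you also fold into the unproven ``technical heart'') is handled in the paper by a short normality argument: $\cO(\hat P\setminus E_{(2)})\simeq\cO(\hat P\setminus E)=\cO(S\times\G_a)$, so the inclusion $\hat P\setminus E_{(2)}\hookrightarrow\hat P$ factors through $j$, whose image is $\iota(P)\cup\{p_0\}$; this forces $E_{(1)}=\emptyset$. You should replace the deferred ``analysis of the components of $E$'' with these two arguments and reorder the ZMT step as above.
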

We talk accordingly of extensions of the ''first kind'' ($j$ is an open embedding) and extensions of the ''second kind'' ($j$ contracts $\{\pt\}\times\G_a$). An obvious extension of $S_*\times\G_a$ of the first kind is of course $S\times\G_a$, but there are many others! A series of examples of smooth extensions of $\A^2_*\times\G_a$ is presented before we move on to extensions of the second kind. Note that $S_*\times\G_a$ has a natural $\G_m$-action, where $\G_m$ acts trivially on $S_*$ and as $\Aut(\G_a)$ on the fibers. We call this the vertical $\G_m$-action. Examples of affine extensions of $S_*\times\G_a$ such that the vertical $\G_m$-action extends to $\hat P$ are also given.

\begin{theorem}\label{thm:triv2} Let $\hat P\not\simeq S\times\G_a$ be an affine extension of $S_*\times\G_a$ which admits an extension of the vertical $\G_m$-action. Then $\hat P$ is of the second kind and $E= \hat P^{\G_a}$ is the set of $\G_a$-fixed points. 
On the other hand
$p_0:=j(\{\pt\}\times\G_a)$ is the unique $\G_m$-fixed point in the exceptional fiber $E$. Furthermore all the irreducible components $E_i \hookrightarrow E$ of the exceptional fiber contain $p_0$ and, for each $i$, $E_i \setminus \{p_0\}$ is a $\G_m$-fibration over a rational curve. 
\end{theorem}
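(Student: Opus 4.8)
Throughout write $A=\cO(P)$ and $B=\cO(\hat P)$. Since $S$ is normal and $\pt$ has codimension two, $\cO(S_*)=\cO(S)$, so $A=\cO(S)[y]$ with $y$ a fibre coordinate on the trivial bundle; the $\G_a$-action is $y\mapsto y+s$ and the vertical $\G_m$-action is $y\mapsto ty$. The first step is to exploit the hypothesis: an extension of the vertical $\G_m$-action makes the inclusion $B\hookrightarrow A$ a homomorphism of $\Z_{\ge0}$-graded rings, where $A=\bigoplus_{n\ge0}\cO(S)y^n$ is graded by $y$-degree. Hence $B=\bigoplus_{n\ge0}I_ny^n$ for ideals $I_n\subseteq\cO(S)$ with $I_0=\cO(S)$ and $I_mI_n\subseteq I_{m+n}$. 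The locally nilpotent derivation $D=\partial_y$ preserves $B$ and sends $I_ny^n$ into $I_{n-1}y^{n-1}$, so in characteristic zero $I_n\subseteq I_{n-1}$; and the condition $\iota(P)=\hat\pi^{-1}(S_*)$ forces $B_g=A_g$ for every $g\notin\fm_\pt$, whence each $I_n$ is either $\cO(S)$ or $\fm_\pt$-primary. Finally, since $B$ is finitely generated, a suitable Veronese subalgebra $\bigoplus_n I_{nd}y^{nd}$ is generated in degree $d$, i.e. $I_{nd}=I_d^{\,n}$ for all $n$; thus $B^{(d)}$ is the Rees algebra of the ideal $I_d$.

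Second, I prove the claims $(a)$ ``second kind'' and $(b)$ ``$E=\hat P^{\G_a}$'' together. If $I_1=\cO(S)$ then $1\in I_1$ forces $I_n=I_{n-1}$ for all $n$, so $B=A$ and $\hat P\simeq S\times\G_a$ --- excluded; hence $I_1$ is $\fm_\pt$-primary and, by the chain, $I_n\subseteq\fm_\pt$ for all $n\ge1$. Consequently, for $b=\sum f_ny^n\in B$ we have $f_n(\pt)=0$ when $n\ge1$, so the restriction of $j$ to $\{\pt\}\times\G_a$ is the constant $f_0(\pt)$: the map $j$ contracts $\{\pt\}\times\G_a$ to a point, and by Theorem~\ref{thm:2cases} $\hat P$ is of the second kind, with $p_0$ well defined. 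For $(b)$, the $\G_a$-action is free on $P$, so $\hat P^{\G_a}\subseteq E$; conversely $\hat P^{\G_a}=V(J)$ with $J$ the ideal generated by $D(B)=\bigoplus_{n\ge0}I_{n+1}y^n$, and it suffices to show $J\subseteq I(E)=\sqrt{\fm_\pt B}$. The degree-zero generator $I_1$ lies in $\fm_\pt\subseteq I(E)$, and for $n\ge1$ the Rees property gives $(fy^n)^d=f^dy^{nd}$ with $f^d\in I_{n+1}^d\subseteq I_{(n+1)d}=I_d^{\,n+1}\subseteq\fm_\pt I_d^{\,n}=\fm_\pt I_{nd}$, so $(fy^n)^d\in\fm_\pt B$. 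Hence $J\subseteq I(E)$; since also $\fm_\pt=\sqrt{I_1}\subseteq\sqrt J$ gives $I(E)\subseteq\sqrt J$, we get $\sqrt J=I(E)$ and $\hat P^{\G_a}=E$.

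Third, for $(c)$ and $(d)$ I pass to the fibre cone $R:=(B/\fm_\pt B)_{\mathrm{red}}$, a reduced finitely generated $\Z_{\ge0}$-graded $\C$-algebra with $R_0=\cO(S)/\fm_\pt=\C$ and $E_{\mathrm{red}}=\Spec R$. A closed point of $E$ is $\G_m$-fixed precisely when its maximal ideal is homogeneous; since $R$ is nonnegatively graded with $R_0=\C$, the unique homogeneous maximal ideal is the irrelevant ideal $R_+$, which corresponds to the vertex $p_0$ (whose ideal $\fm_\pt\oplus\bigoplus_{n\ge1}I_ny^n$ is homogeneous). This is $(c)$. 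Each minimal prime $\fp_i$ of $R$ is homogeneous and proper, so $\fp_i\cap R_0=0$ and therefore $\fp_i\subseteq R_+$; hence $p_0=V(R_+)$ lies on every component $E_i=V(\fp_i)$, giving $(d)$. Finally $E_i\setminus\{p_0\}=\Spec(R/\fp_i)\setminus\{\text{vertex}\}$ is the punctured affine cone over the projective curve $C_i:=\Proj(R/\fp_i)$, and the quotient by $\G_m$ realises it as a $\G_m$-fibration over $C_i$.

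It remains to prove the rationality of $C_i$, which I regard as the crux. The plan is to identify $\bigsqcup_iC_i$ with the reduced exceptional fibre of a blow-up at the regular point $\pt$. Indeed $\Proj R=\Proj R^{(d)}$, and since $B^{(d)}$ is the Rees algebra of $I_d$, this $\Proj$ is the reduced fibre over $\pt$ of $\Proj B^{(d)}=\Bl_{I_d}S\to S$; thus the $C_i$ are exactly the components of the exceptional fibre of the blow-up of the $\fm_\pt$-primary ideal $I_d$ at the regular surface point $\pt$. Here regularity of $\pt$ is decisive: such a blow-up is dominated by a finite sequence of point blow-ups of smooth points, each contributing a $\PP^1$, so every component of its exceptional fibre over $\pt$ is the image of a $\PP^1$ and hence rational. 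The step I expect to require the most care is making this last reduction precise --- controlling the fibre cone of the graded family $\{I_n\}$ through the single ideal $I_d$, and invoking the structure of complete $\fm_\pt$-primary ideals in the two-dimensional regular local ring $\cO_{S,\pt}$ (Zariski--Lipman theory) to guarantee that all exceptional components are rational.
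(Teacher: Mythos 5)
Your proof is correct and follows the paper's overall skeleton: the graded decomposition $B=\cO(S)\oplus\bigoplus_{n\ge 1} I_n y^n$ with decreasing $\fm_\pt$-primary ideals (the paper's Lemma~\ref{lem: gradedsubB}), the contraction criterion giving the second kind (Corollary~\ref{cor:thm2firstline}), and rationality of the base curves via domination by a composition of point blowups of the regular point $\pt$ — the paper resolves $\Proj(B)$ directly and factors $Z\to S$ into point blowups, while you first pass to a Veronese subalgebra so as to identify $\Proj B^{(d)}$ with the blowup of the single ideal $I_d$; this amounts to the same argument. The one genuinely different step is the proof that $E=\hat P^{\G_a}$: the paper argues geometrically, extending the $\G_a\rtimes_\sigma\G_m$-action to $\hat P$, bounding orbit dimensions by semicontinuity from the dense open $P$, and deriving a contradiction from the disjointness of the (infinitely many) would-be nontrivial $\G_a$-orbits in the purely two-dimensional $E$; you instead compute the fixed locus algebraically as $V(J)$ with $J=(D(B))$ and show $\sqrt{J}=\sqrt{\fm_\pt B}$ using the Rees property $I_{nd}=I_d^{\,n}$ of the Veronese. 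Your route is self-contained and has the added benefit of proving explicitly, via the fibre cone $R=(B/\fm_\pt B)_{\mathrm{red}}$, that $p_0$ is the unique $\G_m$-fixed point of $E$ and that every component $E_i$ contains $p_0$ — statements the paper's proof leaves largely implicit in the identification of $\Proj(B)$ with the space of nontrivial $\G_m$-orbits (and which its own orbit argument actually relies on). One slip to fix: the localization condition should read $B_g=A_g$ for every $g\in\fm_\pt\setminus\{0\}$ (i.e.\ $g(\pt)=0$, so that $S_g\subset S_*$), not for $g\notin\fm_\pt$; with the condition stated correctly, your conclusion that each $I_n$ is either the unit ideal or supported at $\pt$ follows exactly as you say.
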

A $\G_m$-fibration in this context is an affine morphism $q\colon X\to Y$, where $X$ is a $\G_m$-variety, such that the fibers are the $\G_m$-orbits and such that $q^{-1}(V)\quot\G_m\simeq V$ for each affine open subset $V\subset Y$.\par
The ''most basic'' nontrivial principal $\G_a$-bundle over $\A^2_*$ is $\SL_2\to\A^2_*,\quad A=(a_{ij})\mapsto(a_{11},a_{21})$. Recall that $\G_a$ embeds in $\SL_2$ as the upper-triangular unipotent matrices; the action of $\G_a$ on $\SL_2$ is given by right multiplication. Now, if $P \to S_*$ is any nontrivial principal $\G_a$-bundle, we show in section~\ref{sec:SL2} that it is possible to find a punctured neighbourhood $U_*= U \setminus \{\pt \} \subset S$ of $\pt$ together with a morphism  $\varphi =(g,h)\colon U_* \to \A^2_*$, such that $P|_{U_*}  =\varphi^* (\SL_2)$. We also show that an affine extension of $\SL_2$ always induces an extension $\tilde P \to U$ of
$\pi^{-1}(U_*) \to U_*$, which patches together with $P \to S_*$ to an affine extension $\hat P \to S$. This is used as motivation for restricting our attention to extensions of $\SL_2$ for the rest of the article.\par

The locally nilpotent derivation $D\colon B\to B$ which corresponds to the $\G_a$-action on the affine variety $\Spec(B)$ can be used to define a graded algebra $\gr_D(B)$, corresponding to the filtration given by $B_{\leq\nu}:=\ker D^{\nu+1}\subset B$. This is done in section~\ref{sec:gradedalgebra}, where we associate to an affine extension $\hat P=\Spec(B)$ its graded algebra $\gr_D(B)$.
Proposition~\ref{prop:grad} says that these graded algebras are given by a certain sequence of ideals $\{\fm_\nu(B)\}_{n\in\N}$ in $\cO(S)$, and it turns out that they are all of the kind that appears as the algebra of an extension of the trivial bundle $S_*\times\G_a$ with extending vertical $\G_m$-action. We also show that $\gr_D(B)$ uniquely determines $B$ if it is generated in degree 1. Finally we formulate Theorem~\ref{Th: MT}, which could be said to be the main theorem of this work. It gives two families of graded subalgebras of $\gr_D(\cO(\SL_2))$ that actually occur as graded algebras of affine extensions of $\SL_2\to\A^2_*$. The construction of these two families is the topic of sections~\ref{sec:family1} and~\ref{sec:family2}.
The family $\hat P_n$ in section~\ref{sec:family1} is indexed by a positive integer $n$, while the family $\hat P(p,q)$ in section~\ref{sec:family2} is indexed by two positive integers $p,q$ which are relatively prime (the first construction also works for $n=0$, but $\hat P_0\simeq\hat P(1,1)$ is listed in the second family instead). Both families are constructed by realizing $\SL_2$ as a fiber bundle over some base and then enlarging the fiber -- in the first case we also need to take the affinization of the obtained variety in order to get an affine extension; this corresponds to contracting a rational curve to a point.
In section~\ref{sec:family1}, the base is $\p^1$ and the fiber a Borel subgroup of $\SL_2$, whereas in section~\ref{sec:family2}, the base is a Danielewski surface and the fiber $\G_m$. In the first family of extensions, the associated graded algebra is generated by its elements of degree 1, and thus uniquely determines the extension. In fact, any other $\SL_2$-extension is a $\G_a$-equivariant modification of one of the $\hat P_n$ in the following sense.

\begin{theorem}
\label{equimod} 
For any affine extension $\hat P$ of $P=\SL_2$, there exists a $\G_a$-equivariant birational morphism $\hat P\longto\hat P_n$ for some $n\in \N$.
\end{theorem}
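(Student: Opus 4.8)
The plan is to translate the sought morphism into an inclusion of coordinate rings and then exploit the fact, recorded just before the statement, that $\gr_D(\cO(\hat P_n))$ is generated in degree $1$. Since $\hat P$ and $\hat P_n$ are both affine extensions of the same bundle $P=\SL_2\to\A^2_*$, they share the function field $\k(\SL_2)$ and agree over $\A^2_*$; writing $A=\cO(\SL_2)$ and $B=\cO(\hat P)\subseteq A$, a $\G_a$-equivariant birational morphism $\hat P\to\hat P_n$ over $S$ restricting to the identity on $P$ is the same datum as an inclusion of $D$-stable subalgebras $\cO(\hat P_n)\subseteq B$ inside $A$, both containing $\cO(S)=\ker D$. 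Such an inclusion of finitely generated algebras with common fraction field induces a dominant, hence birational, morphism $\Spec B\to\Spec\cO(\hat P_n)$, which is equivariant because the inclusion commutes with $D$ and compatible with $\hat\pi$ because it is the identity on $\cO(S)$. So the entire problem becomes: given $B$, produce an index $n$ with $\cO(\hat P_n)\subseteq B$.

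I would first reduce this containment to degree $1$. Because $\gr_D(\cO(\hat P_n))$ is generated in degree $1$ over its degree-$0$ part $\cO(S)$, a standard filtered-to-graded induction shows that $\cO(\hat P_n)$ is generated as an $\cO(S)$-algebra by its slice $\ker D^2\cap\cO(\hat P_n)$; hence it suffices to check $\ker D^2\cap\cO(\hat P_n)\subseteq B$. For $f$ in this slice, $Df$ lies in $\ker D=\cO(S)$, and by the description of $\gr_D$ in Proposition~\ref{prop:grad} in fact $Df\in\fm_1(\hat P_n)\subseteq\cO(S)$. If I can arrange $\fm_1(\hat P_n)\subseteq\fm_1(B)=D(\ker D^2\cap B)$, then there is $f'\in\ker D^2\cap B$ with $Df'=Df$, so that $f-f'\in\ker D=\cO(S)\subseteq B$ and therefore $f=f'+(f-f')\in B$. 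Thus everything comes down to the single ideal containment $\fm_1(\hat P_n)\subseteq\fm_1(B)$ in $\cO(S)$.

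Next I would show that $\fm_1(B)$ is $\fm_\pt$-primary. The functions $a_{12},a_{22}\in A$, with $Da_{12}=a_{11}$ and $Da_{22}=a_{21}$, are regular on $\hat P\setminus E=P$, and $E=\hat\pi^{-1}(\pt)$ is a divisor in the normal threefold $\hat P$ all of whose components lie over $\pt$; since $\hat\pi^*\fm_\pt$ cuts out $E$, multiplication by a sufficiently high power annihilates the poles of $a_{12},a_{22}$ along $E$, so $\fm_\pt^k a_{12}\subseteq B$ and $\fm_\pt^k a_{22}\subseteq B$ for some $k$. Applying $D$ and using $a_{11},a_{21}\in\ker D$ yields $\fm_\pt^k a_{11},\fm_\pt^k a_{21}\subseteq\fm_1(B)$, that is $\fm_\pt^{k+1}\subseteq\fm_1(B)$ since $\fm_\pt=(a_{11},a_{21})$.

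Finally I would invoke the explicit description of the first family from Section~\ref{sec:family1}: there $\gr_D(\cO(\hat P_n))$ is the Rees-type algebra $\bigoplus_\nu\fm_1(\hat P_n)^\nu t^\nu$, and the ideals $\fm_1(\hat P_n)$ form a sequence of $\fm_\pt$-primary ideals cofinal in the powers of $\fm_\pt$, so that for every $m$ there is an $n$ with $\fm_1(\hat P_n)\subseteq\fm_\pt^m$. Choosing $n$ with $\fm_1(\hat P_n)\subseteq\fm_\pt^{k+1}\subseteq\fm_1(B)$ then finishes the argument. I expect the main obstacle to be exactly this last matching: one must read off the degree-$1$ ideals $\fm_1(\hat P_n)$ from the fibre-enlarging-and-affinizing construction and verify their cofinality, and one must confirm that the reconstruction principle ``$\gr_D$ generated in degree $1$ determines $B$'' is being applied to $\hat P_n$ in precisely the filtered form used in the reduction. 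The primariness step is robust, but the bookkeeping that pins down the correct $n$ carries the real content.
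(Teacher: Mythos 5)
Your proposal is correct and follows essentially the same route as the paper: choose $n$ so that $\fm_1(\hat P_n)=\langle x,y\rangle^{n+2}\subseteq\fm_1(B)$, deduce the inclusion in degree $\le 1$, and then conclude $\cO(\hat P_n)\subseteq B$ from the fact that $\cO(\hat P_n)$ is generated over $\C[x,y]$ by its elements of degree $\le 1$. The only difference is that you re-derive the needed cofinality $\fm_\pt^{k+1}\subseteq\fm_1(B)$ directly by bounding pole orders of $a_{12},a_{22}$ along the divisor $E$, whereas the paper gets it from Lemma~\ref{primary} (via the localizations $B_f=A_f$); both arguments are sound.
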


In the two constructed families of affine extensions of $\SL_2$, the exceptional fiber consists of $\G_a$-fixed points only. In the following section~\ref{sec:otherext}, we obtain further extensions with a free action of $\G_a$ on the exceptional fiber, as well as with a $1$-dimensional fixed point set (the case of isolated fixed points for a $\G_a$-action on an affine variety is not possible).

\section{Extensions of the trivial $\G_a$-bundle}\label{sec:trivial}
Let $P:=S_*\times\G_a$ be the trivial $\G_a$-bundle, let $A:=\cO(S_*)[t]$ be its algebra of regular functions, and let $\hat P$ be an affine extension. Using the canonical isomorphism $\cO(S_*\times\G_a)\simeq\cO(S\times\G_a)$, we can define a morphism $j\colon S\times\G_a\to\hat P$ by the condition $j^*=\iota^*\colon\cO(\hat P)\to\cO(S)[t]$. Note that $j\colon S\times\G_a\to\hat P$ extends $\iota\colon S_*\times\G_a\to\hat P$, and that $B:=\cO(\hat P)\subset A$ is a subalgebra (via $\iota^*$).\par
The algebraic way of formulating the hypotheses that the $\G_a$-action on $S_*\times\G_a$ extends to $\hat P$ and that the morphism $\hat P\to S$ is locally trivial over $S_*$, is to say that the algebra $B$ is invariant with respect to the locally nilpotent derivation $D_t:= \frac{\partial}{\partial t}\colon A \to A$ which corresponds to the $\G_a$-action on $P$, that $\cO(S)\subset B$, and finally that $B_f=A_f$ holds for the localizations with respect to any $f\in\cO(S)$ with $f(\pt)=0$.

\begin{proof}[Proof of Theorem~$\ref{thm:2cases}$] The morphism $j\colon S\times\G_a\to\hat P$ is equivariant since $\iota$ is, and it follows that the restriction $j|_{\{\pt\} \times \G_a}$ is either injective or constant with image $p_0$ for some $p_0 \in \hat P$.\par
Suppose first that it is injective. Then $j\colon S\times\G_a\longto\hat P$ is a birational morphism with finite fibers, and since $\hat P$ is normal, it follows by Zariski's Main Theorem that $j \colon S \times \G_a \to \hat P$ is an open embedding. Furthermore, $E_2 = \hat P \setminus j (S \times \G_a)$ is purely two dimensional, being the complement of an affine open set.\par

If $j(\{\pt\} \times \G_a)=\{p_0\}$, the point $p_0\in\hat P$ is a singularity. Otherwise we could take a non-vanishing three form $\omega$ on some neighbourhood $V$ of $p_0$; its pullback $j^*(\omega)$ would be a three form on the smooth threefold $j^{-1}(V)$, with zero set $j^{-1}(V)\cap (\{\pt\}\times\G_a)$, but this is impossible for dimension reasons. For the last statement, we denote by $E_{(i)}$ the union of the $i$-dimensional irreducible components of $E$, so that $E=E_{(1)}\cup E_{(2)}$. Since $\hat P$ is normal and of dimension 3, we have $\cO(\hat P\setminus E_{(2)})\simeq\cO(\hat P\setminus E)=\cO(S\times\G_a)$, and hence we get a factorization $\hat P\setminus E_{(2)}\to S\times\G_a\to\hat P$ of the inclusion $\hat P\setminus E_{(2)}\hookrightarrow \hat P$, where the second of the maps is $j\colon S\times\G_a\to\hat P$. But since $j\colon S\times\G_a$ contracts $\{\pt\}\times\G_a$ to a point, it follows that $E_{(1)}=\emptyset$, and $E=E_{(2)}$.
\end{proof}

It follows in particular from Theorem~\ref{thm:2cases} that smooth extensions of $\A^2_*\times\G_a$ are of the first kind. An obvious example is $\hat P=\A^2\times\G_a$, but there are many others, as shown by the following construction. Denote by $\underline\A^1$ the affine line with two origins, i.e. the prevariety obtained by gluing $X_1=X_2=\A^1$ along $V_1=V_2=\A^1_*$ via the identity morphism $V_1\to V_2$, and consider a line bundle $\varphi\colon L\longto\underline{\A}^1$ with trivializations $L_i\simeq X_i\times\A^1$ over $X_i$ for $i=1,2$
and transition function $L_1 \to L_2, (x,y)\mapsto(x,x^ny)$ for some 
$n\in\N_{>0}$.
Let $Q \to \underline{\A}^1$ be any affine nontrivial principal $\G_a$-bundle (in fact, every nontrivial $\G_a$-bundle over $\underline{\A}^1$ is affine~\cite[Prop.1.4]{Fie94}) and let $\hat P:= \varphi^*(Q)$ be its pullback with respect to $\varphi$; then $\hat P$ is an affine $\G_a$-variety since $Q$ is, and the natural morphism $\hat P \to Q$ is affine. The principal $\G_a$-bundle $\hat P\to L$ has trivializations over the affine subsets $\varphi^{-1}(X_i)=L_i$, for $i=1,2$, and the trivial principal $\G_a$-bundle is embedded into $\hat P$ via the canonical embedding $\A^2_*\times\G_a\hookrightarrow L_2\times\G_a=\A^2\times\G_a$. The quotient morphism is $\hat P \to L \to \A^2$, where the second arrow is the identity on $L_2\subset L$ and $(x,y)\mapsto(x,x^ny)$ on $L_1\subset L$.

\begin{example}\label{smoothext} We determine $\hat P$ explicitly in a special case. Let $Q_i=X_i\times\G_a$, for $i=1,2$, and let $Q$ be the principal $\G_a$-bundle which is obtained by gluing $Q_1$ and $Q_2$ along $V_1\times\G_a$ and $V_2\times\G_a$ via the morphism $V_1\times\G_a\to V_2\times\G_a,\,(x,t)\mapsto(x,t+\tfrac 1x)$, and let $L$ be the line bundle given by the transition function $V_1\times\A^1\to V_2\times\A^1,\,(x,y)\mapsto(x,xy)$. Then $\hat P$ is obtained by gluing  $L_1\times\G_a$ and $L_2\times\G_a$ along $U_1\times\G_a$ and $U_2\times\G_a$ via the morphism $U_1\times\G_a\to U_2\times\G_a,\,(x,y,t)\mapsto(x,xy,t+\tfrac 1x)$ with $U_i:=\A^1_*\times\A^1\subset L_i$.\par We define a morphism $\eta\colon\hat P\to\A^5$ by 
\begin{eqnarray*}
\eta\colon(x,y,t)&\mapsto&\begin{cases}
(x,xy,xt+1,xyt+y,xt^2+t)&\textrm{ if } (x,y,t)\in L_1\times\G_a\\
(x,y,xt,yt, xt^2-t)&\textrm{ if } (x,y,t)\in L_2\times\G_a.
\end{cases}
\end{eqnarray*}
This is in fact a closed immersion whose image is the irreducible smooth subvariety $Z\hookrightarrow\A^5$ that is given by the three equations $ T_1T_4-T_2T_3=T_2T_5+T_4-T_3T_4=T_1T_5-T_3^2+T_3=0$. The inverse morphism $\eta^{-1}\colon Z\to \hat P$ is given by
\begin{eqnarray*}
\eta^{-1}\colon(a,b,c,d,e)&\mapsto&\begin{cases}
(a,\frac dc,\frac ec)\in L_1\times\G_a&\textrm{ if } c\neq 0\\
(a,b,\frac e{c-1})\in L_2\times\G_a&\textrm{ if } c\neq 1.
\end{cases}
\end{eqnarray*}
It follows that in this example we have $B=\C[x,y,xt,yt, xt^2-t]\subset\C[x,y,t]$.
Note that the vertical $\G_m$-action defined on $\A^2_*\times\G_a\subset L_2\times\G_a$ does extend to $L_2\times\G_a$, but not to all of $\hat P$.
\end{example}

We now pass on to the second kind of extensions of the trivial $\G_a$-bundle.
\begin{lemma}\label{Lemm:constonE}
An affine extension $\hat P$ of $S_*\times\G_a$ is of the second kind if and only if $B$ is a subalgebra of $\cO(S) \oplus \bigoplus_{\nu=1}^\infty \fm_\pt t^\nu.$
\end{lemma}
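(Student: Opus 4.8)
The plan is to unwind the definition of ``second kind'' into the stated coefficient condition. I recall that $j\colon S\times\G_a\to\hat P$ is the morphism determined by the ring homomorphism $j^*=\iota^*\colon B\hookrightarrow A=\cO(S)[t]$, where I use the identification $\cO(S_*)=\cO(S)$ (valid because $S$ is normal and $\pt$ has codimension two). By the proof of Theorem~\ref{thm:2cases} the orbit $\{\pt\}\times\G_a$ maps under $j$ either injectively (first kind) or to a single point (second kind); so $\hat P$ is of the second kind exactly when $j|_{\{\pt\}\times\G_a}$ is constant, and it remains to reformulate this last condition algebraically.

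Since $\hat P$ is affine, I would describe $j|_{\{\pt\}\times\G_a}\colon\{\pt\}\times\G_a\to\hat P$ by its pullback on functions, namely the composite
\[
B\xrightarrow{\ \iota^*\ }\cO(S)[t]\xrightarrow{\ \mathrm{ev}_\pt\ }\k[t],
\]
where $\mathrm{ev}_\pt$ evaluates the $\cO(S)$-coefficients at $\pt$. Writing $\iota^*(b)=\sum_{\nu\geq0}a_\nu t^\nu$ with $a_\nu\in\cO(S)$, this composite sends $b$ to $\sum_{\nu\geq0}a_\nu(\pt)\,t^\nu$. A morphism from $\{\pt\}\times\G_a\simeq\A^1$ to an affine variety is constant precisely when its coordinate pullback lands in the constants $\k\subset\k[t]$; hence $j|_{\{\pt\}\times\G_a}$ is constant if and only if $\sum_\nu a_\nu(\pt)t^\nu\in\k$ for every $b\in B$, that is, $a_\nu(\pt)=0$ for all $\nu\geq1$ and all $b\in B$.

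Finally I observe that $a_\nu(\pt)=0$ is exactly the statement $a_\nu\in\fm_\pt$, so the condition just obtained reads: every $b\in B$ lies in $\cO(S)\oplus\bigoplus_{\nu\geq1}\fm_\pt t^\nu$. This set is indeed a subalgebra of $A$, since $\fm_\pt$ is an ideal of $\cO(S)$, so the phrase ``$B$ is a subalgebra of $\cO(S)\oplus\bigoplus_{\nu\geq1}\fm_\pt t^\nu$'' reduces to the inclusion $B\subset\cO(S)\oplus\bigoplus_{\nu\geq1}\fm_\pt t^\nu$. Chaining the three equivalences then yields the lemma. The only point that needs genuine care is the middle paragraph, where ``$j$ contracts $\{\pt\}\times\G_a$ to a point'' is matched with the vanishing at $\pt$ of all higher coefficients; this rests on the ``injective-or-constant'' dichotomy of Theorem~\ref{thm:2cases} together with the fact that a morphism into the affine variety $\hat P$ is determined by its pullback on regular functions. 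The rest is formal.
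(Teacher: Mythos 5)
Your proof is correct and follows essentially the same route as the paper: both arguments identify the subalgebra $\cO(S)\oplus\bigoplus_{\nu\ge1}\fm_\pt t^\nu$ with the functions on $S\times\G_a$ that are constant along $\{\pt\}\times\G_a$, and both use the fact that regular functions on the affine variety $\hat P$ separate points (equivalently, that a morphism of $\A^1$ into an affine variety is constant iff every pullback is constant) to pass between ``$j$ contracts the orbit'' and ``all $j^*(f)$ are constant on the orbit.'' Your version merely makes the evaluation map $\mathrm{ev}_\pt$ and the coefficientwise reading of the condition explicit.
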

\begin{proof}
Note that the (non finitely generated) algebra $\cO(S) \oplus \bigoplus_{\nu=1}^\infty \fm_\pt t^\nu\subset\cO(S)[t]$ consists exactly of those functions on $S\times\G_a$ that are constant along $\{\pt\}\times\G_a$. Suppose that $j(\{\pt\}\times\G_a)=\{p_0\}$ for some $p_0\in\hat P$. Then $j^*(f)(\{\pt\}\times\G_a)=\{f(p_0)\}$ for all $f\in\cO(\hat P)$, so $B\subset\cO(S) \oplus \bigoplus_{\nu=1}^\infty \fm_\pt t^\nu$. Conversely, if $j^*(f)$ is constant along $\{\pt\}\times\G_a$ for all $f\in\cO(\hat P)$, it follows that $j$ contracts $\{\pt\}\times\G_a$ since $\cO(\hat P)$ separates points in $\hat P$.
\end{proof}

\begin{lemma}\label{lem: gradedsubB}
Let $\hat P \not\simeq S\times\G_a$ be an affine extension of $S_*\times\G_a$ for which the vertical $\G_m$-action extends. Then $B=\cO(S) \oplus \bigoplus_{\nu=1}^\infty \fm_\nu t^\nu$, with 
a decreasing sequence of $\fm_{\pt}$-primary ideals $\fm_\nu \subset\cO(S)$.
\end{lemma}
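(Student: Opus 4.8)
The plan is to read off the three asserted properties — the direct-sum shape, the descending chain, and the $\fm_\pt$-primarity — from the three structures available on $B\subset A=\cO(S)[t]$: the grading coming from the vertical $\G_m$-action, invariance under $D_t=\partial/\partial t$, and the local-triviality condition $B_f=A_f$ for $f\in\fm_\pt$. First I would fix the grading. The vertical $\G_m$-action on $A$ is the standard one with $\deg t=1$ and $\cO(S)$ in degree $0$, so $A=\bigoplus_{\nu\geq 0}\cO(S)\,t^\nu$. To say that this action extends to $\hat P$ is exactly to say that $B$ is a $\G_m$-stable, hence graded, subalgebra, so $B=\bigoplus_{\nu\geq 0}B_\nu$ with $B_\nu=B\cap\cO(S)t^\nu$. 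Writing $B_\nu=\fm_\nu t^\nu$ defines an $\cO(S)$-submodule $\fm_\nu\subset\cO(S)$; since $\cO(S)\subset B$ lies in degree $0$ and $B$ is a ring, multiplication by $\cO(S)$ preserves each $B_\nu$, so every $\fm_\nu$ is an ideal and $\fm_0=\cO(S)$. This already yields the shape $B=\cO(S)\oplus\bigoplus_{\nu\geq 1}\fm_\nu t^\nu$.

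Next I would extract the two ideal-theoretic properties. For the descending chain I use $D_t$-invariance: applying $D_t$ to $g\,t^\nu\in B_\nu$ gives $\nu\,g\,t^{\nu-1}\in B_{\nu-1}$, and since we are in characteristic zero $\nu$ is invertible, so $g\in\fm_{\nu-1}$; thus $\fm_\nu\subseteq\fm_{\nu-1}$ and the chain $\fm_1\supseteq\fm_2\supseteq\cdots$ is decreasing. For a lower bound on the radicals I use the local-triviality condition: for every $f\in\fm_\pt$ we have $t^\nu\in A_f=B_f$, hence $f^k t^\nu\in B$ for some $k$ (using that $A$ is a domain), and taking the degree-$\nu$ component shows $f^k\in\fm_\nu$. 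Therefore $\fm_\pt\subseteq\sqrt{\fm_\nu}$ for every $\nu\geq 1$; in particular each $\fm_\nu$ is nonzero.

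It then remains to pin the radicals down exactly, and this is where the nontriviality hypothesis enters. If $\fm_1=\cO(S)$ then $t=1\cdot t\in B$, forcing $B=A$ and $\hat P\simeq S\times\G_a$, contrary to assumption; hence $\fm_1\subsetneq\cO(S)$. Since $\pt$ is regular, $\fm_\pt$ is maximal, and from $\fm_\pt\subseteq\sqrt{\fm_1}\subsetneq\cO(S)$ the radical $\sqrt{\fm_1}$ can only be $\fm_\pt$; so $\fm_1$ is $\fm_\pt$-primary and in particular $\fm_1\subseteq\fm_\pt$. Combining with the descending chain gives $\fm_\nu\subseteq\fm_1\subseteq\fm_\pt$ for all $\nu\geq 1$, whence $\sqrt{\fm_\nu}=\fm_\pt$ and every $\fm_\nu$ is $\fm_\pt$-primary, as claimed. (Incidentally, the inclusion $\fm_\nu\subseteq\fm_\pt$ recovers, via Lemma~\ref{Lemm:constonE}, that such an extension is necessarily of the second kind.)

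I expect the only genuinely delicate point to be the final step: one needs \emph{both} inclusions $\fm_\pt\subseteq\sqrt{\fm_\nu}$, from local triviality, and $\sqrt{\fm_\nu}\subseteq\fm_\pt$, from the descending chain together with the nontriviality input $\fm_1\subsetneq\cO(S)$, in order to conclude that the radical is exactly the maximal ideal $\fm_\pt$. Everything preceding it is formal bookkeeping once the $\G_m$-grading has been installed, so the crux is simply making sure the trivial-extension case is the unique obstruction to $\fm_1$ being proper.
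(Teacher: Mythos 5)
Your proof is correct and follows essentially the same route as the paper: the $\G_m$-grading gives the direct-sum decomposition, $D_t$-invariance gives the descending chain, and the localization condition $B_f=A_f$ for $f(\pt)=0$ pins the support of each $\fm_\nu$ to $\{\pt\}$. You are somewhat more explicit than the paper about how the hypothesis $\hat P\not\simeq S\times\G_a$ forces $\fm_1\subsetneq\cO(S)$ and hence $\sqrt{\fm_\nu}=\fm_\pt$ exactly, a step the paper leaves implicit in the phrase ``either $\cO(S)[t]$ or has the given form.''
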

\begin{proof} We know that $B$ is a graded subalgebra of $\cO(S)[t]$ with respect to the $t$-grading since the $\G_m$-action extends to $\hat P$, hence it is either $\cO(S)[t]$ or has the given form. The sequence $(\fm_\nu)_{\nu\in\N_{>0}}$ is decreasing since, by assumption,  $B$ is invariant with respect to the locally nilpotent derivation $D_t\colon\cO(S)[t]\to\cO(S)[t]$ which corresponds to the $\G_a$-action on $S_*\times\G_a$. Finally we know that $A_f=B_f$ for all $f\in\cO(S)$ with $f(\pt)=0$, and it follows that $\cO(S)_f\oplus\bigoplus_{\nu=1}^\infty\fm_\nu\cO(S)_ft^\nu=
B_f=A_f=\cO(S)_f\oplus\bigoplus_{\nu=1}^\infty\cO(S)_ft^\nu$. In particular $\cO(S)_f=\fm_\nu\cO(S)_f$, so the support of the $\fm_\nu$ is contained in $\pt$.
\end{proof}
\begin{corollary}\label{cor:thm2firstline}
If $\hat P\not\simeq S\times\G_a$ is an affine extension of $S_*\times\G_a$ such that the vertical $\G_m$-action extends to $\hat P$, then $\hat P$ is of the second kind. 
\end{corollary}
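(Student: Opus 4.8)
The plan is to derive this corollary directly by combining the two preceding lemmas, so essentially no new geometric input is needed. First I would invoke Lemma~\ref{lem: gradedsubB}: since $\hat P\not\simeq S\times\G_a$ and the vertical $\G_m$-action extends, the algebra $B$ cannot be all of $\cO(S)[t]$, and so it takes the explicit graded form
$$
B=\cO(S) \oplus \bigoplus_{\nu=1}^\infty \fm_\nu t^\nu,
$$
with a decreasing sequence of $\fm_\pt$-primary ideals $\fm_\nu\subset\cO(S)$. The hypothesis $\hat P\not\simeq S\times\G_a$ is exactly what is required to rule out the alternative in that lemma and land in this case.

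The key elementary observation is that every $\fm_\pt$-primary ideal is contained in $\fm_\pt$: a primary ideal always lies inside its own radical, and the radical of an $\fm_\pt$-primary ideal is $\fm_\pt$ itself, so $\fm_\nu\subseteq\sqrt{\fm_\nu}=\fm_\pt$ for each $\nu\geq 1$. Feeding this into the displayed description of $B$ gives the inclusion
$$
B=\cO(S) \oplus \bigoplus_{\nu=1}^\infty \fm_\nu t^\nu\ \subseteq\ \cO(S) \oplus \bigoplus_{\nu=1}^\infty \fm_\pt t^\nu .
$$

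Finally I would apply Lemma~\ref{Lemm:constonE}, which characterizes extensions of the second kind precisely by membership of $B$ in $\cO(S)\oplus\bigoplus_{\nu\geq 1}\fm_\pt t^\nu$; the inclusion just established therefore shows that $\hat P$ is of the second kind, as claimed. I do not expect any genuine obstacle here: all the substantive work has already been carried out in Lemmas~\ref{lem: gradedsubB} and~\ref{Lemm:constonE}, and the only point requiring a moment's care is the purely commutative-algebra fact that an $\fm_\pt$-primary ideal is contained in $\fm_\pt$. Everything else is bookkeeping in the $t$-grading.
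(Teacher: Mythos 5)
Your proposal is correct and follows exactly the paper's own argument: the paper likewise derives the corollary as an immediate consequence of Lemma~\ref{Lemm:constonE} and Lemma~\ref{lem: gradedsubB}, using the same observation that each $\fm_\nu$ is contained in $\fm_\pt$ because it is $\fm_\pt$-primary. No differences worth noting.
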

\begin{proof}
This is an immediate consequence of Lemma~\ref{Lemm:constonE} and Lemma~\ref{lem: gradedsubB}, since $\fm_\nu\subset\fm_\pt$ for all $\nu\in\N_{>0}$, the ideals $\fm_\nu$ being $\fm_\pt$-primary.
\end{proof}

Suppose that $B=\cO(S)\oplus\bigoplus_{\nu=1}^\infty\fm_\nu t^\nu$ is the algebra of an affine extension $\hat P$ of $S_*\times\G_a$, as in Lemma~\ref{lem: gradedsubB}. Then we can form the projective spectrum $\Proj(B)$; indeed, it can be thought of as the set of nontrivial $\G_m$-orbits in $\hat P=\Spec(B)$.
  If $B$ is generated in degree 1,
 i.e. if $B=\cO(S)\oplus\bigoplus_{\nu=1}^\infty\fb^\nu t^\nu$ for some ideal $\fb\subset\cO(S)$, the natural map $\hat P \setminus \hat P^{\G_m} \to \Proj (B)$ is even a locally trivial $\G_m$-principal bundle and $\Proj(B)$ is just the blowup of $S$ at the ideal $\fb$.

\begin{proof}[Proof of Theorem~$\ref{thm:triv2}$] We have seen already in Corollary~\ref{cor:thm2firstline} that $j\colon S\times\G_a\to\hat P$ contracts $\{\pt\}\times\G_a$. Denote $Z \to \Proj (B)$ a resolution of the singularities. Then $Z \to S$ is a composition of blowups at regular points, hence the zero fiber has irreducible components isomorphic to $\PP^1$. Thus the irreducible components of the zero fiber of  $\Proj (B) \to S$ are dominated by $\PP^1$, hence are rational curves. 
It remains to show that $\G_a$ acts trivially on $E$.  The standard action of the affine group $\G_a \rtimes_\sigma \G_m$ on $\G_a \simeq \A^1$ yields an action on $P \simeq S_* \times \G_a$ extending to
$\hat P$. Its orbits have at most dimension 1, since that holds on $P$ and $P$ is dense in $\hat P$. 
Assume that there is a nontrivial $\G_a$-orbit $\G_a * x \hookrightarrow E$.
Then we have even $$(\G_a \rtimes_\sigma \G_m)x = \G_a * x,$$
the left hand side being irreducible and one-dimensional, hence it is the union of the singular point $p_0 \in E$ and a $\G_m$-orbit. But since $E$ is purely two dimensional, there are infinitely many such orbits -- a contradiction, since different orbits are disjoint.
\end{proof}
 
 \section{Pullbacks and Extensions of $\SL_2$}\label{sec:SL2}
Principal $\G_a$-bundles over $\A^2_*$, also studied in~\cite{DuFi11}, are classified by $H^1(\A^2_*,\cO_{\A^2_*})$. The ''most basic'' nontrivial among these is $\SL_2$, whose cocycle with respect to the open cover $\A^2_*=\A^2_x\cup\A^2_y$ is given by
$(xy)^{-1}\in H^1(\A^2_*,\cO_{\A^2_*})\simeq x^{-1}y^{-1}\C[x^{-1},y^{-1}].$
Proposition~\ref{pullback} states that every nontrivial principal $\G_a$-bundle $\pi\colon P\to S_*$ locally can be realized as a pullback of $\SL_2$ with respect to a certain morphism $\phi\colon U_*\to\A^2_*$, where $U$ is an affine open neighbourhood of $\pt$ and $U_*:=U\setminus\{\pt\}$. 
Using this representation of $P|_{U_*}\to U_*$ as a pullback of $\SL_2$ around $\pt\in S$, we obtain an affine extension $\hat\pi\colon \hat P\to S$ for every affine extension we can find for $\SL_2$; this is an direct consequence of Proposition~\ref{prop:pullbext}.

\begin{proposition}\label{pullback}
For any nontrivial principal $\G_a$-bundle $\pi\colon P \to S_*$ there is 
a neighbourhood $U$ of $\pt$ together with regular functions $g,h \in \cO (U)$ with $\pt$ as their only common zero, 
such that 
$$
P|_{U_*} \simeq \varphi^*(\SL_2)
$$ 
with the morphism $\varphi:= (g,h)\colon U_* \to \A^2_*$. 
\end{proposition}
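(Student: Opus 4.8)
The plan is to translate the statement into the cohomological classification of principal $\G_a$-bundles and then to exploit the local structure of $S$ at the regular point $\pt$. Principal $\G_a$-bundles over $S_*$ are classified by $H^1(S_*,\cO)$, and by the computation quoted at the start of Section~\ref{sec:SL2} the bundle $\SL_2\to\A^2_*$ corresponds to the class $(xy)^{-1}$; consequently, for a morphism $\varphi=(g,h)$ the pullback $\varphi^*(\SL_2)$ is classified by $\varphi^*\big((xy)^{-1}\big)=(gh)^{-1}$ with respect to the pulled-back cover. The whole problem therefore reduces to producing regular functions $g,h$ near $\pt$, having $\pt$ as their only common zero, whose product $gh$ realises the class of $P$ in the appropriate cohomology group.

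First I would pin down that group. Since $S$ is affine and $\pt$ is a regular point, the local-cohomology long exact sequence together with $H^i_{\{\pt\}}(\cO_S)=0$ for $i\le 1$ (depth two at a regular point of a normal surface) yields a canonical isomorphism $H^1(U_*,\cO)\cong H^2_{\fm_\pt}(R)=:E$ for every affine neighbourhood $U$ of $\pt$, compatibly with restriction; here $R:=\cO_{S,\pt}$ is a two-dimensional regular local ring. Thus $E$ is the top local cohomology of $R$, i.e. the injective hull of its residue field, an Artinian $R$-module with one-dimensional socle spanned by $[1/(uv)]$ for any regular system of parameters $u,v$. Because $P$ is nontrivial, its class is a nonzero element $\xi\in E$, and $P|_{U_*}$ has this same class for every $U$.

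The core of the argument is to write $\xi=[1/(gh)]$ for a suitable regular sequence. Let $I:=\mathrm{Ann}_R(\xi)$, so that $R\xi\cong R/I$ is a nonzero submodule of $E$. As $E$ is an essential extension of its simple socle, $R\xi$ shares that one-dimensional socle, so the Artinian ring $R/I$ is Gorenstein. I would then invoke the classical fact that a codimension-two Gorenstein quotient of a regular local ring is a complete intersection (Serre, via the Hilbert--Burch structure theorem), to conclude $I=(g,h)$ for a regular sequence with $\sqrt{I}=\fm_\pt$. A length count, using that $R\,[1/(gh)]\cong R/(g,h)$ and $R\xi$ both equal $(0:_E I)$, shows that $\xi$ and $[1/(gh)]$ differ by a unit of $R/I$; absorbing that unit into $g$ gives $\xi=[1/(gh)]$ exactly.

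Finally I would shrink $U$ to an affine neighbourhood on which $g,h$ are regular and $V(g)\cap V(h)=\{\pt\}$, and set $\varphi=(g,h)\colon U_*\to\A^2_*$, which indeed lands in $\A^2_*$ precisely because $\pt$ is the only common zero. The pullback cover $\varphi^{-1}(\A^2_x),\varphi^{-1}(\A^2_y)$ equals $\{g\ne0\},\{h\ne0\}$, so $\varphi^*(\SL_2)$ has Čech cocycle $1/(gh)$, whose class is $\xi$; matching classes in $H^1(U_*,\cO)$ gives $P|_{U_*}\simeq\varphi^*(\SL_2)$. The main obstacle is the middle step, namely the passage from ``$\xi$ is an arbitrary nonzero class'' to ``$\mathrm{Ann}_R(\xi)$ is a complete intersection'': this is exactly where regularity of $\pt$ (Gorensteinness of $R$) and the low dimension (codimension two, which forces Gorenstein to coincide with complete intersection) are indispensable. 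Without it one could only reach classes killed by the full maximal ideal, i.e. scalar multiples of the socle generator $[1/(uv)]$, which would be far too few.
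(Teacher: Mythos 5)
Your argument is correct, but it takes a genuinely different route from the paper's. The paper works with the rank-two locally free sheaf $\cF=\ker D^2\subset\pi_*(\cO_P)$ (the extension of $\cO_{S_*}$ by itself whose extension class is the class of $P$), trivializes it on a punctured neighbourhood of $\pt$ using Horrocks' splitting theorem for vector bundles on punctured spectra, and then reads off $g=D(f_0)$, $h=D(f_1)$ from a frame $f_0,f_1$, normalizing the determinant $gf_1-hf_0$ to $1$ so that the isomorphism with $\varphi^*(\SL_2)$ can be written down explicitly. You instead stay entirely on the cohomological side: you identify $H^1(U_*,\cO)$ with $H^2_{\fm_\pt}(\cO_{S,\pt})$, the injective hull of the residue field, and show that every nonzero class there is $[1/(gh)]$ for a regular sequence generating $\mathrm{Ann}(\xi)$, via Matlis duality and the theorem that codimension-two Gorenstein ideals in a regular local ring are complete intersections. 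Both proofs ultimately rest on the same depth-two phenomenon at the regular point $\pt$ (Horrocks' criterion is itself a local-cohomology statement), but they buy different things: the paper's version is constructive, produces the frame $f_0,f_1$ that is reused later (e.g.\ in Example~\ref{rem:ghideal}), and passes directly to the global statement of Remark~\ref{rem:global} via Quillen--Suslin; yours trades Horrocks for heavier commutative algebra but characterizes exactly which pairs $(g,h)$ occur --- generators of the Gorenstein ideal $\mathrm{Ann}(\xi)$ --- and makes transparent where regularity of $\pt$ is indispensable. Two points worth spelling out if you write this up: the restriction $H^1(S_*,\cO)\to H^1(U_*,\cO)$ is an isomorphism (both sides compute $H^2_{\fm_\pt}$ by excision), which is what guarantees that $\xi$ stays nonzero after shrinking $U$; and the compatibility between the \v{C}ech class of $1/(gh)$ on the cover by $\{g\neq0\}$ and $\{h\neq0\}$ and the element $[1/(gh)]$ of $R_{gh}/(R_g+R_h)$ under your chosen isomorphism should be stated, since the whole comparison of classes runs through it.
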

\begin{proof}
We consider the subsheaf $\cF\subset \pi_*(\cO_P)$ on $S_*$ of the direct image sheaf which is defined for an open affine subset $V\subset S_*$ by
$$
\cF(V):=\{ f \in \cO (\pi^{-1}(V)); D^2(f)=0\}.
$$
Here $D$ denotes the locally nilpotent derivation which corresponds to the $\G_a$-action on $\pi^{-1}(V)\simeq V\times\G_a\subset P$ (see also Definition~\ref{melitta}). Since $\cF$ is locally free of rank 2, \cite[Cor. 4.1.1]{Hor64}
implies that there is a neighbourhood $U$ of $\pt$
such that
$$
\cF|_{U_*} \simeq (\cO_{U_*})^2.
$$
Denote by $f_0,f_1 \in \cF(U_*)$ the sections corresponding to 
$(1,0), (0,1) \in \cO (U_*)^2$.
 When restricted to a fiber over any point in $U_*$, the functions $f_0$ and $f_1$ are linearly independent polynomials of degree 1, since $D$ is the partial derivative with respect to the fiber variable, and it follows that $(0,0)$ cannot be contained in the image of  $(g,h):=(D(f_0),D(f_1))\colon U_* \to\A^2$. 
\par
Furthermore, from $D(gf_1-hf_0)=0$ we get that $gf_1-hf_0$ is a nowhere vanishing function $e$ on $\pi^{-1}(U_*)$ which is constant on the $\pi$-fibers, hence it is the pullback of a function $e \in \cO^* (U_*)$. Replacing $f_1$ with $e^{-1}f_1$, we may assume that $gf_1-hf_0=1$. It follows that 
\begin{eqnarray*}
P|_{U_*}&\stackrel\sim\longto&\varphi^*(\SL_2)=\{(w,(u,v))\in U_*\times\A^2\,\,;\,\, g(w)v-h(w)u=1\}\\
z&\mapsto& (\pi(z),f_0(z),f_1(z))
\end{eqnarray*}
is an isomorphism. Note that the functions $g,h  \in \cO (U_*)=\cO(U)$ satisfy 
 $g(\pt)=0=h(\pt)$; otherwise $P|_{U_*}$ would be trivial as well as $P$ itself.
\end{proof}

\begin{remark}\label{rem:global}
Any nontrivial principal $\G_a$-bundle $\pi\colon P\to\A^2_*$ is isomorphic to a pullback $\phi^*(\SL_2)$ with a morphism $\phi:=(g,h)\colon\A^2_*\to\A^2_*$. For a proof of this ''global'' statement for $\G_a$-bundles on $\A^2_*$, we proceed as in the proof of Proposition~\ref{pullback}, obtain that $\cF$ extends to a locally free sheaf $\hat \cF$ on the plane and use the famous result of Quillen-Suslin \cite{TLam06} which states that locally free $\cO_{\A^2}$-modules are free. In our situation This means $\hat \cF\simeq(\cO_{\A^2})^2$.
\end{remark}

\begin{corollary}\label{cor:nontrivialaffine}
A principal $\G_a$-bundle over $S_*$ is affine if and only if it is nontrivial.
\end{corollary}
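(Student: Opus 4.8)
I would prove the equivalence by treating the two implications separately, phrasing the easy one through its contrapositive. For \emph{affine} $\Rightarrow$ \emph{nontrivial} I would show that the trivial bundle is never affine. By Hartogs' theorem for the normal surface $S$ one has $\cO(S_*)=\cO(S)$, so $\cO(S_*\times\G_a)=\cO(S)[t]=\cO(S\times\G_a)$. Hence the total space of the trivial bundle, $P=S_*\times\G_a$, is a \emph{proper} open subset of its own affinization $\Spec\cO(P)=S\times\A^1$, namely it omits $\{\pt\}\times\A^1$; and a variety that is a proper open subset of its affinization cannot be affine. This settles that an affine $\G_a$-bundle over $S_*$ must be nontrivial.

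For the substantive direction, \emph{nontrivial} $\Rightarrow$ \emph{affine}, I would start from the local description of Proposition~\ref{pullback}. It supplies an affine neighbourhood $U\ni\pt$, functions $g,h\in\cO(U)$ whose only common zero is $\pt$, and sections $f_0,f_1\in\cF(U_*)=(\ker D^2)(U_*)$ with $D(f_0)=g$, $D(f_1)=h$ and, after normalization, $g f_1-h f_0=1$; moreover $P|_{U_*}\simeq\varphi^*(\SL_2)=\{(u,(u',v))\sth g(u)v-h(u)u'=1\}$, which is closed in $U\times\A^2$ and hence affine. The one fact to retain is the single relation $g f_1-h f_0=1$: since $g(\pt)=h(\pt)=0$ it would read $0=1$ over $\pt$. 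This is the algebraic trace of nontriviality, and it will be what obstructs any limit point lying over $\pt$.

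I would then globalize as follows. The rank-$2$ locally free sheaf $\cF=\ker D^2$ on $S_*$ has coherent reflexive pushforward $\hat\cF:=\ell_*\cF$ along the open inclusion $\ell\colon S_*\hookrightarrow S$ (codimension-$2$ Hartogs for the locally free $\cF$ on the normal $S$), and $\hat\cF$ is locally free of rank $2$ near the regular point $\pt$. Choose $\cO(S)$-module generators $s_1,\dots,s_N$ of the finitely generated module $\Gamma(S,\hat\cF)=\cF(S_*)$; each $s_i$ is a regular function on $P$, affine-linear on the fibres. Forming $\Psi\colon P\to S\times\A^N$, $z\mapsto(\pi(z),s_1(z),\dots,s_N(z))$, one checks that over $S_*$ this is a closed immersion: over $U_*$ it factors the isomorphism onto $\varphi^*(\SL_2)\subset U_*\times\A^2$, and over an affine $V\subset S_*$ avoiding $\pt$ the bundle is trivial so some $s_i$ restricts to a fibre coordinate. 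Thus $\Psi(P)$ is closed in $S_*\times\A^N$. Finally, the relation $g f_1-h f_0=1$, with $f_0,f_1$ rewritten as $\cO(U)$-combinations of the $s_i$, holds on $\Psi(P)$ and persists on its closure, while its left-hand side vanishes identically on $\{\pt\}\times\A^N$; hence $\overline{\Psi(P)}$ meets no fibre over $\pt$, so $\Psi(P)$ is already closed in $S\times\A^N$ and $P\simeq\Psi(P)$ is affine.

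The main obstacle is precisely this globalization: upgrading the purely local Proposition~\ref{pullback} to a single closed embedding of all of $P$. Two points carry the weight. First, that $\ell_*\cF$ is coherent and locally free near $\pt$, which is codimension-$2$ Hartogs for sheaves on the normal surface. Second, and decisively, that the closure of $\Psi(P)$ avoids the fibre over $\pt$; this is the only place where nontriviality is used, entering through the relation $g f_1-h f_0=1$ with $g(\pt)=h(\pt)=0$. Everything happening over $S_*$ is routine principal-bundle geometry, so the entire content of the corollary is concentrated at the puncture.
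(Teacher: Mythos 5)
Your proof is correct, and it rests on the same key inputs as the paper's: for ``affine $\Rightarrow$ nontrivial'' you spell out why $S_*\times\G_a$ is not affine (the paper merely asserts this), and for the converse you invoke Proposition~\ref{pullback} together with the crucial fact $g(\pt)=h(\pt)=0$. Where you genuinely diverge is in how affineness is extracted from that fact. The paper's route is shorter: it observes that $\pi^{-1}(U_*)=\varphi^*(\SL_2)$ coincides with the fibre product $U\times_{\A^2}\SL_2$ --- precisely because the image of $\SL_2\to\A^2$ misses the origin while $(g,h)^{-1}(0)=\{\pt\}$ --- so $\pi^{-1}(U_*)$ is affine; it then concludes that $P\to S_*\hookrightarrow S$ is an affine morphism by checking an affine cover of $S$ (over affine $V\not\ni\pt$ the bundle is trivial, hence $\pi^{-1}(V)\simeq V\times\A^1$), whence $P$ is affine because $S$ is. You instead build a single global closed embedding $\Psi\colon P\to S\times\A^N$ from generators of the coherent pushforward $\ell_*\cF$ and show that the closure of the image misses $\{\pt\}\times\A^N$ via the relation $gf_1-hf_0=1$; this is the same obstruction the paper exploits, repackaged as ``$0=1$ over $\pt$'' rather than ``the fibre of $\SL_2\to\A^2$ over the origin is empty.'' Your version costs more machinery (coherence and reflexivity of $\ell_*\cF$ near the regular point $\pt$, and the verification that $\Psi$ is a closed immersion over $S_*$ --- note it is an $\cO(V)$-combination of the $s_i$, not a single $s_i$, that gives a fibre coordinate over a trivializing $V$), but it buys an explicit affine embedding of $P$. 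Both arguments are sound.
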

\begin{proof} $S_* \times \G_a$ is not affine. If $P\to S_*$ is nontrivial and $U$ affine, $\pi^{-1}(U)$ is affine as well. Indeed, it follows from Proposition~\ref{pullback} that
$$
\pi^{-1}(U)\simeq \varphi^*(\SL_2) \simeq   U \times_{\A^2} \SL_2
$$
with respect to the morphisms $\SL_2 \to \A^2_* \hookrightarrow \A^2$ and 
$(g,h)\colon U \to \A^2$. Note that for the last isomorphism it is essential that $g(\pt)=0=h(\pt)$. Thus the composite morphism $P\stackrel\pi\to S_*\hookrightarrow S$ is affine, hence $P$ itself as well.
\end{proof}
Our next result states that extensions of $\SL_2$ induce extensions of $P|_{U_*}$. Using this we also get a global extensions of $P$ by gluing.

\begin{proposition}
\label{prop:pullbext} Let $\hat \varphi=(g,h)\colon U \to \A^2$ be a morphism and $\pt \in U$ be the only common zero of $g,h \in \cO(U)$ and $P=\varphi^*(\SL_2)$ with
$\varphi:=\hat \varphi |_{U_*}$. 
If $\hat R$ is an affine extension of $\SL_2$, then the normalization of the reduction 
$\hat P$ of the pull back $ \hat\varphi^* (\hat R):= U \times_{\A^2} \hat R$ is an affine extension of $P$.
\end{proposition}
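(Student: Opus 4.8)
The plan is to check the four requirements of Definition~\ref{def:affext} directly, for the base surface $U$ with the puncture $\pt$ and with $S_*=U_*$. Write $W:=U\times_{\A^2}\hat R$ for the fibre product and let $\hat P$ denote the normalization of its reduction (more precisely, of the irreducible component meeting the open part over $U_*$; see below). First I would record the formal features that survive reduction and normalization. A fibre product of affine varieties over an affine base is affine, and both $W_{\mathrm{red}}$ and its normalization stay affine, the latter being finite over $W_{\mathrm{red}}$; by construction the normalization is normal. The group $\G_a$ acts on $\hat R$ with $\G_a$-invariant quotient $\hat R\to\A^2$, while $U\to\A^2$ is equivariant for the trivial action on $U$; hence the action lifts to $W$, acting through the second factor only, and the projection $\hat\pi\colon W\to U$ is $\G_a$-invariant. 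As $\G_a$ is connected the action descends to $W_{\mathrm{red}}$ and lifts uniquely to the normalization, so $\hat P$ is a normal affine $\G_a$-variety carrying a morphism $\hat\pi\colon\hat P\to U$.

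The heart of the argument is the behaviour over $U_*$. Since $\pt$ is the only common zero of $g$ and $h$, the morphism $\varphi$ sends $U_*$ into $\A^2_*$; and since $\hat R$ is an affine extension of $\SL_2$, its restriction over $\A^2_*$ is $\SL_2$. Therefore
\[
W\times_U U_* = U_*\times_{\A^2}\hat R \simeq U_*\times_{\A^2_*}\SL_2 = \varphi^*(\SL_2)=P.
\]
Now $P$ is a principal $\G_a$-bundle over the normal surface $U_*$, hence is itself reduced and normal. Reduction and normalization are local operations which restrict to isomorphisms over the reduced, normal locus and commute with passage to the open subset lying over $U_*$; they therefore leave $W$ unchanged there. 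Consequently $\hat\pi^{-1}(U_*)\simeq P$, and this identification provides the $\G_a$-equivariant open embedding $\iota\colon P\hookrightarrow\hat P$ with $\iota(P)=\hat\pi^{-1}(U_*)$. The square of Definition~\ref{def:affext} commutes by construction.

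It remains to guarantee that $\hat P$ is an irreducible variety in which $P$ is dense, and this is where I expect the main difficulty. Over the dense open $U_*\subset U$ the scheme $W$ restricts to the irreducible $P$, so $W$ has exactly one irreducible component $\bar P=\overline P$ meeting the locus over $U_*$, and $\dim\bar P=\dim P=3$. Every other component $C$ satisfies $\hat\pi(C)\subseteq\{\pt\}$, hence lies in the fibre $\{\pt\}\times E_0$ of $W$ over $\pt$, where $E_0$ is the fibre of $\hat R\to\A^2$ over the origin; since $\hat R\to\A^2$ is dominant from a threefold to a surface, $\dim E_0\le 2$, so $\dim C\le 2<3$. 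Taking $\hat P$ to be the normalization of $\bar P$ therefore makes $P$ open and dense and $\iota$ dominant, and gives the exceptional fibre $E:=\hat\pi^{-1}(\pt)=\hat P\setminus P$. The delicate point is precisely that $\hat\varphi$ may ramify and that $\hat R$ is in general singular along $E_0$, so $W$ need be neither reduced, nor normal, nor irreducible; one must verify that discarding the lower-dimensional components over $\pt$, reducing, and normalizing together yield a single normal affine threefold without disturbing the open identification with $P$ found above. By the second paragraph these operations are invisible over $U_*$, so the only thing to control is the fibre over $\pt$.

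Having verified that $\hat P$ is a normal affine $\G_a$-variety with a morphism $\hat\pi\colon\hat P\to U$ and a $\G_a$-equivariant dominant open embedding $\iota\colon P\hookrightarrow\hat P$ onto $\hat\pi^{-1}(U_*)$ fitting into the commuting square, the remaining assertions are automatic: by the Remark following Definition~\ref{def:affext} one then has $\hat P\quot\G_a = U$ and $\hat\pi$ is the quotient morphism. Thus $\hat P$ is an affine extension of $P$.
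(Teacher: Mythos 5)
You correctly isolate the crux: all the formal points (affineness, descent and lift of the $\G_a$-action, the identification of the pullback with $P$ over $U_*$) go through, and the only real issue is whether $P$ is dense in the pullback, i.e.\ whether $W:=U\times_{\A^2}\hat R$ can have irreducible components contained in the fibre $\{\pt\}\times\hat\pi^{-1}(0)$ over $\pt$. But at exactly this point your argument stops short. You only bound such putative components from above ($\dim C\le 2$) and then propose to discard them by redefining $\hat P$ as the normalization of the single three-dimensional component $\overline P$. That is not the statement being proved: the proposition asserts that the normalization of the reduction of the \emph{whole} pullback is an affine extension, and if extra components existed that normalization would be disconnected and $\iota$ could not be dominant. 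Moreover you yourself flag the discarding step as unverified (``one must verify that discarding the lower-dimensional components \dots yield a single normal affine threefold''), so even the modified claim is not closed. The paper settles the density question by a different device, Lemma~\ref{interior}: resolving the rational map $\overline U\dashrightarrow\p^2$ induced by $\hat\varphi$ via Noether--Castelnuovo, it shows that $0$ is an interior point of $\hat\varphi(U)$ and deduces from this that $P\subset\hat P$ is dense. Your proof never engages with that lemma or any substitute for it.

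The good news is that your own dimension count can be upgraded into a genuine substitute. The fibre product $W$ is the preimage of the diagonal $\Delta_{\A^2}\subset\A^2\times\A^2$ under $(\hat\varphi,\hat\pi)\colon U\times\hat R\to\A^2\times\A^2$, hence is cut out by two equations in the irreducible fivefold $U\times\hat R$; by Krull's height theorem every irreducible component of $W$ therefore has dimension at least $3$. Combined with your upper bound $\dim\bigl(\{\pt\}\times\hat\pi^{-1}(0)\bigr)\le 2$, this shows there are \emph{no} components over $\pt$ at all, so $W_{\mathrm{red}}=\overline P$ is irreducible and the statement as written follows. Without this step (or the paper's Lemma~\ref{interior}), the argument has a genuine gap at its central point.
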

 
 \begin{proof} An extension $\hat P$ of $P$ is defined by completing the pullback diagram for $P$ into a cartesian diagram as follows. Here $\psi:=\varphi^*(\pi)$ and $\hat\psi:=\hat\varphi^*(\hat\pi)$.\\
\begin{tabular}{rrr}\hspace{15ex}
\begin{minipage}{0.25\textwidth}
\[\xymatrix{
P\ar[r]\ar[d]_{\psi}& \SL_2\ar^\pi[d]\\
U_* \ar^{\varphi}[r] & \A^2_*}\]
\end{minipage}
&
\begin{minipage}{0.1\textwidth}
$\hookrightarrow$
\end{minipage}
&
\begin{minipage}{0.1\textwidth}
\[\xymatrix{
\hat P\ar[r]\ar[d]_{\hat\psi}& \hat R\ar[d]^{\hat\pi}\\
U \ar^{\hat\varphi}[r] & \A^2}\]
\end{minipage}
\end{tabular}\par
By Lemma~\ref{interior}, the image $\hat\varphi(U)$ contains $0\in\A^2$ as an interior point, and it follows that $P \subset \hat P$ is dense. 
\end{proof}

\begin{lemma}\label{interior}
Denote by $\hat\varphi:=(g,h)\colon U\to\A^2$ the extension of the above morphism from the proof of Proposition~\ref{pullback}. Then the image of $\hat\varphi$ contains $0 \in\A^2$ as an interior point.
\end{lemma}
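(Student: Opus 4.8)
The plan is to work locally around $\pt$ and to use that the fibre of $\hat\varphi$ over $0$ is a single point. Since $\pt$ is by hypothesis the only common zero of $g$ and $h$ on $U$, we have $\hat\varphi^{-1}(0)=\{\pt\}$ as sets, so the ideal $(g,h)\subset\cO(U)$ has radical $\fm_\pt$ and is therefore $\fm_\pt$-primary; equivalently, the scheme-theoretic fibre of $\hat\varphi$ over $0$ is zero-dimensional. Since $\dim U=2=\dim\A^2$, the openness statement we are after will, as a byproduct, also re-prove that $\hat\varphi$ is dominant.

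First I would reduce to a neighbourhood of $\pt$ on which $\hat\varphi$ is flat. Because $\pt$ is a regular point of the surface $S$, after shrinking $U$ I may assume that $\cO_{U,\pt}$ is a regular local ring of dimension $2$; the target ring $\cO_{\A^2,0}$ is regular of dimension $2$ as well, and the fibre ring $\cO_{U,\pt}/(g,h)\cO_{U,\pt}$ is Artinian by the previous paragraph. The dimension equality $2=2+0$ is precisely the hypothesis of the local criterion of flatness (miracle flatness), so $\hat\varphi$ is flat at $\pt$. Flatness being an open condition for morphisms locally of finite presentation, $\hat\varphi$ is then flat on a Zariski-open neighbourhood $W\subset U$ of $\pt$.

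The key step is to invoke that a flat morphism locally of finite presentation is universally open. Applied to $\hat\varphi|_W\colon W\to\A^2$, this shows that $\hat\varphi(W)$ is a Zariski-open subset of $\A^2$ containing $\hat\varphi(\pt)=0$. Since $\hat\varphi(W)\subseteq\hat\varphi(U)$, the origin is an interior point of the image, which is the assertion (and incidentally forces $\hat\varphi$ to be dominant).

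I expect the only real obstacle to be bookkeeping: checking that $\pt$ lies on a single two-dimensional component and is regular, that $(g,h)$ is genuinely $\fm_\pt$-primary, and citing the openness of flat morphisms in the correct generality. A parallel and perhaps more transparent analytic argument is available: near the regular point $\pt$ the surface $U$ is a complex $2$-manifold and $\hat\varphi$ is a holomorphic map to $\C^2$ with $\pt$ isolated in $\hat\varphi^{-1}(0)$; the open mapping theorem for finite holomorphic maps between equidimensional complex manifolds then carries a small neighbourhood of $\pt$ onto a neighbourhood of $0$ in the Euclidean topology, giving the interior point directly.
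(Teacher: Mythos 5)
Your proof is correct, but it takes a genuinely different route from the paper. You argue locally at $\pt$: since $g,h$ have $\pt$ as their only common zero, the ideal $(g,h)\subset\cO_{U,\pt}$ is $\fm_\pt$-primary, so the fibre of $\hat\varphi$ through $\pt$ is zero-dimensional; as $\cO_{U,\pt}$ and $\cO_{\A^2,0}$ are both regular two-dimensional local rings, miracle flatness makes $\hat\varphi$ flat at $\pt$, hence flat on a neighbourhood, and flat morphisms of finite presentation are open — so the image of that neighbourhood is an open set containing $0$. All the hypotheses you need are available ($\pt$ is a regular point of the normal surface $S$ by Definition~\ref{def:affext}, and Proposition~\ref{pullback} supplies $g(\pt)=h(\pt)=0$ with no other common zeros), so there is no gap. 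The paper instead argues by contradiction with birational geometry: if $0$ were not interior, some irreducible curve $C$ through the origin would meet $\hat\varphi(U)$ in only finitely many points; resolving the induced rational map $\overline U\dashrightarrow\p^2$ on a projective closure, the preimage of $\overline C$ under the resolved morphism would be the support of a divisor yet would have isolated points inside $U$ — impossible. Your argument is shorter, purely local, and isolates the real mechanism (quasi-finiteness at a point between equidimensional regular germs forces flatness, hence openness); it also yields the stronger conclusion that $\hat\varphi$ is open near $\pt$, not merely that $0$ is interior to the image. The paper's argument avoids citing flatness technology and stays within the elementary divisor-theoretic toolkit used elsewhere in the article, at the cost of invoking a compactification and resolution of indeterminacy. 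Your parenthetical analytic variant (openness of finite holomorphic maps between equidimensional complex manifolds) is likewise valid over $\C$.
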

\begin{proof}
If $0 \in\hat\varphi(U)$ is not an interior point, there is an irreducible curve $C\subset\A^2$ through the origin, such that $C\cap \hat \varphi(U)$ is finite. With the embedding $\A^2\hookrightarrow\p^2,\,(x,y)\mapsto[x:y:1]$, $\hat\varphi$ induces a rational map $\overline \varphi \colon \overline U\to\p^2$, where $\overline U$ is some smooth projective closure of $U$. By Noether-Castelnuovo's classical theorem, there is a blowup $\xi\colon X\to\overline U$ and a projective morphism $\eta\colon X\to\p^2$ such that the following diagram commutes. 
\[\xymatrix{
&X\ar_\xi[dl]\ar^\eta[dr]&\\
\overline U \ar@{-->}^{\hat\varphi_P}[rr]&&\p^2\\
U\ar[rr]^{\hat\varphi}\ar@{^{(}->}[u]&&\A^2\ar@{^{(}->}[u]}\]
Note that $\xi\colon X\to\overline U$ is a finite composition of blowups at points above  $\overline U \setminus U$, so in particular we may think of $U$ as a subset of $X$.
Then the inverse image $\eta^{-1}(\overline C)$ consists of $\pt \in U$ and finitely many further points in $U$ and a closed subvariety of $X \setminus U$. This is a contradiction, since $\eta^{-1}(\overline C)$ is the support of a divisor in $X$ and thus cannot have any isolated points.
\end{proof}

\begin{remark}
Unfortunately we don't know if all extensions of $P$ can be obtained in the above way, and second, if different extensions of $\SL_2$ induce different extensions of $P$. 
\end{remark}

\section{The graded algebra of an affine extension}\label{sec:gradedalgebra}
We now introduce the graded algebra, denoted $\gr_D(B)$ of an affine extension $\hat\pi\colon\hat P\to S$ of a principal $\G_a$-bundle $\pi\colon P\to S_*$. Motivated by Propositions~\ref{pullback}~and ~\ref{prop:pullbext} we will restrict our attention to affine extensions of $\SL_2$ in the remaining sections. Hence it would be enough to develop this algebraic tool for bundles over $\A^2_*$, but since it works completely analogously for the more general setting described in Definition~\ref{def:affext}, we formulate it in terms of a general punctured surface $S_*=S\setminus\{\pt\}$ instead.\par
We denote by $D$ the locally nilpotent derivation which corresponds to the structural $\G_a$-action on $P$.

\begin{definition}\label{melitta}

If $A$ is a $\C$-algebra with a locally nilpotent derivation $D\colon A \to A$, we define the $D$-filtration $(A_{\le \nu})_{\nu \in \N}$ of $A$ by $A_{\le \nu}:= \ker D^{\nu+1}$, and define the associated graded algebra $\gr_D(A)$ as
$$\gr_D (A):= \bigoplus_{\nu=0}^\infty A_{\le \nu}/  A_{\le \nu-1}.$$
The ''leading term'' $\gr(f) \in \gr_D(A)$ of $f\in A\setminus\{0\}$ is defined as
$$\gr (f):=f+A_{\leq\nu-1}\in\gr_D(A)_\nu,$$
where $\nu\in\N$ is the unique natural number such that $f\in\ker D^{\nu+1}\setminus\ker D^\nu$.

\end{definition}

\begin{remark} In our case, where $A$ is the algebra of a principal $\G_a$-bundle over $S_*$, the $A_{\leq 0}$-submodule  $A_{\le \nu} \subset A$ consists of the functions whose restriction to any fiber is a polynomial of degree $\le \nu$. In particular $A_{\leq 0}=\cO(S_*)\simeq\cO(S)$, so 
$$
\gr_D (A)\simeq \cO(S)\oplus\bigoplus_{\nu=1}^\infty A_{\le \nu}/  A_{\le \nu-1}.
$$
\end{remark}

We can always regard $\gr_D (A)$ as a subalgebra of the polynomial algebra $\cO(S)[t]$ in one indeterminate $t$ over $\cO(S)$ as follows.
\begin{proposition}
\label{prop:grad}
Let $D\colon A \to A$ be a locally nilpotent derivation of the $\C$-algebra $A$. Then the sequence of ideals $\fm_\nu$, or more precisely $\fm_\nu(A)$, defined by
$$\fm_\nu:= D^\nu (A_{\le \nu})  \hookrightarrow \cO(S)$$
is decreasing and satisfies $\fm_0=\cO(S)$, and $\fm_\nu \fm_\mu \subset \fm_{\nu+\mu}$. 
Furthermore we have
$$
\mathrm{ gr}_D(A) \simeq  \bigoplus_{\nu=0}^\infty \fm_\nu t^\nu \hookrightarrow \cO(S) [t]. 
$$
\end{proposition}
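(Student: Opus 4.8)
The statement is purely formal, so the plan is to push everything through the Leibniz rule for the locally nilpotent derivation $D$, using crucially that we work over $\C$, so that the integers $\nu!$ and $\binom{\nu+\mu}{\nu}$ are invertible. Throughout I identify $\ker D = A_{\le 0}$ with $\cO(S)$ as in the preceding remark.

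First I would record the basic inclusions. If $f \in A_{\le\nu} = \ker D^{\nu+1}$, then $D(D^\nu f)=0$, so $D^\nu(A_{\le\nu}) \subset \ker D = \cO(S)$ and $\fm_\nu$ is a well-defined subset of $\cO(S)$; taking $\nu=0$ gives $\fm_0 = \cO(S)$. To see that $\fm_\nu$ is an ideal, I use that $D^\nu(af) = a\,D^\nu f$ for $a \in \ker D$ (all Leibniz cross-terms vanish because $Da=0$), together with $af \in A_{\le\nu}$; hence $a\cdot D^\nu f = D^\nu(af) \in \fm_\nu$. For the decreasing property, given $f \in A_{\le\nu+1}$ I observe that $Df \in A_{\le\nu}$ and $D^{\nu+1}f = D^\nu(Df) \in \fm_\nu$, so $\fm_{\nu+1} \subset \fm_\nu$.

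The multiplicativity $\fm_\nu\fm_\mu \subset \fm_{\nu+\mu}$ is the first genuinely computational point. For $f \in A_{\le\nu}$ and $g \in A_{\le\mu}$, the generalized Leibniz rule $D^k(fg) = \sum_i \binom{k}{i}(D^i f)(D^{k-i}g)$ shows, for $k=\nu+\mu+1$, that every summand vanishes (a nonzero term would require both $i\le\nu$ and $k-i\le\mu$, forcing $k\le\nu+\mu$), so $fg \in A_{\le\nu+\mu}$; and for $k=\nu+\mu$ the only surviving summand is $i=\nu$, giving $D^{\nu+\mu}(fg) = \binom{\nu+\mu}{\nu}(D^\nu f)(D^\mu g)$. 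Since $\binom{\nu+\mu}{\nu}$ is a nonzero scalar and $\fm_{\nu+\mu}$ is an ideal, I can solve for $(D^\nu f)(D^\mu g)$ and conclude it lies in $\fm_{\nu+\mu}$. This also shows that $\bigoplus_\nu \fm_\nu t^\nu$ is a subalgebra of $\cO(S)[t]$.

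For the final isomorphism I would define $\Psi\colon \gr_D(A) \to \cO(S)[t]$ on the homogeneous piece of degree $\nu$ by $\gr(f) = f + A_{\le\nu-1} \mapsto \tfrac{1}{\nu!}(D^\nu f)\,t^\nu$. The map $D^\nu\colon A_{\le\nu} \to \fm_\nu$ is surjective by definition, and its kernel is $A_{\le\nu}\cap\ker D^\nu = A_{\le\nu-1}$, so $\Psi$ is a well-defined linear isomorphism onto $\fm_\nu t^\nu$ in each degree. The hard part — and really the only subtlety in the whole proposition — is that the naive leading-term map $\gr(f)\mapsto(D^\nu f)t^\nu$ is \emph{not} multiplicative: the computation above produces the spurious factor $\binom{\nu+\mu}{\nu}$. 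The factorial normalization is exactly what absorbs it, since $\tfrac{1}{(\nu+\mu)!}\binom{\nu+\mu}{\nu} = \tfrac{1}{\nu!\,\mu!}$, and with it the identity $\Psi(\gr(f)\gr(g)) = \Psi(\gr(f))\Psi(\gr(g))$ follows directly from $D^{\nu+\mu}(fg) = \binom{\nu+\mu}{\nu}(D^\nu f)(D^\mu g)$. Conceptually this factor is no accident: under the action $\exp(sD)$ the restriction of $f\in A_{\le\nu}$ to an orbit is the polynomial $\sum_k \tfrac{s^k}{k!}D^k f$, whose leading coefficient $\tfrac{1}{\nu!}D^\nu f$ is multiplicative in $f$.
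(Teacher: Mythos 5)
Your proof is correct and follows exactly the same route as the paper, whose entire proof consists of exhibiting the map $a + A_{\le \nu-1} \mapsto \frac{D^\nu a}{\nu!}\,t^\nu$ and leaving every verification implicit. Your write-up supplies precisely those omitted checks (well-definedness, the ideal and multiplicativity properties via the Leibniz rule, and the role of the factorial normalization in making the map multiplicative), all correctly.
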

\begin{proof} The isomorphism is induced by
$$
\gr_D (A)_\nu \to \fm_\nu t^\nu,\quad a + A_{\le \nu-1} \mapsto \frac{D^\nu a}{\nu !} t^\nu.
$$
\end{proof}

\begin{example}\label{rem:ghideal}
Let $S=U$ and $A=\cO(P)$ as in Proposition \ref{pullback}, and let $f_0,f_1 \in A_{\le 1}$ denote functions whose restrictions generate the vector space of polynomials of degree $\le 1$ on any fiber. Then, taking $g=D(f_0)$ and $h=D(f_1)$ we have
$$
A_{\leq\nu}=\bigoplus_{\alpha\in\N^2,\,|\alpha|=\nu}\cO(S)f^\alpha
$$
with the notation $f^\alpha = f_0^{\alpha_0}f_1^{\alpha_1}$ for $\alpha=(\alpha_0,\alpha_1)\in\N^2$ and $|\alpha|=\alpha_0+\alpha_1$. Since  $D^\nu(f^\alpha)=\alpha ! g^{\alpha_0}h^{\alpha_1}$, we obtain
$$
\mathrm{gr}_D (A)=  \bigoplus_{\nu=0}^\infty \langle g,h\rangle^\nu t^\nu \subset \cO(S)[t].
$$
\end{example}

Let us now consider an affine extension $\hat\pi\colon\hat P=\Spec(B)\to S$ of a $\G_a$-principal bundle $P\to S_*$ with $\cO(P)=A$. Since $D(B)\subset B$, we can form $\gr_D(B)$, and we get an inclusion 
$$
\gr_D(B)= \cO(S) \oplus \bigoplus_{\nu=1}^\infty \fb_\nu t^\nu \subset \cO(S) \oplus \bigoplus_{\nu=1}^\infty \fm_\nu t^\nu=\gr_D(A),
$$ 
where $\fb_\nu = \fm_\nu(B)$.\par
\begin{lemma}\label{primary}
The ideals $\fm_\nu(B), \nu >0$, of an affine extension $\hat P=\Spec(B)$ are $\fm_\pt$-primary, i.e. they are supported in $\{\pt\}\subset S$, or $\gr_D(B)=\cO (S)[t]$.
\end{lemma}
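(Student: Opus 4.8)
The plan is to reduce the statement to a claim about the support of the ideals $\fm_\nu(B)\subset\cO(S)$, which I control by localizing over the base away from $\pt$ and invoking the two defining properties of an affine extension: that $\cO(S)=A_{\le 0}=\ker D$, and that $B_f=A_f$ for every $f\in\fm_\pt$. The key simplification is that every localization occurring here is taken at a multiplicative subset $T\subset\cO(S)$, and since $T\subset\ker D$ the derivation extends by the naive rule $D(a/s)=D(a)/s$. Hence $(A_T)_{\le\nu}=(A_{\le\nu})_T$ and $\fm_\nu(A)_T=\fm_\nu(A_T)$, and likewise for $B$; in other words the formation of $\fm_\nu$ commutes with localization over $S$.

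Next I would fix a point $q\in S_*$ and compare $B$ and $A$ at the prime $\fm_q\subset\cO(S)$. Since $q\neq\pt$ there is $f\in\fm_\pt\setminus\fm_q$, and the identity $B_f=A_f$ localizes further to $B_{\fm_q}=A_{\fm_q}$; by the previous paragraph this gives $\fm_\nu(B)_{\fm_q}=\fm_\nu(A)_{\fm_q}$. Now I use that $\pi\colon P\to S_*$ is a principal $\G_a$-bundle, hence trivial over some neighbourhood of $q$: there $A_{\fm_q}\simeq\cO(S)_{\fm_q}[t]$ with $D=\partial/\partial t$, so $D^\nu(t^\nu)=\nu!$ is a unit and $\fm_\nu(A)_{\fm_q}=\cO(S)_{\fm_q}$. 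Combining, $\fm_\nu(B)_{\fm_q}=\cO(S)_{\fm_q}$, i.e. $\fm_\nu(B)\not\subset\fm_q$, for every $q\in S_*$ and every $\nu$. This is precisely the statement that $V(\fm_\nu(B))\subseteq\{\pt\}$.

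Finally I would run the dichotomy. By the support conclusion each $\fm_\nu(B)$ with $\nu>0$ is either contained in $\fm_\pt$, in which case $V(\fm_\nu(B))=\{\pt\}$ forces $\sqrt{\fm_\nu(B)}=\fm_\pt$ and hence $\fm_\nu(B)$ is $\fm_\pt$-primary (an ideal whose radical is a maximal ideal is primary), or else $\fm_\nu(B)\not\subset\fm_\pt$, so that $V(\fm_\nu(B))=\emptyset$ and therefore $\fm_\nu(B)=\cO(S)$. To rule out a mixture, suppose $\fm_{\nu_0}(B)=\cO(S)$ for some $\nu_0>0$. As the sequence is decreasing (Proposition~\ref{prop:grad}) we get $\fm_1(B)\supseteq\fm_{\nu_0}(B)=\cO(S)$, and then $\fm_\nu(B)\supseteq\fm_1(B)^\nu=\cO(S)$ for all $\nu$ by the relation $\fm_\nu\fm_\mu\subseteq\fm_{\nu+\mu}$; thus $\gr_D(B)=\cO(S)[t]$. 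Otherwise every $\fm_\nu(B)$ with $\nu>0$ is $\fm_\pt$-primary.

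The heart of the argument — and the only point needing real care — is the pair of localization claims, namely that $\fm_\nu$ commutes with localization and that $B_{\fm_q}=A_{\fm_q}$ for $q\in S_*$. Once the observation $\cO(S)=\ker D$ is used to make the behaviour of $D$ under localization transparent, both become routine, and the single geometric input is the local triviality of the bundle over $S_*$; the concluding dichotomy is then purely formal, resting on the decreasing and multiplicative structure of the $\fm_\nu$.
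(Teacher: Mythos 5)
Your proof is correct and follows essentially the same route as the paper: both arguments rest on the facts that $\fm_\nu$ commutes with localization over $\cO(S)$, that $B_f=A_f$ for $f$ vanishing at $\pt$, and that the bundle is trivial over affine opens of $S_*$ so that $\fm_\nu(A)$ becomes the unit ideal there, forcing $V(\fm_\nu(B))\subseteq\{\pt\}$. The only difference is cosmetic (you localize at primes $\fm_q$ rather than at elements $f$) plus your explicit concluding dichotomy, which the paper leaves implicit.
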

\begin{proof}
First we note that if $B=\cO(S\times\G_a)$ is the algebra of a trivial $\G_a$-bundle over a variety $X$, we have $\fm_\nu(B)=B^{\G_a}=\cO(X)$ for all $\nu$, i.e. the ideals $\fm_\nu(B)$ have empty support. It follows from the definition that the ideal sequence $\fm_\nu(B_f)$ of a localization at an element $f\in B^{\G_a}$ satisfies $\fm_\nu(B_f)=\fm_\nu(B)B_f$. Now, in our situation, $B_f=A_f$ for all $f\in\cO(S)$ with $f(\pt)=0$, and $A_f$ is indeed the algebra of a trivial $\G_a$-bundle, since $S_*\setminus V(f)$ is affine. It follows that the $\fm_\nu(B)$ can have no support outside $\pt\in S$.
\end{proof}

\begin{remark}
If $B=\cO(S)[f_1,\dots,f_r]\subset A$, it follows that
$$\cO(S)[\gr(f_1),\dots,\gr(f_r)]\subset \gr_D(B),$$
but it is not clear that the last inclusion always is an equality; actually, it is not even clear that $\gr_D(B)$ has to be finitely generated.
\end{remark}

\begin{proposition}\label{Prop:gr-uniqueness}
The algebra $B=\cO (\hat P) \subset A$ of regular functions of an affine extension $\hat P$ is uniquely determined by $\gr_D(B)\subset\gr_D(A)$ if  $\gr_D(B)$ is generated by $\gr_D(B)_1$ as a $\cO(S)$-algebra.
\end{proposition}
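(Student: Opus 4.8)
The plan is to reconstruct $B$ from $\gr_D(B)$ by showing that $B$ is generated over $\cO(S)$ by its filtration piece $B_{\le 1}=B\cap A_{\le 1}$, and that $B_{\le 1}$ is in turn read off from the single graded component $\gr_D(B)_1\subset\gr_D(A)_1$. The basic tool throughout is that $\gr_D(A)$ is an integral domain, being a subalgebra of $\cO(S)[t]$, so that $\gr$ is multiplicative: for $a\in A_{\le n}\setminus A_{\le n-1}$ and $a'\in A_{\le m}\setminus A_{\le m-1}$, the only surviving term in the Leibniz expansion is $D^{n+m}(aa')=\binom{n+m}{n}(D^na)(D^ma')$, which is nonzero since $\cO(S)$ is a domain; hence $aa'\in A_{\le n+m}\setminus A_{\le n+m-1}$ and $\gr(aa')=\gr(a)\gr(a')$ in $\gr_D(A)_{n+m}$.

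Write $\gr_D(B)_1=\fb_1 t$ with $\fb_1=D(B_{\le 1})\subset\cO(S)$. First I would record that $B_{\le 0}=B\cap A_{\le 0}=\cO(S)=A_{\le 0}$, using $\cO(S)\subset B$ together with $A_{\le 0}=\ker D=\cO(S)$. I then claim $B_{\le 1}=\{f\in A_{\le 1}\sth D(f)\in\fb_1\}$, so that $B_{\le 1}$ depends only on $\fb_1$, i.e. only on the subspace $\gr_D(B)_1\subset\gr_D(A)_1$. The inclusion $\subseteq$ is clear; for $\supseteq$, given $f\in A_{\le 1}$ with $D(f)\in\fb_1=D(B_{\le 1})$, pick $b\in B_{\le 1}$ with $D(b)=D(f)$, so that $f-b\in\ker D=A_{\le 0}=\cO(S)\subset B$ and hence $f=b+(f-b)\in B_{\le 1}$.

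It remains to prove that $B=\cO(S)[B_{\le 1}]=:B'$. The inclusion $B'\subseteq B$ is immediate since $B$ is a subalgebra containing $\cO(S)$ and $B_{\le 1}$. For the reverse inclusion I would show $B_{\le\nu}\subseteq B'$ by induction on the filtration order $\nu$, the case $\nu=0$ being $B_{\le 0}=\cO(S)\subseteq B'$. For the step it suffices to treat $f\in B_{\le\nu}\setminus B_{\le\nu-1}$, so that $\gr(f)\in\gr_D(B)_\nu$ is its nonzero leading term. Here the hypothesis that $\gr_D(B)$ is generated in degree $1$ enters: I can write $\gr(f)=\sum_j\xi_{j,1}\cdots\xi_{j,\nu}$ with each $\xi_{j,k}\in\gr_D(B)_1$ nonzero, and lift $\xi_{j,k}=\gr(b_{j,k})$ for suitable $b_{j,k}\in B_{\le 1}$. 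Setting $g:=\sum_j b_{j,1}\cdots b_{j,\nu}\in B'$, multiplicativity of $\gr$ gives that the degree-$\nu$ part of $g$ equals $\sum_j\gr(b_{j,1})\cdots\gr(b_{j,\nu})=\gr(f)\neq 0$, whence $D^\nu(f-g)=0$ and $f-g\in A_{\le\nu-1}$. Since $f\in B$ and $g\in B'\subseteq B$, we get $f-g\in B\cap A_{\le\nu-1}=B_{\le\nu-1}\subseteq B'$ by the induction hypothesis, so $f\in B'$. Thus $B=\cO(S)[B_{\le 1}]$, which by the previous paragraph is determined by $\gr_D(B)_1$, and the proposition follows.

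The delicate point is precisely the lifting in the induction step: it is the degree-$1$ generation of $\gr_D(B)$, combined with the fact that $B$ is closed under multiplication, that guarantees every leading term in $\gr_D(B)_\nu$ is realized as $\gr(g)$ for an honest element $g\in B'$ with the same leading term, so that subtracting $g$ strictly lowers the filtration order and the induction closes. Without degree-$1$ generation one can only assert that the degree-$\nu$ part of $B'$ is contained in $B_{\le\nu}$, and the reconstruction of $B$ from $\gr_D(B)$ breaks down.
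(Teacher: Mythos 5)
Your proof is correct and follows essentially the same route as the paper: show $B_{\le 1}$ is recovered from $\gr_D(B)_1$ (using $B_{\le 0}=A_{\le 0}=\cO(S)$) and then prove $B=\cO(S)[B_{\le 1}]$ by induction on the filtration degree, subtracting a lift of the leading term. You merely make explicit two points the paper leaves implicit — the multiplicativity of $\gr$ via the domain property, and working with all of $B_{\le 1}$ rather than a finite set of generators — which is a cosmetic, not structural, difference.
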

\begin{proof}
If $\gr_D(B)$ is finitely generated, we can take the generators to be homogeneous, i.e. 
$$\gr_D(B)=\cO(S)[\gr(f_1),\dots,\gr(f_r)]$$ 
for some $f_1,\dots,f_r\in B_{\le 1}$. Then it follows that $B=\cO(S)[f_1,\dots,f_r]$. Indeed $B_{\le n} \subset \cO(S)[f_1,\dots,f_r]$ holds by induction for all $n \in \N$. Let $n=1$ and $f \in A_{\le 1}$. Then $f \in B \Leftrightarrow \gr (f) \in \gr_D(B)_1$ since $B_{\le 0}=A_{\le 0}= \cO(S)$. This settles the case $n=1$, and the induction step follows from the assumption on $\gr_D(B)$.
\end{proof}

Another consequence if the graded algebra of an affine extension $\hat P=\Spec(B)$ of a principal $\G_a$-bundle $\pi\colon P\to S_*$ is generated in degree 1, concerns the $\G_a$-action on the exceptional fiber $E\subset\hat P$.

 \begin{proposition}
\label{extrivial} 
If $\hat P \not\simeq S \times \G_a$ and $\gr_D (B)$ is generated in degree 1, then the exceptional fiber $E \hookrightarrow \hat P$ consists of fixed points only.
\end{proposition}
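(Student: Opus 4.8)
The plan is to show that the velocity field of the $\G_a$-action vanishes identically along $E$. Recall that for the action corresponding to the locally nilpotent derivation $D$, a closed point $p\in\hat P$ is $\G_a$-fixed if and only if $D(f)(p)=0$ for every $f\in B$ (the orbit through $p$ degenerates to the single point $p$ precisely when the tangent vector $f\mapsto D(f)(p)$ vanishes). Hence it suffices to prove that the ideal of $B$ generated by $D(B)$ vanishes on the fibre $E=\hat\pi^{-1}(\pt)$, that is, $D(B)\subset\fm_\pt B$.

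To get hold of $D(B)$, first I would use the degree-$1$ hypothesis together with Proposition~\ref{Prop:gr-uniqueness}: since $\gr_D(B)$ is generated by $\gr_D(B)_1$ as an $\cO(S)$-algebra, we may write $B=\cO(S)[f_1,\dots,f_r]$ with $f_1,\dots,f_r\in B_{\le 1}$ whose leading terms $\gr(f_1),\dots,\gr(f_r)$ generate $\gr_D(B)_1$. Because each $f_i\in B_{\le 1}=\ker D^2$, its image $D(f_i)$ lies in $B_{\le 0}=\cO(S)$, and under the identification $\gr_D(B)_1\simeq\fm_1(B)\,t$ of Proposition~\ref{prop:grad} this forces $D(f_i)\in\fm_1(B)$. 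Since $D$ is a derivation that annihilates $\cO(S)$, applying the Leibniz rule to an arbitrary element $F(f_1,\dots,f_r)$ of $B$ expresses $D$ of it as an $\cO(S)[f_1,\dots,f_r]$-combination of the $D(f_i)$, and therefore $D(B)\subset\fm_1(B)\,B$.

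It remains to control $\fm_1(B)$. Since $\hat P\not\simeq S\times\G_a$, we cannot have $\gr_D(B)=\cO(S)[t]$, for otherwise the degree-$1$ generation and Proposition~\ref{Prop:gr-uniqueness} would force $B=\cO(S)[t]=\cO(S\times\G_a)$. Consequently the second alternative of Lemma~\ref{primary} is excluded, and $\fm_1(B)$ is $\fm_\pt$-primary; in particular $\fm_1(B)\subset\fm_\pt$. Combining this with the previous paragraph yields $D(B)\subset\fm_\pt B$, and since every function in $\fm_\pt B$ vanishes on $E=\hat\pi^{-1}(\pt)$, the field $D$ vanishes along $E$. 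Hence every point of $E$ is $\G_a$-fixed, as claimed.

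The argument is essentially formal once the right inclusions are in place; the one step that must be handled with care is the passage from ``$\gr_D(B)$ generated in degree $1$'' to the concrete presentation $B=\cO(S)[f_1,\dots,f_r]$ with $D(f_i)\in\fm_1(B)$, which is exactly what Propositions~\ref{prop:grad} and~\ref{Prop:gr-uniqueness} supply. The only genuine use of the hypothesis $\hat P\not\simeq S\times\G_a$ is, via Lemma~\ref{primary}, that $\fm_1(B)$ is $\fm_\pt$-primary and hence contained in $\fm_\pt$.
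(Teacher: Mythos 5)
Your proof is correct. It is essentially the algebraic counterpart of the paper's argument, which starts from the same generators $f_1,\dots,f_r\in B_{\le 1}$ but then packages them into an equivariant closed embedding $\psi\colon\hat P\to S\times\A^r$, $z\mapsto(\pi(z),f_1(z),\dots,f_r(z))$, where $\G_a$ acts on the target by translating the $i$-th coordinate by $\tau g_i(y)$ with $g_i=Df_i$; the fiber over $\pt$ is then visibly pointwise fixed once every $g_i(\pt)=0$, and the alternative $g_i(\pt)\neq 0$ is ruled out because it would make $\hat\pi$ trivial near $\pt$, hence a principal bundle over the affine surface $S$, hence $\hat P\simeq S\times\G_a$. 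You stay entirely on the derivation side: Leibniz gives $D(B)\subset\fm_1(B)\,B$, and Lemma~\ref{primary} gives $\fm_1(B)\subset\fm_\pt$ once the alternative $\gr_D(B)=\cO(S)[t]$ is excluded --- which you do via degree-one generation and the resulting slice $D(f_1)=1$, where the paper uses triviality of $\G_a$-bundles over affine bases. Both routes hinge on the same fact, namely $Df_i(\pt)=0$ for all $i$, and the two ways of dispatching the degenerate case are interchangeable. Your version avoids constructing the embedding and is marginally more self-contained; the paper's embedding has the side benefit of describing $E$ concretely as a limit of affine lines in $\{\pt\}\times\A^r$. One small point worth making explicit in your write-up: the passage from ``$\gr_D(B)=\cO(S)[t]$'' to ``$B=\cO(S)[t]$'' is cleanest via the slice $f_1\in B_{\le 1}$ with $D(f_1)=1$ (which exists because $\fm_1(B)=D(B_{\le 1})=\cO(S)$), rather than by citing Proposition~\ref{Prop:gr-uniqueness} as a black box, since that proposition's statement is about uniqueness among subalgebras of the fixed $A=\cO(P)$; its proof, however, delivers exactly $B=\cO(S)[f_1]$.
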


\begin{proof} Choose generators $\gr (f_1), \dots, \gr (f_r) \in \gr_D(B)_1$. Then
$$\psi\colon \hat P  \to S \times \A^r, z  \mapsto (\pi (z),f_1(z),\dots,f_r(z))$$
is an equivariant embedding, when we endow the right hand side with the $\G_a$-action
\begin{eqnarray*}
(S\times\A^r)\times\G_a&\longto&S\times\A^r\\
(y,u_1,\dots,u_r,\tau)&\mapsto&(y,u_1+\tau g_1(y),\dots, u_r + \tau g_r (y)),
\end{eqnarray*}
where $g_i:=Df_i$ regarded as function on $S$. Assume that $g_i (\pt) \not= 0$
for some $i$. Then $\hat P \to S$ admits a trivialization over some  neighbourhood of $\pt\in S$. Hence it is a principal $\G_a$-bundle and thus $\hat P \simeq S \times \G_a$ since $S$ is affine.

We remark that $\psi (\pi^{-1}(y))\subset\{y\}\times\A^r$ is an affine line for $y \in S_*$ and that the exceptional fiber consists of all lines in $\{\pt\} \times \A^r$, which are limits of such lines.  
\end{proof}

\begin{remark}
For $B \subset \tilde B\subset A$ we have $B=\tilde B \Leftrightarrow\gr_D(B)=\gr_D(\tilde B)$. Namely, if $\tilde b\in\tilde B_{\leq\nu}$, there exists $b\in B_{\leq\nu}$ such that $\gr(\tilde b)=\gr(b)$, and hence $b-\tilde b\in \tilde B_{\leq\nu-1}=B_{\leq\nu-1}$. It follows by induction that $\tilde b \in B_{\le \nu}$.
\end{remark}

From now on, we specialize to $P=\SL_2 \to \A^2_* \subset \A^2$. Writing a matrix in $\SL_2$ as $\left( \begin{array}{cc} x & u \\ y & v \\ \end{array} \right)$ we may take
$f_0=u, f_1=v$, whence $g=x, h=y$. Thus $A = \cO (\SL_2)$ satisfies
$$
\gr_D(A)= \bigoplus_{\nu=0}^\infty \langle x,y\rangle^\nu t^\nu \subset \C [x,y][t].
$$
From Lemma~\ref{primary} we know that the graded subalgebra $\gr_D(B)$ of an affine extension $\hat \pi\colon \hat P\to\A^2$ is of the form
$$
C=\C[x,y]\oplus\bigoplus_{\nu=1}^\infty\mathfrak c_\nu t^\nu\subset  \C [x,y][t]
$$
with ideals $\mathfrak c_\nu\subset\langle x,y\rangle^\nu$ for all $\nu$.

\begin{question}\label{q:graded} For which decreasing sequences $(\fc_\nu)_{\nu\in\N_{>0}}$ of $\langle x,y\rangle$-primary ideals in $\C[x,y]$, is $C=\C[x,y]\oplus\bigoplus_{\nu=1}^\infty \fc_\nu t^\nu\subset\gr_D(A)$ equal to $\gr_D(B)$ for some affine extension $\hat P=\Spec(B)$ of the principal $\G_a$-bundle $\pi\colon \SL_2 \to\A^2_*$?
\end{question}

Question~\ref{q:graded} is partially answered by Theorem~\ref{Th: MT}, whose proof is the topic of sections~\ref{sec:family1} (part $1$) and~\ref{sec:family2} (part $2$). It gives the answer for two families of graded subalgebras of $\gr_D(\cO(\SL_2))$, which actually occur as the graded algebras of certain extensions of $\pi\colon\SL_2\to\A^2_*$.

\begin{theorem}
\label{Th: MT} 
Let $p,q \in \N_{>0}, \gcd(p,q)=1$ and $n \in \N_{>0}$.\\
$(1)$ There is a uniquely defined affine extension $\hat P_n$ of $\SL_2$ with ideals given by 
$$
\mathfrak c_\nu=\fm_\nu(\hat P_n)=\langle x,y\rangle^{(n+2)\nu}.
$$
$(2)$ There is an affine extension $\hat P(p,q)$ of $\SL_2$ which satisfies 
$$
\mathfrak c_\nu=\fm_\nu(\hat P(p,q))=\bigoplus_{p\alpha+q\beta\geq(p+q)\nu}\C x^\alpha y^\beta.$$
\end{theorem}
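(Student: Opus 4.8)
The plan is to treat both families by the same scheme — construct a candidate variety, check it is an affine extension in the sense of Definition~\ref{def:affext}, and then read off the ideals $\fm_\nu$ from the explicit ring $B=\cO(\hat P)\subset A=\cO(\SL_2)$ — with the extra uniqueness claim in part $(1)$ handled by generation in degree $1$ and Proposition~\ref{Prop:gr-uniqueness}. Throughout I use the normal form $\gr_D(A)=\bigoplus_\nu\langle x,y\rangle^\nu t^\nu$ with $\gr(u)=xt$ and $\gr(v)=yt$ recorded before Question~\ref{q:graded}, together with the fact from Lemma~\ref{primary} that any candidate ideal $\fm_\nu(B)$ is automatically $\langle x,y\rangle$-primary. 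In each case the verification that $\hat P$ is an affine extension amounts to checking that $B$ is a normal, finitely generated, $D$-invariant $\C$-subalgebra of $A$ containing $\C[x,y]$, with $B_f=A_f$ for every $f\in\langle x,y\rangle$; this last condition guarantees $\hat\pi^{-1}(\A^2_*)\cong\SL_2$ equivariantly and that $\SL_2\hookrightarrow\hat P$ is dense and open.

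For part $(1)$ I would build $\hat P_n$ from the fibration $\SL_2\to\p^1$ induced by $\SL_2\to\A^2_*\to\p^1$, whose fibre is a Borel subgroup isomorphic to $\G_m\times\G_a$ and which trivialises over the two standard charts of $\p^1$. Enlarging the fibre by filling in the $\G_m$-factor to $\A^1$, and gluing the two enlarged charts by an $n$-dependent twist (an $\cO_{\p^1}(\pm n)$ modification of the naive gluing), produces a quasi-affine $\G_a$-variety $\tilde P_n\to\p^1$ carrying the residual right $\G_a$-action, in which the fibre over the origin of $\A^2$ contains an exceptional rational curve. I would then set $\hat P_n:=\Spec\cO(\tilde P_n)$, check that this affinisation is normal affine and contracts that rational curve to a single point $p_0$, and verify the extension conditions above. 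The ideal computation then reduces nicely: one determines $B_{\le1}$ — the fibrewise-linear functions $a+bu+cv$ that extend across $p_0$ — and shows their leading coefficients $bx+cy$ sweep out exactly $\langle x,y\rangle^{n+2}$, so that $\fm_1(B)=\langle x,y\rangle^{n+2}$. Since the construction realises $B$ as generated over $\C[x,y]$ by elements of $B_{\le1}$, one checks that $\gr_D(B)$ is generated in degree $1$, whence $\fm_\nu(B)=\fm_1(B)^\nu=\langle x,y\rangle^{(n+2)\nu}$ and the extension is unique with these ideals by Proposition~\ref{Prop:gr-uniqueness}.

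For part $(2)$ there is no uniqueness to prove, so I would instead compute the whole sequence $(\fm_\nu)$ directly from a weighted structure. Here I would realise $\SL_2$ as a $\G_m$-bundle over a Danielewski surface and enlarge the $\G_m$-fibre to $\A^1$ with $\G_m$ acting by the coprime weights $(p,q)$, so that $B$ inherits a $\G_m$-grading refining the $D$-filtration. The monomial valuation $w$ on $\C[x,y]$ with $w(x)=p$ and $w(y)=q$ governs which functions extend: a fibrewise-degree-$\nu$ function extends precisely when its leading coefficient lies in the valuation ideal $\{w\ge(p+q)\nu\}$, and I would show $\fm_\nu(B)=\bigoplus_{p\alpha+q\beta\ge(p+q)\nu}\C x^\alpha y^\beta$ by matching the $\G_m$-eigenspaces of $B$ against this cone condition, the normalisation $p+q=w(xy)$ being dictated by the cocycle $(xy)^{-1}$ of $\SL_2$. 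One checks $\fc_\nu\subset\langle x,y\rangle^\nu$ directly, confirming that the resulting $C$ sits inside $\gr_D(A)$ as required.

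I expect the genuine work to lie in the explicit determination of $B=\cO(\hat P)$ as a subalgebra of $A$ and in extracting its $D$-filtration, that is, in proving both inclusions for each $\fm_\nu$. In part $(1)$ the delicate point is the affinisation: showing that $\Spec\cO(\tilde P_n)$ is normal affine, that it genuinely contracts the exceptional curve to one singular point (so that the exceptional fibre consists of $\G_a$-fixed points, consistent with Proposition~\ref{extrivial}), and that no functions extend beyond those producing $\langle x,y\rangle^{n+2}$ in degree $1$. In part $(2)$ the difficulty is the combinatorial bookkeeping of tracking the weighted-degree condition through the $\G_m$-grading so as to pin down the cone $\{p\alpha+q\beta\ge(p+q)\nu\}$ exactly, rather than a proper sub- or super-set of it.
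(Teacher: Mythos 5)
Your proposal follows essentially the same route as the paper: for part $(1)$ it realizes $\SL_2$ as a bundle over $\p^1$ with Borel fiber, enlarges the fiber by an $n$-twisted filling-in of the $\G_m$-factor, affinizes (contracting a rational curve), and then gets the ideals and uniqueness from degree-$1$ generation together with Proposition~\ref{Prop:gr-uniqueness}; for part $(2)$ it forms $\SL_2\times^{\G_m}\A^1$ over the Danielewski-type quotient and reads off $\fm_\nu$ from the $(p,q)$-weighted $\G_m$-grading, with $t$ of degree $-(p+q)$. This matches the paper's construction and ideal computations in both families, so no further comparison is needed.
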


\begin{remark}\label{Rem:MT}
\begin{enumerate}[$(1)$]
\item Note that we may extend the first family by taking $\hat P_0:=\hat P(1,1)$.
\item The graded subalgebra $\C[x,y]\oplus\bigoplus_{\nu=1}^\infty\langle x,y\rangle^{(n+2)\nu} t^\nu\subset\gr_D(\cO(\SL_2))$ given by the ideal sequence $\fm_\nu(\hat P_n)$ is generated in degree 1, and hence uniquely determines the extension $\hat P_n$ by Proposition~\ref{Prop:gr-uniqueness}.
\item In the second family, we see that $x^3\in \fm_2(\hat P(2,1))$ but $x^3\notin \fm_1(\hat P(2,1))^2$. It follows that the algebra given by the above ideal sequence $\fm_\nu(\hat P(2,1))$ is not generated by its elements in degree 1.
\end{enumerate}
\end{remark}

\section{The first family of $\SL_2$-extensions}\label{sec:family1}
We prove part $(1)$ of Theorem~\ref{Th: MT} by constructing a family $\hat P_n$, indexed by $n>0$, of affine extensions of $\SL_2$ with $\fm_\nu(\hat P_n)=\langle x,y\rangle^{(n+2)\nu}$ and in the end of the section we give the proof of Theorem~\ref{equimod}. In order to simplify notation, we fix the positive integer $n\in\N_{>0}$, and tacitly understand that most of the constructions in this section depend on $n$. For instance we write $\hat P$ rather than $\hat P_n$ although $\hat P$ does depend on $n$.

Let $B_2=\G_a\rtimes\G_m$ denote the semidirect product with group multiplication $(a,b).(c,d)=(a+b^2c,bd)$. We also denote this product by $\rho_{(c,d)}(a,b)=\lambda_{(a,b)}(c,d)$ (i.e. $\rho$ for right, $\lambda$ for left). Note that $B_2\simeq \A^1\times\A^1_*$ as a variety, and that it can be realized as a (Borel) subgroup of $\SL_2$ via
$$
B_2\hookrightarrow\SL_2,\quad
(\alpha,\beta)\mapsto\begin{pmatrix}
\beta&\beta^{-1}\alpha\\
0&\beta^{-1}\end{pmatrix}.
$$
Let $U_0=\{[x:y]\in\p^1,\,x\neq 0\}\simeq\A^1,\textrm{ and }U_1=\{[x:y]\in\p^1,\,y\neq 0\}\simeq\A^1$. As usual, we take $x/y$ and $x/y$ as coordinates on $U_0$ and $U_1$ respectively.
\begin{remark}\label{rem:p1bundle}
The map $\SL_2\to\p^1,\quad A=(a_{ij})\mapsto[a_{11}:a_{21}]$ realizes $\SL_2$ as a $B_2$-principal bundle over $\p^1$, with $B_2$-equivariant trivializations given by \\
\begin{tabular}{l|r}
\begin{minipage}{0.50\textwidth}
\begin{eqnarray*}
\tau_0\colon U_0\times B_2&\stackrel\sim\longto&(\SL_2)_x\\
(z,(u,v))&\mapsto&\begin{pmatrix}
1&0\\z&1
\end{pmatrix}\cdot
\begin{pmatrix}
v&v^{-1}u\\0&v^{-1}
\end{pmatrix}
\end{eqnarray*}
\end{minipage}
&
\begin{minipage}{0.48\textwidth}
\begin{eqnarray*}
\tau_1\colon U_1\times B_2&\stackrel\sim\longto&(\SL_2)_y\\
(z,(u,v))&\mapsto&
\begin{pmatrix} z & -1\\1&0\end{pmatrix}\cdot
\begin{pmatrix}
v&v^{-1}u\\0&v^{-1}
\end{pmatrix}.
\end{eqnarray*}
\end{minipage}
\end{tabular}
and transition function given by $\tau_1^{-1}\tau_0\colon (z,(u,v))\mapsto(z^{-1},(z,z).(u,v))$.
\end{remark}

First we want to endow $\A^2$ with two commuting $B_2$-actions (depending on $n$), one from the left, the other from the right, i.e. $(b_1x)b_2=b_1(xb_2)$ for all $b_1,b_2\in B_2$. The next step will be to find simultaneous left- and right $B_2$-embeddings $B_2\hookrightarrow\A^2$ with respect to the two $B_2$-actions on $\A^2$. As a first step, we consider the automorphism
$$
\phi\colon \A^1\times\A^1_* \to \A^1\times\A^1_*,\quad (x,y)\mapsto(xy^n,y).
$$
Since $B_2= \A^1\times\A^1_*$ as a variety, we may conjugate the group multiplication with $\phi$ and obtain $B_2$-actions on $\A^1\times\A^1_*$ as follows.
\begin{definition}\label{def:LRactions}We define the $B_2$-actions $*_L$ and $*_R$ by\\
\begin{tabular}{l|r}
\begin{minipage}{0.50\textwidth}
\begin{eqnarray*}
*_L\colon B_2\times (\A^1\times\A^1_*)&\to&\A^1\times\A^1_*\\
((s,t),(x,y))&\mapsto& (s,t)*_L(x,y):=\\
&& (\phi\lambda_{(s,t)}\phi^{-1})(x,y)
\end{eqnarray*}
\end{minipage}
&
\begin{minipage}{0.48\textwidth}
\begin{eqnarray*}
*_R\colon(\A^1\times\A^1_*)\times B_2&\to& \A^1\times\A^1_*\\
((x,y),(s,t))&\mapsto&(x,y)*_R(s,t):=\\
&&(\phi\rho_{(s,t)}\phi^{-1})(x,y)
\end{eqnarray*}
\end{minipage}
\end{tabular}
\end{definition}
\begin{proposition} The actions $*_L$ and $*_R$ admit extensions
to a left action $\hat *_L$ and a right action $\hat *_R$ on $\A^2\supset \A^1\times\A^1_*$.
\end{proposition}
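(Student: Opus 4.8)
The plan is to make both actions completely explicit and to observe that, after conjugation by $\phi$, the negative powers of $y$ introduced by $\phi^{-1}$ cancel, leaving polynomial formulas that visibly make sense on all of $\A^2$. First I would record $\phi^{-1}(x,y)=(xy^{-n},y)$ together with the translations $\lambda_{(s,t)}(a,b)=(s+t^2a,tb)$ and $\rho_{(s,t)}(a,b)=(a+b^2s,bt)$ in $B_2$. Computing $\phi\circ\lambda_{(s,t)}\circ\phi^{-1}$ and $\phi\circ\rho_{(s,t)}\circ\phi^{-1}$ and simplifying, I would obtain
\begin{eqnarray*}
(s,t)*_L(x,y)&=&(t^{n+2}x+st^ny^n,\,ty),\\
(x,y)*_R(s,t)&=&(t^nx+st^ny^{n+2},\,ty).
\end{eqnarray*}

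The key observation is that the factor $y^{-n}$ produced by $\phi^{-1}$ is cancelled exactly by the factor $y^n$ arising from the final application of $\phi$; this is precisely what the choice $\phi(x,y)=(xy^n,y)$ was designed to achieve. Consequently the right-hand sides are polynomials in $x$ and $y$ (with $t$ invertible but appearing only in nonnegative powers), so the same formulas define morphisms
\[\hat *_L\colon B_2\times\A^2\to\A^2\quad\text{and}\quad\hat *_R\colon\A^2\times B_2\to\A^2\]
extending $*_L$ and $*_R$; on the added line $\{y=0\}$ they read $(s,t)\,\hat *_L\,(x,0)=(t^{n+2}x,0)$ and $(x,0)\,\hat *_R\,(s,t)=(t^nx,0)$.

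It then remains to check that these morphisms satisfy the group-action axioms. I would argue by density: the identity axiom holds identically (for instance $(0,1)\,\hat *_L\,(x,y)=(x,y)$), while the associativity identities are equalities of morphisms out of $B_2\times B_2\times\A^2$ (respectively $\A^2\times B_2\times B_2$) which already hold on the dense open subset where $y\neq0$; since $\A^2$ is reduced and separated, two morphisms agreeing on a dense open agree everywhere, so associativity holds on all of $\A^2$. The identical argument shows that $\hat *_L$ and $\hat *_R$ still commute, $\lambda$ and $\rho$ commuting in any group. I anticipate no real obstacle here: the only point requiring care is the cancellation of $y^{-n}$, engineered by the automorphism $\phi$, and once the polynomial formulas are in hand the extension and the verification of the axioms are routine.
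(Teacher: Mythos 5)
Your proposal is correct and follows essentially the same route as the paper: compute the conjugated actions explicitly, observe that the $y^{-n}$ from $\phi^{-1}$ cancels against the $y^n$ from $\phi$ so the formulas are polynomial, and conclude they extend to $y=0$ (the paper leaves the verification of the action axioms on the extension implicit, where you supply the standard density argument). Your explicit formulas agree with those in the paper.
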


\begin{proof}
It follows from Definition~\ref{def:LRactions} that $*_L$ and $*_R$ are given as follows for $(s,t)\in B_2$ and $(x,y)\in\A^1\times\A^1_*$\\
\begin{tabular}{l|r}
\begin{minipage}{0.50\textwidth}
\begin{eqnarray*}
(s,t) *_L(x,y)&=&(st^ny^n+t^{n+2}x,ty)
\end{eqnarray*}
\end{minipage}
&
\begin{minipage}{0.48\textwidth}
\begin{eqnarray*}
(x,y) *_R(s,t)&=&(t^n(x+y^{n+2}s),ty),
\end{eqnarray*}
\end{minipage}
\end{tabular}\vspace{1ex}
and these are obviously defined even for $y=0$.
\end{proof} We will use the notation $*_L$ and $*_R$ even for the extended actions.

\begin{remark} The morphism $\phi\colon B_2 \hookrightarrow \A^1 \times \A^1_* \subset \A^2$ realizes $\A^2$ 
both as a left- and a right $B_2$-embedding with respect to the $B_2$-actions on $\A^2$ given by $*_L$ and $*_R$. We shall treat that map as an inclusion and write $B_2 \subset \A^2$.
\end{remark}

Now we use this $B_2$-embedding in order to define a fiber bundle $Q\to\p^1$ associated to the fiber bundle in Remark~\ref{rem:p1bundle} as follows.
\begin{definition}\label{def:Q}
We define
$$
Q:=\SL_2 \times^{B_2} \A^2,
$$ where $\A^2$ is endowed with the left- and right $B_2$-actions $*_L$ and $*_R$.

\end{definition}
This means that as a set, $Q$ is the orbit space with respect to the action
$$
B_2 \times (\SL_2 \times \A^2) \to \SL_2 \times \A^2,\quad (b,(x,y)) \mapsto (xb^{-1},by),
$$
while it is obtained as a variety by taking the locally trivial fiber bundle from Remark~\ref{rem:p1bundle} and replacing the general fiber $B_2$ by $\A^2$.

\begin{proposition}\label{prop:sl2embedding}
The action of $\SL_2$ by left multiplication on itself induces an $\SL_2$-action on $Q$. The natural inclusion $\SL_2\subset Q$ coming from $B_2$ is thus an $\SL_2$-embedding. 
\end{proposition}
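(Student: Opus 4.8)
The plan is to verify that left multiplication by $\SL_2$ on the first factor of $\SL_2\times\A^2$ descends to a well-defined action on the quotient $Q=\SL_2\times^{B_2}\A^2$, and then to check that this action is compatible with the inclusion $\SL_2\subset Q$ coming from the $B_2$-embedding $B_2\subset\A^2$. The essential point is that the left $\SL_2$-action and the right $B_2$-action (which is the one defining the equivalence relation on $\SL_2\times\A^2$) commute.

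\emph{First step.} Recall that $Q$ is the orbit space of the $B_2$-action $(b,(g,v))\mapsto(gb^{-1},b\mathbin{*_L}v)$ on $\SL_2\times\A^2$. Consider the map $\SL_2\times(\SL_2\times\A^2)\to\SL_2\times\A^2$, $(h,(g,v))\mapsto(hg,v)$, given by left multiplication on the $\SL_2$-factor. To see that this descends to $Q$, I would check that it permutes the $B_2$-orbits: for $b\in B_2$ the point $(g,v)$ and $(gb^{-1},b\mathbin{*_L}v)$ lie in the same orbit, and applying $h$ gives $(hg,v)$ and $(hgb^{-1},b\mathbin{*_L}v)$, which again differ by the same element $b$ and hence lie in one $B_2$-orbit. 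Thus the composite $\SL_2\times(\SL_2\times\A^2)\to\SL_2\times\A^2\to Q$ is constant on $B_2$-orbits of the second argument and factors through an action morphism $\SL_2\times Q\to Q$. That this factored map satisfies the action axioms is inherited from associativity of multiplication in $\SL_2$, since left and right multiplications commute.

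\emph{Second step.} For the embedding statement, recall from Remark~\ref{rem:p1bundle} that $\SL_2\to\p^1$ is the $B_2$-principal bundle, and the inclusion $\SL_2\subset Q$ sends $g\in\SL_2$ to the class of $(g,\phi(e))$, i.e. the image of the $B_2$-embedding $\phi\colon B_2\hookrightarrow\A^2$ applied to the identity, carried over in each trivialization by the transition data. Concretely, on the open set $(\SL_2)_x$ over $U_0$ the section $\tau_0$ identifies $\SL_2$ with $U_0\times B_2$, and the inclusion $B_2\subset\A^2$ via $\phi$ gives the embedding $\SL_2\hookrightarrow Q$ locally. I would then verify that, under this identification, the left $\SL_2$-action on $Q$ constructed in the first step restricts to ordinary left multiplication on $\SL_2$; this is immediate since on $\SL_2\times\{\phi(B_2)\}\subset\SL_2\times\A^2$ the map $(h,(g,\phi(b)))\mapsto(hg,\phi(b))$ is precisely left multiplication in the group, and the right $B_2$-structure matches $*_R$ on $\phi(B_2)$ by the preceding remark.

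\emph{Main obstacle.} The only genuine subtlety is checking well-definedness on the chart overlaps, i.e. confirming that the $\SL_2$-action glues across the two trivializations $\tau_0,\tau_1$ using the transition function $\tau_1^{-1}\tau_0\colon(z,(u,v))\mapsto(z^{-1},(z,z).(u,v))$ from Remark~\ref{rem:p1bundle}. This amounts to the commutativity of left $\SL_2$-multiplication with the right $B_2$-action that defines the gluing, which is a direct consequence of associativity; I expect no difficulty beyond bookkeeping, so the result follows essentially from the formal properties of the associated-bundle construction.
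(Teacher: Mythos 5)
Your argument is correct, but it takes a genuinely different route from the paper. You use the formal properties of the associated-bundle construction: left multiplication on the $\SL_2$-factor commutes with the right $B_2$-action defining $Q=\SL_2\times^{B_2}\A^2$, and since $\SL_2\times\A^2\to Q$ is a Zariski-locally trivial principal $B_2$-bundle (hence a categorical quotient, as is its pullback $\SL_2\times(\SL_2\times\A^2)\to\SL_2\times Q$), the induced set map $\SL_2\times Q\to Q$ is automatically a morphism, and the action axioms descend from associativity; equivariance of the inclusion $\SL_2=\SL_2\times^{B_2}B_2\hookrightarrow Q$ is then immediate. The paper instead performs exactly the bookkeeping you defer: it writes out the action of a matrix $A=\left(\begin{smallmatrix}a&b\\c&d\end{smallmatrix}\right)$ on a point $(z,(u,v))$ of each chart $Q_0$, $Q_1$ explicitly --- note the action does not preserve the charts, so the formulas are case-split according to whether $a+bz\neq 0$ or $c+dz\neq 0$ --- and concludes algebraicity by inspection. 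Your approach is cleaner and works for any associated bundle, but it relies on the universal property of the quotient, which you should state explicitly rather than hide in the phrase ``factors through an action morphism''; the paper's computation makes algebraicity manifest without that appeal and records formulas that are concretely reusable. One small imprecision in your second step: for compatibility of the embedding with the equivalence relation on $\SL_2\times\A^2$ you need the left $B_2$-equivariance of $\phi$ (that is, $\phi\circ\lambda_b=(b\,{*_L}\,\cdot)\circ\phi$, which holds by the definition of $*_L$), not the right action $*_R$; the latter only enters later, for the structural $\G_a$-action in Remark~\ref{rem:actions}.
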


In order to prove Proposition~\ref{prop:sl2embedding}, and also in order to be able to compute $\cO(Q)$, we present the explicit description of $Q$ in terms of gluing data:\par
Let $U_i\subset\p^1$ be as above for $i=0,1$, let $Q_0=U_0\times\A^2,\, Q_1=U_1\times\A^2$, and finally let $V_i=(U_0\cap U_1)\times\A^2\subset U_i\times\A^2.$ Then $Q$ is the variety obtained by gluing $Q_0$ and $Q_1$ along $V_0$ and $V_1$ via the morphism
\begin{eqnarray*}
V_0&\to& V_1\\
(z,(u,v))&\mapsto&(z^{-1},(z,z)*_L(u,v))\\
&&=(z^{-1},(z^{n+1}v^n+z^{n+2}u,z v)).
\end{eqnarray*}
The inverse morphism is given by
\begin{eqnarray*}
V_1&\to& V_0\\
(z,(u,v))&\mapsto&(z^{-1},(-z,z)*_L(u,v))\\
&&=(z^{-1},(-z^{n+1}v^n+z^{n+2}u,zv)).
\end{eqnarray*}

\begin{proof}[Proof of Proposition~$\ref{prop:sl2embedding}$]
The claim follows from the following formulas, which show that the $\SL_2$-action is algebraic. The matrix  $A=\begin{pmatrix}a&b\\c&d\end{pmatrix}\in\SL_2$ acts on $(z,(u,v))\in Q_0$ as
$$A(z,(u,v)):=\begin{cases}\left(\frac{c+dz}{a+bz},(b(a+bz),a+bz)*_L(u,v)\right)\in Q_0&\textrm{ if }a+bz\neq 0\\
\left(\frac{a+bz}{c+dz},(d(c+dz),c+dz)*_L(u,v)\right)\in Q_1&\textrm{ if }c+dz\neq 0,
\end{cases}$$
and on $(z,(u,v))\in Q_1$ as
$$A(z,(u,v)):=\begin{cases}\left(\frac{cz+d}{az+b},(-a(az+b),az+b)*_L(u,v)\right)\in Q_0&\textrm{ if }az+b\neq 0\\
\left(\frac{az+b}{cz+d},(-c(cz+d),cz+d)*_L(u,v)\right)\in Q_1&\textrm{ if }cz+d\neq 0.
\end{cases}$$
\end{proof}

\begin{remark}\label{rem:actions}
The right $B_2$-action $*_R$ has the $Q_i$ as invariant subsets; it is given on $Q_i$ for $i=0,1$ by $$(z,(u,v))*_R(s,t)=(z,(u,v)*_R(s,t)),$$ and it is well defined because of the fact that the left- and the right action commute. The $\G_a$-action on $Q$ induced by $*_R$ via the inclusion $\G_a\simeq\G_a\times\{1\}\subset B_2$ is given on $Q_i$ by $(u,v)*_R(s,1)=(u+v^{n+2}s,v),$ and this action extends the  structural $\G_a$-bundle action on $\SL_2\subset Q$. 
\end{remark}

We now take the affinization of $Q$, i.e. we take $\Spec(\cO(Q))$. This construction is described in detail, and it turns out that it is given by the contraction of a curve $C\subset Q$ which is isomorphic to $\p^1$ to a point. Indeed the right $B_2$-action on $Q$
restricts to a $\G_m$-action (use the inclusion $\G_m=\{0\} \times \G_m \subset B_2$), which is fiber preserving and elliptic on every fiber $\A^2$. The curve $C$ then consists of the sources of that $\G_m$-action. 

\par Using the local chart description of $Q$ given before the proof of Proposition~\ref{prop:sl2embedding}, we see that each of the following $n+5$ functions $f_0,f_1,g_0,\dots,g_{n+1},h$ is a well defined regular function on $Q$. The first line gives their definitions on $Q_0$ and the second line gives their definitions on $Q_1$.\par

\begin{tabular}{l|l|r}\hspace {-5ex}
\begin{minipage}{0.26\textwidth}
\begin{eqnarray*}
f_i\colon Q&\longto&\C\\
(t,(u,v))&\mapsto& t^{1-i}v\\
(t,(u,v))&\mapsto& t^iv
\end{eqnarray*}

\end{minipage}
&
\begin{minipage}{0.41\textwidth}
\begin{eqnarray*}
g_i\colon Q&\longto&\C\\
(t,(u,v))&\mapsto& t^{n+2-i}u+t^{n+1-i}v^n\\
(t,(u,v))&\mapsto& t^iu.
\end{eqnarray*}
\end{minipage}
&
\begin{minipage}{0.39\textwidth}
\begin{eqnarray*}
h\colon Q&\longto&\C\\
(t,(u,v))&\mapsto& u\\
(t,(u,v))&\mapsto& t^{n+2}u-t^{n+1}v^n.
\end{eqnarray*}
\end{minipage}
\end{tabular}\vspace{2ex}
Inspired by algebraic relations between these functions, we define a variety as follows:
\begin{definition}\label{def: hatP}
Let $\hat P\hookrightarrow \A^{n+5}$ be the reduced affine variety which is given by the ideal
$$I:=\left\langle
\begin{array}{c}
(Y_iY_j-Y_kY_l)_{i+j=k+l},\\
(X_0Y_{i+1}-X_1Y_i)_{i=0,\dots,n},\\
(ZY_i+X_1^nY_{i+1}-Y_{i+1}Y_{n+1})_{i=0,\dots,n},\\
ZX_0+X_1^{n+1}-X_1Y_{n+1}
\end{array}
\right\rangle\subset\C[X_0,X_1,Y_0,\dots,Y_{n+1},Z].$$
\end{definition}
As a preparation for Proposition~\ref{prop:contraction}, where we study the affinization morphism of $Q$, we make the following observation.
\begin{lemma}\label{Lem:chart}
Let $S_0$ and $S_1$ be the closed subsets of $\hat P$ which are given respectively by $$S_0\colon\,\, X_1=Z=0\hspace{1.5cm}\textrm{ and }\hspace{1.5cm} S_1\colon\,\, X_0=Y_0=0.$$
Then $S_0\cap S_1=\{0\}$.
\end{lemma}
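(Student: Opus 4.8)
The plan is to determine the intersection set-theoretically by imposing on the generators of $I$ the four equations $X_0=X_1=Y_0=Z=0$ that together define $S_0$ and $S_1$, and then showing these force every coordinate to vanish. First I would note that a point of $S_0\cap S_1$ lies in $\hat P$ and satisfies $X_1=Z=0$ (from $S_0$) together with $X_0=Y_0=0$ (from $S_1$); so it is enough to substitute $X_0=X_1=Y_0=Z=0$ into the four families of generators of $I$ and solve for the remaining coordinates $Y_1,\dots,Y_{n+1}$.

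Carrying out the substitution, the second family $X_0Y_{i+1}-X_1Y_i$ and the single relation $ZX_0+X_1^{n+1}-X_1Y_{n+1}$ vanish identically, where I use the hypothesis $n\ge 1$ so that the powers $X_1^n$ and $X_1^{n+1}$ die once $X_1=0$; hence these impose no condition. The third family $ZY_i+X_1^nY_{i+1}-Y_{i+1}Y_{n+1}$ reduces to $Y_{i+1}Y_{n+1}=0$ for $i=0,\dots,n$, and the top case $i=n$ gives $Y_{n+1}^2=0$, so $Y_{n+1}=0$ over $\C$.

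It then remains to kill $Y_1,\dots,Y_n$, and for this I would invoke the minor relations (the first family). Among them are $Y_j^2=Y_{j-1}Y_{j+1}$ for $1\le j\le n$, arising from $2j=(j-1)+(j+1)$. Starting from $Y_0=0$, an immediate induction yields $Y_1^2=Y_0Y_2=0$, then $Y_2^2=Y_1Y_3=0$, and so on up to $Y_n^2=Y_{n-1}Y_{n+1}=0$, forcing $Y_1=\dots=Y_n=0$. Combined with $X_0=X_1=Y_0=Z=0$ and the vanishing $Y_{n+1}=0$ obtained above, this shows that the origin is the only point of $S_0\cap S_1$. Conversely the origin lies in $\hat P$, since every generator of $I$ is a sum of monomials of degree $\ge 2$, and it clearly satisfies the defining equations of both $S_0$ and $S_1$; hence $S_0\cap S_1=\{0\}$.

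The only delicate point, which I would flag explicitly, is the vanishing of the top coordinate $Y_{n+1}$: the minor relations alone never equate $Y_{n+1}^2$ to a product of lower coordinates, because $2(n+1)$ cannot be written as $i+j$ with $i,j\le n+1$ except for $i=j=n+1$. Thus the third family of generators is genuinely needed to conclude $Y_{n+1}=0$; everything else is a routine substitution and induction.
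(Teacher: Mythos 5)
Your proof is correct and takes essentially the same approach as the paper: substitute the four vanishing coordinates into the generators of $I$ and deduce that the remaining $Y_1,\dots,Y_{n+1}$ vanish using the third family of relations together with the minor relations $Y_j^2=Y_{j-1}Y_{j+1}$. The only cosmetic difference is that you first obtain $Y_{n+1}=0$ from $Y_{n+1}^2=0$ (the $i=n$ case) and then induct upward from $Y_0=0$, whereas the paper extracts $b_1b_{n+1}=0$ and splits into cases with a downward induction; the content is identical.
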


\begin{proof}
Suppose that $p=(a_0,a_1,b_0,\dots,b_{n+1},c)\in S_0\cap S_1$. Then $a_0=a_1=b_0=c=0$, and using the relations given by the ideal $I$, we get $b_{1}b_{n+1}=0$. If $b_{n+1}=0$, we get $b_i=0$ for all $i$ by induction since $b_i^2=b_{i-1}b_{i+1}$ for $i=n,n-1,\dots,1$. If $b_{n+1}\neq 0$, all $b_i$ are zero, since $b_{i+1}b_{n+1}=0$. We get $p=0\in\hat P$ in any case.
\end{proof}
\begin{remark} Using the relations given by $I$, one can check that $S_i\simeq\A^2$ for $i=0,1$. 
\end{remark}

\begin{proposition}\label{prop:contraction}
The morphism 
$$
\psi\colon Q \longto \hat P,\quad q \mapsto (f_0,f_1,g_0,\dots,g_{n+1},h)(q)
$$
 contracts the curve $C\simeq\p^1$ given by $u=v=0$ (in both $Q_0$ and $Q_1$) to $0\in\hat P$. The restriction $\psi|_{Q\setminus C}\colon Q\setminus C\stackrel\sim\longto\hat P\setminus\{0\}$ is an isomorphism.
\end{proposition}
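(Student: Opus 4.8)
\emph{Strategy.} The plan is to show first that $\psi$ genuinely lands in $\hat P$ and kills $C$, and then to produce an explicit inverse over $\hat P\setminus\{0\}$ by inverting the defining relations of the ideal $I$ one chart at a time, the compatibility of the charts being governed by those same relations together with the gluing datum of $Q$.

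\emph{Well-definedness and contraction of $C$.} First I would check that $\psi(Q)\subseteq\hat P$, i.e. that each generator of $I$ pulls back to $0$. Using the two local descriptions (on $Q_0$ one reads off $X_0=tv$, $X_1=v$, $Z=u$, $Y_i=t^{n+1-i}(tu+v^n)$, while on $Q_1$ one has $X_0=v$, $X_1=tv$, $Y_i=t^{i}u$, $Z=t^{n+2}u-t^{n+1}v^n$), the relations $Y_iY_j-Y_kY_l$ for $i+j=k+l$ and $X_0Y_{i+1}-X_1Y_i$ become mere bookkeeping of powers of $t$, and the remaining two families are handled identically; this is routine in both charts. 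Since every one of $f_0,f_1,g_0,\dots,g_{n+1},h$ vanishes when $u=v=0$, we get $\psi(C)=\{0\}$, which is the first claim.

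\emph{Inverse off the vertex.} The key observation is that on $Q_0$ the functions $X_1=v$ and $Z=u$ vanish simultaneously exactly along $C$, so $\psi(Q_0\setminus C)\subseteq\hat P\setminus S_0$; symmetrically $X_0=v$ and $Y_0=u$ on $Q_1$ give $\psi(Q_1\setminus C)\subseteq\hat P\setminus S_1$. By Lemma~\ref{Lem:chart} we have $\hat P\setminus\{0\}=(\hat P\setminus S_0)\cup(\hat P\setminus S_1)$, so it is enough to invert $\psi$ over each open separately. Over $\hat P\setminus S_0$ I would recover the $Q_0$-coordinates by $(t,(u,v))=(X_0/X_1,(Z,X_1))$ on $\{X_1\neq0\}$ and by $(t,(u,v))=\bigl((Y_{n+1}-X_1^n)/Z,(Z,X_1)\bigr)$ on $\{Z\neq0\}$; these agree on the overlap precisely because of the relation $ZX_0+X_1^{n+1}-X_1Y_{n+1}=0$, so they patch to a morphism $\chi_0\colon\hat P\setminus S_0\to Q_0\setminus C$. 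The identity $\psi\circ\chi_0=\id$ then uses the relations $X_0Y_{i+1}=X_1Y_i$ to propagate $Y_{n+1}=ZX_0/X_1+X_1^{n}$ down to $Y_i=(X_0/X_1)^{\,n+1-i}Y_{n+1}$, thereby recovering every coordinate, while $\chi_0\circ\psi=\id$ is a direct substitution. The symmetric construction over $\hat P\setminus S_1$ uses $(t,(u,v))=(X_1/X_0,(Y_0,X_0))$ on $\{X_0\neq0\}$ and $(t,(u,v))=(Y_1/Y_0,(Y_0,X_0))$ on $\{Y_0\neq0\}$, the two agreeing via $X_0Y_1-X_1Y_0=0$.

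\emph{Gluing and the main obstacle.} It remains to glue $\chi_0$ and $\chi_1$ over $\hat P\setminus(S_0\cup S_1)$ into a single morphism $\chi\colon\hat P\setminus\{0\}\to Q\setminus C$ inverse to $\psi$. I expect this overlap check to be the main technical point: the two recovered points lie in $V_0$ and $V_1$ respectively, and one must verify that the transition morphism $V_0\to V_1,\ (z,(u,v))\mapsto(z^{-1},(z^{n+1}v^n+z^{n+2}u,zv))$ carries the first to the second. Concretely, applying the transition to $\chi_0$'s point sends the base coordinate $X_0/X_1$ to $X_1/X_0$ and the fibre coordinates to $(Y_0,X_0)$ once one substitutes $Y_0=(X_0/X_1)^{n+1}\bigl((X_0/X_1)Z+X_1^{n}\bigr)$, which is exactly the expression forced by the relations of $I$. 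One must also keep track that the recovered points genuinely avoid $C$ and sit in the correct chart, i.e. that $z=t\neq0$ on the overlap; this holds since, when $v=0$, the condition $Y_0\neq0$ already forces $t\neq0$. Granting this compatibility, $\chi$ is a two-sided inverse of $\psi|_{Q\setminus C}$, completing the proof.
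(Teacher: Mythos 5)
Your proposal is correct and follows essentially the same route as the paper: the same reduction to the two opens $\hat P\setminus S_0$ and $\hat P\setminus S_1$ via Lemma~\ref{Lem:chart}, and the identical local inverse formulas $(X_0/X_1,(Z,X_1))$, $\bigl((Y_{n+1}-X_1^n)/Z,(Z,X_1)\bigr)$ over $\hat P\setminus S_0$ and $(X_1/X_0,(Y_0,X_0))$, $(Y_1/Y_0,(Y_0,X_0))$ over $\hat P\setminus S_1$. The only difference is that you spell out the chart-compatibility and $\psi\circ\chi_i=\id$ verifications that the paper dismisses with ``one can check, using the relations given by the ideal $I$.''
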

\begin{proof}
We check that $\psi$ induces isomorphisms
$$
\hat P\setminus S_i \stackrel\sim\longto  Q_i\setminus(Q_i\cap C),\quad i=0,1.
$$
The restriction of $\psi$ to $Q_1$ is given by $$\psi|_{Q_1}\colon (t,(u,v))\mapsto(v,tv,u,tu,\dots,t^{n+1}u,t^{n+2}u-t^{n+1}v^n).$$ Note that $\psi(Q\setminus Q_1)$ is the image of $t=0$ in the chart $Q_0$. It follows that $\psi(Q\setminus Q_1)\subset S_1$ with $S_1$ as in Lemma~\ref{Lem:chart}, and we may define an inverse map locally:
\begin{eqnarray*}
\hat P\setminus S_1&\stackrel\sim\longto& Q_1\setminus(Q_1\cap C)\\
(a_0,a_1,b_0,\dots,b_{n+1},c)&\mapsto&\begin{cases}(a_1/a_0,b_0,a_0)\textrm{ if }a_0\neq 0\\
(b_1/b_0,b_0,a_0)\textrm{ if }b_0\neq 0.
\end{cases}
\end{eqnarray*}
Analogously, we define an inverse of $$\psi|_{Q_0}\colon(t,(u,v))\mapsto(tv,v,t^{n+2}u+t^{n+1}v^n,t^{n+1}u+t^nv^n,\dots,tu+v^n,u)$$ as follows:
\begin{eqnarray*}
\hat P\setminus S_0&\stackrel\sim\longto& Q_0\setminus(Q_0\cap C)\\
(a_0,a_1,b_0,\dots,b_{n+1},c)&\mapsto&\begin{cases}(a_0/a_1,c,a_1)\textrm{ if }a_1\neq 0\\
((b_{n+1}-a_1^n)/c,c,a_1)\textrm{ if }c\neq 0.
\end{cases}
\end{eqnarray*}
One can check, using the relations given by the ideal $I$ in Definition~\ref{def: hatP}, that these morphisms indeed are isomorphisms.
\end{proof}

\begin{remark}
The variety $\hat P$ is an $\SL_2$-embedding since $Q$ is, and we only contracted a curve of fixed-points in $Q\setminus\SL_2$. It is also clear that $\hat P$ is three dimensional with the origin as its only singular point.
\end{remark}

\begin{remark}\label{rem:pullback}
Restricting $\psi$ to $\SL_2$, we get
\begin{eqnarray*}
\psi|_{\SL_2}\colon\SL_2&\longto&\hat P\\
\begin{pmatrix} x & u\\ y & v\end{pmatrix}&\mapsto& (y,x,vy^{n+1},vxy^n,\dots,vx^{n+1},ux^{n+1}),
\end{eqnarray*} so in particular we have $\psi^*(f_0)=y$ and $\psi^*(f_1)=x$.
\end{remark}

We start our preparations for the proof of Proposition~\ref{funktionsiso}.
\begin{definition}
Let us define the bidegree of nonzero monomials in $\C[v,t,u]$ as $\bideg(ct^iv^ju^k)=(k,j)\in\N^2$,  $\bideg(0)=(-\infty,-\infty)$, and then we extend this to a function $\bideg\colon\C[v,t,u]\longto\N^2\cup\{(-\infty,-\infty)\}$ by taking the maximal bidegree of the terms, with respect to the lexicographical order. For example $\bideg(t^7u^5+2v^3u^4+3vu^5)=(5,1)$.
\end{definition}

\begin{lemma}\label{lem:reduction}
Let $F\in\C[t,v,u]$ be a nonzero polynomial with $\bideg$-leading term $ct^iv^ju^k$, where $i\leq j+kn+2k$. Then $F=\tilde F+L$ for some $$\tilde F\in\C[v,vt,u,ut,\dots,ut^{n+1},ut^{n+2}-v^nt^{n+1}]\quad\textrm{and}\quad L\in\C[t,v,u]$$ with $\bideg(L)<\bideg(F)$.
\end{lemma}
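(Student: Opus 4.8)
The plan is to turn the statement into a combinatorial question about which monomials occur as $\bideg$-leading terms of products of the generators $f_0=v,\ f_1=vt,\ g_m=ut^m\ (0\le m\le n+1),\ h=ut^{n+2}-v^nt^{n+1}$ of the algebra $R:=\C[v,vt,u,ut,\dots,ut^{n+1},ut^{n+2}-v^nt^{n+1}]$, and then to cancel the entire top-bidegree part of $F$ in one step. I read the hypothesis as saying that $ct^iv^ju^k$ is the term of $F$ of maximal bidegree $(k,j)$ and, among those, of maximal $t$-degree; hence every term of $F$ of bidegree $(k,j)$ has the form $c't^{i'}v^ju^k$ with $i'\le i\le j+(n+2)k$.

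First I would record the leading terms. Every generator but $h$ is a monomial, and $h$ has the unique $\bideg$-maximal term $ut^{n+2}$ of bidegree $(1,0)$, its other term $-v^nt^{n+1}$ having bidegree $(0,n)<(1,0)$. Since bidegree is additive on monomials and the lexicographic order on $\N^2$ is compatible with addition, a product
$$
M=f_0^{a_0}f_1^{a_1}g_0^{b_0}\cdots g_{n+1}^{b_{n+1}}h^{c}
$$
has a single term of maximal bidegree, obtained by multiplying the monomial $f_0^{a_0}f_1^{a_1}\prod_m g_m^{b_m}$ with the top term $(ut^{n+2})^{c}$ of $h^{c}$; explicitly it is
$$
t^{\,a_1+\sum_m m b_m+(n+2)c}\;v^{\,a_0+a_1}\;u^{\,\sum_m b_m+c},
$$
of bidegree $\bigl(\sum_m b_m+c,\,a_0+a_1\bigr)$.

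The heart of the proof is the claim that for every $(i',j,k)\in\N^3$ with $i'\le j+(n+2)k$ there is such a product $M$ whose unique top term equals $t^{i'}v^ju^k$. Comparing exponents, this amounts to solving $a_0+a_1=j$, $\sum_m b_m+c=k$ and $a_1+\sum_m m b_m+(n+2)c=i'$ in non-negative integers. I would argue this by reading the last equation as a distribution problem: each of the $j$ units of $a_0+a_1$ contributes $0$ or $1$ to $i'$ (according to whether it is placed in $f_0$ or $f_1$), and each of the $k$ units of $\sum_m b_m+c$ contributes some value in $\{0,1,\dots,n+2\}$ (according to which of $g_0,\dots,g_{n+1},h$ receives it). As each admissible contribution-set is an interval of consecutive integers containing $0$, the set of attainable totals $i'$ is exactly $\{0,1,\dots,j+(n+2)k\}$. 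This realizability step, together with the bound on $i$, is the only place the hypothesis is used, and is the step I expect to require the most careful phrasing.

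To finish, let $(k,j)=\bideg(F)$ and collect the terms of $F$ of that bidegree as $p(t)\,v^ju^k$ with $p(t)=\sum_{i'}c_{i'}t^{i'}\in\C[t]$; by the reading above $\deg_t p=i\le j+(n+2)k$. For each $i'$ with $c_{i'}\neq 0$ choose, by the realizability claim, a product $M_{i'}\in R$ whose unique top term is $t^{i'}v^ju^k$, and put $\tilde F:=\sum_{i'}c_{i'}M_{i'}\in R$. Since each $M_{i'}$ contributes exactly $t^{i'}v^ju^k$ in bidegree $(k,j)$ and nothing else of that bidegree, the bidegree-$(k,j)$ part of $\tilde F$ equals $p(t)v^ju^k$, which is that of $F$. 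Hence $L:=F-\tilde F$ satisfies $\bideg(L)<(k,j)=\bideg(F)$, and $F=\tilde F+L$ is the desired decomposition.
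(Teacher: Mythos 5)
Your proof is correct and follows essentially the same strategy as the paper: exhibit an explicit product of the generators whose unique term of maximal bidegree is a prescribed monomial $t^{i'}v^ju^k$ with $i'\le j+(n+2)k$ (the paper does this by writing $i-j=(n+2)q+r$ with $0\le r\le n+1$, you by an interval/attainability argument, but these amount to the same computation). Your one genuine refinement is to cancel the entire bidegree-$(k,j)$ slice of $F$ at once, which makes the strict inequality $\bideg(L)<\bideg(F)$ literally true even when $F$ has several terms of maximal bidegree; the paper's single-monomial cancellation, read literally, only decreases the refined order (bidegree first, then $t$-degree), which is still enough for the iteration in Proposition~\ref{funktionsiso} but is stated less carefully.
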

\begin{proof}
If $i\leq j$, we take $\tilde F=c(tv)^iv^{j-i}u^k$. If $i>j$, we find integers $q,r$ so that $i-j=(n+2)q+r$ with $0\leq r\leq n+1$ and $0\leq q\leq k-1$, and then we take $\tilde F=c(tv)^jut^r(ut^{n+2}-v^nt^{n+1})^qu^{k-q-1}$. In both cases $\bideg(F-\tilde F)<\bideg(F)$.
\end{proof}

\begin{proposition}
\label{funktionsiso}
The $n+5$ functions $f_0,f_1,g_0,\dots,g_{n+1},h$ generate $\cO(Q)$ as a $\C$-algebra, and $\psi^*\colon\cO(\hat P)\stackrel\sim\longto\cO(Q)$ is an isomorphism.
\end{proposition}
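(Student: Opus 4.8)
The plan is to prove the two assertions at once. Since $\cO(\hat P)$ is generated by the coordinate functions $X_0,X_1,Y_0,\dots,Y_{n+1},Z$, whose images under $\psi^*$ are exactly $f_0,f_1,g_0,\dots,g_{n+1},h$, surjectivity of $\psi^*$ is literally the statement that these $n+5$ functions generate $\cO(Q)$. Injectivity, on the other hand, comes for free: by Proposition~\ref{prop:contraction} the morphism $\psi$ restricts to an isomorphism $Q\setminus C\stackrel\sim\to\hat P\setminus\{0\}$, so $\psi$ is dominant (in fact surjective) and $\hat P$ is irreducible, being the closure of the image of the irreducible variety $Q\setminus C$; hence $\psi^*\colon\cO(\hat P)\hookrightarrow\cO(Q)$ is injective. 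Everything therefore reduces to the generation statement.

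To prove generation I would restrict everything to the dense chart $Q_1=U_1\times\A^2$, identifying $\cO(Q)$ with a subring of $\cO(Q_1)=\C[t,u,v]$. The key bookkeeping observation is that, using the local description of the functions in the table before Definition~\ref{def: hatP}, the restrictions to $Q_1$ of the generators are precisely
$$f_0,f_1,g_0,\dots,g_{n+1},h\ \longmapsto\ v,\ vt,\ u,\ ut,\ \dots,\ ut^{n+1},\ ut^{n+2}-v^nt^{n+1},$$
that is, exactly the generators of the subalgebra $R':=\C[v,vt,u,ut,\dots,ut^{n+1},ut^{n+2}-v^nt^{n+1}]$ occurring in Lemma~\ref{lem:reduction}. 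Thus the subalgebra $R\subset\cO(Q)$ generated by $f_0,\dots,h$ satisfies $R|_{Q_1}=R'$, and the inclusion $R'\subset\cO(Q)|_{Q_1}$ is automatic. The whole content is the reverse inclusion $\cO(Q)|_{Q_1}\subset R'$.

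For this I would run a bidegree induction driven by Lemma~\ref{lem:reduction}. The one nonformal input is that the leading term $ct^iv^ju^k$ of a \emph{global} function $F$ automatically satisfies the hypothesis $i\le j+(n+2)k$ of that lemma, and this is exactly where the second chart enters. Indeed $F$ must also extend over $Q_0$; expressing $F$ in the $Q_0$-coordinates $(t_0,u_0,v_0)$ through the transition map (so $t=t_0^{-1}$, $v=t_0v_0$, $u=t_0^{n+2}u_0+t_0^{n+1}v_0^n$) and reading off the coefficient of the top monomial $u_0^kv_0^j$ gives $\sum_{i'}c_{i'jk}\,t_0^{(n+2)k+j-i'}$, which must be polynomial in $t_0$; hence every term $t^{i'}v^ju^k$ of maximal bidegree satisfies $i'\le(n+2)k+j$. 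Granting this, Lemma~\ref{lem:reduction} yields $F=\tilde F+L$ with $\tilde F\in R'$ and $\bideg(L)<\bideg(F)$; since $\tilde F\in R'=R|_{Q_1}$ is the restriction of a global function, $L=F-\tilde F$ is again global, its leading term again meets the hypothesis, and the induction over the well-ordered set $(\N^2,\le_{\mathrm{lex}})$ terminates with $F\in R'$. This proves $\cO(Q)|_{Q_1}=R'$, hence $R=\cO(Q)$, so $\psi^*$ is surjective and therefore an isomorphism.

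The main obstacle is precisely the leading-term estimate: the reduction of Lemma~\ref{lem:reduction} lives naturally only in the single coordinate ring $\C[t,u,v]$, and one must feed in the compatibility across the $v^n$-twisted gluing to guarantee that the lemma's hypothesis is met — working in one chart alone does not see this. A secondary, purely bookkeeping nuisance is that a global $F$ may carry several terms of the same maximal bidegree $(k,j)$ differing in the $t$-exponent, so a single application of Lemma~\ref{lem:reduction} need not strictly lower $\bideg(F)$; I would handle this by clearing the entire top $(k,j)$-block before descending, equivalently by refining the induction to the lexicographic order on $(k,j,i)$.
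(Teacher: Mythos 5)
Your proof is correct and follows essentially the same route as the paper: injectivity of $\psi^*$ is immediate (the paper gets it from normality of $Q$ and the fact that $C$ has codimension $2$, you from dominance of $\psi$), and generation is proved by the same bidegree induction via Lemma~\ref{lem:reduction}, with the hypothesis $i\le j+(n+2)k$ extracted from regularity of $F$ across the transition to the chart $Q_0$. Your two refinements --- checking that no cancellation occurs among same-bidegree terms after the coordinate change (they carry distinct powers of $t_0$), and clearing the entire top $(k,j)$-block so that $\bideg$ genuinely drops --- tidy up small imprecisions that the paper's version glosses over.
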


\begin{proof} Since $Q$ is normal and $C$ of codimension 2 we get a morphism
$$\cO(\hat P)\stackrel{\psi^*}\longto\cO(Q)\simeq\cO(Q\setminus C)\simeq\cO(\hat P\setminus\{0\}).$$ It is clearly injective, and the surjectivity follows from  $\cO(Q)=\C[f_0,f_1,g_0,\dots,g_{n+1},h]$, a fact that we now prove:\par

The regular functions on $Q$ are the elements of $\C(Q)$ which are defined everywhere on $Q_0$ and $Q_1$, i.e. they can be seen as the polynomial functions on $\cO(Q_1)=\C[u,v,t]$ which remain polynomial as functions on $Q_0$ after the coordinate change induced by the transition function $V_0\to V_1$ (c.f. Definition~\ref{def:Q}).\par

Let $F\in\cO(V_1)=\C[u,v,t]\subset\C(Q)$ be a nonzero regular function on $Q$ with $\bideg$-leading term $ct^iv^ju^k$. After the coordinate change, the $\bideg$-leading term of $F$ becomes
$$ct^{-i}(tv)^j(ut^{n+2})^k.$$ It follows in particular, since $F$ is regular on $Q$, that $t^{-i+j+kn+2k}$ is a polynomial, so $i\leq j+nk+2k$. Now we use 

Lemma~\ref{lem:reduction} in order to write $F=\tilde F+L\in\cO(V_1)$ with $\tilde F\in\C[v,vt,u,ut,\dots,ut^{n+1},ut^{n+2}-v^nt^{n+1}]$ and $\bideg(L)<\bideg(F)$. Repeating this procedure a finite number of times, we finally arrive at $L=0$ and the claim of the proposition follows.
\end{proof}

Finally, we announce the result which settles part $(1)$ of Theorem~\ref{Th: MT}

\begin{proposition}
We have $$\gr_D(\cO(\hat P))\simeq\C[x,y]\oplus\bigoplus_{\nu\geq 1}\langle x,y\rangle^{(n+2)\nu}t^\nu.$$
\end{proposition}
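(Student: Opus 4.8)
The plan is to work on the affine chart $Q_1$, where by Proposition~\ref{funktionsiso} we may identify $B=\cO(\hat P)$ with $\cO(Q)\subset\C[u,v,t]$, and to compute the ideals $\fm_\nu=\fm_\nu(\hat P)$ of Proposition~\ref{prop:grad} explicitly. By Remark~\ref{rem:actions} the structural $\G_a$-action is $(u,v)\mapsto(u+v^{n+2}s,v)$ on every $Q_i$, so the associated locally nilpotent derivation is $D=v^{n+2}\frac{\partial}{\partial u}$ on $\C[u,v,t]$. Its kernel meets $\cO(Q)$ in $\C[f_0,f_1]=\C[v,tv]$, which by Remark~\ref{rem:pullback} is exactly $\cO(S)=\C[x,y]$ with $y=f_0=v$ and $x=f_1=tv$, so that $t=x/y$. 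First I would record that any $F\in B_{\le\nu}$ has the form $F=\sum_{k=0}^\nu P_k(v,t)u^k$ on $Q_1$ with $D^\nu F=\nu!\,P_\nu v^{(n+2)\nu}$; hence $\fm_\nu=D^\nu(B_{\le\nu})$ is the ideal of $\C[x,y]$ generated by the elements $v^{(n+2)\nu}P_\nu$, as $P_\nu$ ranges over the top $u$-coefficients of the globally regular functions.

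For the inclusion $\fm_\nu\supseteq\langle x,y\rangle^{(n+2)\nu}$ I would compute $D$ on the generators directly: since $g_i=t^iu$ and $h=t^{n+2}u-t^{n+1}v^n$ on $Q_1$, one gets $D(g_i)=t^iv^{n+2}=x^iy^{n+2-i}$ for $i=0,\dots,n+1$ and $D(h)=t^{n+2}v^{n+2}=x^{n+2}$. These are precisely all $n+3$ monomials of degree $n+2$, so $\fm_1\supseteq\langle x,y\rangle^{n+2}$; the relation $\fm_\nu\fm_\mu\subseteq\fm_{\nu+\mu}$ of Proposition~\ref{prop:grad} then yields $\fm_\nu\supseteq\fm_1^{\,\nu}\supseteq\langle x,y\rangle^{(n+2)\nu}$.

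The main work is the reverse inclusion $\fm_\nu\subseteq\langle x,y\rangle^{(n+2)\nu}$, which I would extract from regularity on the other chart $Q_0$. Using the transition morphism from Definition~\ref{def:Q} one expresses the $Q_1$-coordinates through the $Q_0$-coordinates $(t_0,u_0,v_0)$ as $t=t_0^{-1}$, $v=t_0v_0$ and $u=t_0^{n+2}u_0+t_0^{n+1}v_0^n$. Since this is affine of degree one in the fibre variable, the coefficient of $u_0^\nu$ in $F$ comes only from the top term $P_\nu u^\nu$ and equals $P_\nu(t_0v_0,t_0^{-1})\,t_0^{(n+2)\nu}$; regularity of $F$ on $Q_0$ forces this to be a polynomial in $t_0,v_0$. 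Writing $P_\nu=\sum c_{ab}v^at^b$, this is exactly the condition $b\le a+(n+2)\nu$ on every monomial. Under $v=y$, $t=x/y$ one then has $v^{(n+2)\nu}v^at^b=x^by^{a+(n+2)\nu-b}$, a monomial of total degree $a+(n+2)\nu\ge(n+2)\nu$; hence $v^{(n+2)\nu}P_\nu\in\langle x,y\rangle^{(n+2)\nu}$ and therefore $\fm_\nu\subseteq\langle x,y\rangle^{(n+2)\nu}$. Combining the two inclusions gives $\fm_\nu=\langle x,y\rangle^{(n+2)\nu}$, and thus $\gr_D(\cO(\hat P))=\C[x,y]\oplus\bigoplus_{\nu\ge1}\langle x,y\rangle^{(n+2)\nu}t^\nu$.

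I expect the only delicate point to be the clean derivation of the constraint $b\le a+(n+2)\nu$: one must justify that no cancellation between the various $P_ku^k$ affects the top $u_0$-coefficient (which is immediate because the fibre change is degree one in $u_0$, so only $P_\nu u^\nu$ reaches $u_0$-degree $\nu$), and keep careful track of the substitution $t=x/y$. Everything else is routine bookkeeping of monomials, and the alternative of first proving that $\gr_D(B)$ is generated in degree $1$ would require essentially the same chart computation (one would still need the regularity bound to rule out a strictly larger graded algebra, as warned in the remark following Proposition~\ref{Prop:gr-uniqueness}), so I prefer the direct two-sided estimate above.
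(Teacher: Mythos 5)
Your proof is correct. The first half (computing $D(g_i)=x^iy^{\,n+2-i}$ and $D(h)=x^{n+2}$ on the chart $Q_1$ to get $\fm_1\supseteq\langle x,y\rangle^{n+2}$ and hence $\fm_\nu\supseteq\fm_1^\nu\supseteq\langle x,y\rangle^{(n+2)\nu}$) is exactly what the paper does. Where you diverge is the reverse inclusion: the paper disposes of it in one line by leaning on Proposition~\ref{funktionsiso}, i.e.\ on the fact that $\cO(\hat P)$ is generated as a $\C$-algebra by $f_0,f_1,g_0,\dots,g_{n+1},h$, so that $D^\nu(B_{\le\nu})$ lands in $\langle f_0,f_1\rangle^{(n+2)\nu}$ because each degree-one generator already does. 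You instead extract the bound $b\le a+(n+2)\nu$ on the top $u$-coefficient $P_\nu=\sum c_{ab}v^at^b$ directly from regularity on the chart $Q_0$, using that the fibre coordinate change $u=t_0^{n+2}u_0+t_0^{n+1}v_0^n$ is affine of degree one in $u_0$ so no lower $P_k$ can interfere, and that no cancellation occurs among the monomials $c_{ab}v_0^a t_0^{\,a-b+(n+2)\nu}$. This is essentially the same regularity estimate that underlies Lemma~\ref{lem:reduction} ($i\le j+kn+2k$), so your argument re-proves the relevant part of Proposition~\ref{funktionsiso} in the special situation at hand rather than citing it. What your route buys is a fully explicit, self-contained verification of the inclusion $\fm_\nu\subseteq\langle x,y\rangle^{(n+2)\nu}$, which the paper's ``It follows that'' leaves compressed (one does need, as you implicitly check, that $\langle x,y\rangle^{(n+2)\nu}\cO(Q)\cap\C[x,y]=\langle x,y\rangle^{(n+2)\nu}$); what the paper's route buys is brevity, by reusing the generation result it has already established.
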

\begin{proof}
Using Remark~\ref{rem:actions}, we see that the $\G_a$-action on $\hat P$ corresponds to the derivation $D\colon\cO(\hat P)\longto\cO(\hat P)$ given by $D(f_i)=0,\,D(g_i)=f_1^if_0^{n+2-i},\,D(h)=f_1^{n+2}$. It follows that $D^\nu(\cO(\hat P)_{\leq \nu})=\langle f_0,f_1\rangle^{(n+2)\nu}\subset\cO(\hat P)$ for each $\nu\geq 0$. Hence
$$\gr_D(\cO(\hat P))=\C[f_0,f_1]\oplus\bigoplus_{\nu=1}^\infty\langle
f_0,f_1\rangle^{(n+2)\nu}t^\nu.$$ This finishes the proof since $\psi^*(f_0)=y$ and $\psi^*(f_1)=x$ (as in Remark~\ref{rem:pullback}).
\end{proof}

\begin{proposition}
The variety $\hat P$ is normal.
\end{proposition}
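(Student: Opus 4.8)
The plan is to deduce normality of $\hat P$ from the single fact, already established in Proposition~\ref{funktionsiso}, that $\hat P$ is precisely the affinization of $Q$, together with the general principle that the affinization of a normal variety is again normal. The point is that the genuine computational work has all been done in Proposition~\ref{funktionsiso}, so normality should follow formally; in particular I would deliberately avoid verifying Serre's criterion $(R_1)+(S_2)$ directly on the explicit generators of the ideal $I$ from Definition~\ref{def: hatP}, which would be an unpleasant and unnecessary grind.

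First I would observe that $Q$ is smooth, hence normal. Indeed, by Definition~\ref{def:Q} and the gluing description given just before the proof of Proposition~\ref{prop:sl2embedding}, the variety $Q$ is obtained by glueing the two charts $Q_0\simeq\A^1\times\A^2$ and $Q_1\simeq\A^1\times\A^2$ along an open subset via an isomorphism. Thus $Q$ is a smooth irreducible threefold (an $\A^2$-bundle over $\p^1$), and in particular it is normal.

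Next I would invoke the standard fact that for a normal irreducible variety the ring of global regular functions is integrally closed in its fraction field. Concretely, $\cO(Q)=\bigcap_{x\in Q}\cO_{Q,x}$ inside the function field $\C(Q)$, and each local ring $\cO_{Q,x}$ is integrally closed in $\C(Q)$ by normality; since an arbitrary intersection of subrings of a field that are integrally closed in that field is again integrally closed in it, $\cO(Q)$ is integrally closed in $\C(Q)$. As $\mathrm{Frac}(\cO(Q))\subseteq\C(Q)$, being integrally closed in the larger field forces $\cO(Q)$ to be integrally closed in its own fraction field as well. Together with Proposition~\ref{funktionsiso}, which provides the isomorphism $\psi^*\colon\cO(\hat P)\stackrel\sim\longto\cO(Q)$, this shows that $\cO(\hat P)$ is a normal domain, whence $\hat P=\Spec(\cO(\hat P))$ is normal.

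The one subtlety to watch — and the place where I would be most careful — is the passage from ``integrally closed in $\C(Q)$'' to ``integrally closed in $\mathrm{Frac}(\cO(\hat P))$''. This is harmless here for two reasons: integral closedness of a ring in a field is automatically inherited by any intermediate subfield, and $\psi$ is birational, being an isomorphism away from the codimension-two curve $C$ by Proposition~\ref{prop:contraction}, so that in fact $\C(\hat P)=\C(Q)$. I expect no further obstacle, since the finite generation and affineness of $\hat P$ are already guaranteed by the explicit presentation $\cO(Q)=\C[f_0,f_1,g_0,\dots,g_{n+1},h]$ of Proposition~\ref{funktionsiso}.
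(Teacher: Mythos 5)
Your argument is correct and is essentially the paper's own proof: both deduce normality of $\cO(\hat P)\simeq\cO(Q)$ from the normality of $Q$ by writing $\cO(Q)=\bigcap_{q\in Q}\cO_{Q,q}$ as an intersection of integrally closed local rings inside $\C(Q)$. Your extra remarks (smoothness of $Q$ from the gluing description, and the passage from integral closedness in $\C(Q)$ to integral closedness in the fraction field) only make explicit what the paper leaves implicit.
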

\begin{proof}
Suppose that $f\in\C(Q)$ is integral over $\cO(Q)$. Then it is in particular integral over $\cO_{Q,q}$ for each $q\in Q$, and since $Q$ is normal, it follows that $f\in\cap_{q\in Q}\cO_{Q,q}=\cO(Q)$. Thus $\cO(Q)\simeq\cO(\hat P)$ is integrally closed and since $\hat P$ is an affine variety, we are finished.
\end{proof}

\begin{proof}[Proof of Theorem~$\ref{equimod}$]
Let $B_n:=\cO (\hat P_n)\subset\cO(\SL_2)$ and suppose that the affine $\SL_2$-extension is given by $\hat R=\Spec (C)$. By Lemma~\ref{primary}, we can choose an $n$ such that $\langle x,y\rangle^{n+2} \subset \fm_1(\hat R)$. Then we have $(B_n)_{\le 1} \subset C_{\le 1}$ and thus $B_n \subset C$, since $B_n$ is, as a $\C [x,y]$-algebra, generated by $(B_n)_{\le 1}$.
\end{proof}

\section{The second family of $\SL_2$-extensions}\label{sec:family2}
In this section we prove part $(2)$ of Theorem~\ref{Th: MT} by constructing a family $\hat P(p,q)$ of affine extensions of $\SL_2$, depending on two relatively prime natural numbers $p,q\in\N_{> 0}$, such that
$$\fm_\nu(\hat P(p,q))=\bigoplus_{p\alpha+q\beta\geq(p+q)\nu}\C x^\alpha y^\beta.$$
The numbers $p$ and $q$ are fixed throughout this entire section, and we will write $\hat P$ rather than $\hat P(p,q)$ and similarly for other objects that are introduced.\par
The key observation in order to construct the $\SL_2$-extension $\hat P$, is that we can equip $\SL_2$ with a $\G_m$-action and then realize $\SL_2$ as a $\G_m$-fibration over the quotient variety. The $\G_m$-action that we will use is defined as follows for $\lambda\in\G_m$.

$$\begin{pmatrix}x&u\\y&v\end{pmatrix}\lambda:=\begin{pmatrix}\lambda^p x & \lambda^{-q}u \\ \lambda^q y & \lambda^{-p}v\end{pmatrix}.$$

\begin{proposition}\label{Prop:Gmquotient}
The quotient morphism with respect to this $\G_m$-action is given by
\begin{eqnarray*}
\psi\colon\SL_2&\to& Y\simeq\{(a,b,c)\in\A^3\,\,|\,\,ac=b^q(b-1)^p\}, \\
\begin{pmatrix}x&u\\y&v\end{pmatrix}&\mapsto&(x^qu^p,xv,y^pv^q),
\end{eqnarray*}
It is a $\G_m$-fibration which is a principal $\G_m$-bundle above the regular part of $Y$.
Indeed $Y$ has at most two singular points and it is smooth 
if and only if $p=q=1$.
\end{proposition}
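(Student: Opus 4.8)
The plan is to verify directly that $\psi$ is the affine GIT quotient of the $\G_m$-action and then to read off the fibration and singularity statements from the weights. Under the action the coordinates $x,u,y,v$ carry weights $p,-q,q,-p$, so each of the three functions $a=x^qu^p$, $b=xv$, $c=y^pv^q$ has total weight zero and is therefore invariant. Using $\det=xv-uy=1$, i.e. $uy=b-1$, one computes $ac=(xv)^q(uy)^p=b^q(b-1)^p$, so that $\psi$ factors through $Y:=\{ac=b^q(b-1)^p\}$. This produces the morphism $\psi\colon\SL_2\to Y$ written in the statement.

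The next step is to identify $\psi$ with the quotient morphism by showing $\cO(\SL_2)^{\G_m}=\C[a,b,c]$ and that the relation ideal is $(ac-b^q(b-1)^p)$. A monomial $x^iu^jy^kv^l$ has weight $p(i-l)+q(k-j)$, so invariance means $p(i-l)=q(j-k)$; since $\gcd(p,q)=1$ this forces $i-l=qm$ and $j-k=pm$ for some $m\in\Z$. Substituting and using $xv=b$, $uy=b-1$, one sees that every invariant monomial is a polynomial in $a,b,c$ — a power of $a$ multiplies when $m>0$ and a power of $c$ when $m<0$. Hence $\C[a,b,c]$ surjects onto the invariant ring. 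Now $ac-b^q(b-1)^p$ is irreducible, being of degree one and primitive as a polynomial in $a$ over $\C[b,c]$ (its coefficients $c$ and $-b^q(b-1)^p$ are coprime), so $Y$ is an irreducible surface; comparing dimensions, the surjection $\C[a,b,c]/(ac-b^q(b-1)^p)\to\cO(\SL_2)^{\G_m}$ of $2$-dimensional domains is an isomorphism. Thus $Y=\SL_2\quot\G_m$ and $\psi$ is the quotient morphism, which is surjective since $\G_m$ is reductive.

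For the fibration and bundle claims I would analyze stabilizers. Because $\det=1$, at least one of $xv,uy$ is nonzero, so a nontrivial stabilizer forces $\lambda^p=1$ or $\lambda^q=1$; a short case check shows the stabilizer is nontrivial exactly on the diagonal torus $\{u=y=0\}$ (stabilizer $\mu_p$) and on the antidiagonal $\{x=v=0\}$ (stabilizer $\mu_q$), and these two single orbits map to $(0,1,0)$ and $(0,0,0)$ respectively. Over the complement of these two points the fibers are single free $\G_m$-orbits, so $\psi$ is a geometric quotient there; since $\G_m$ is a special group, a free action with affine geometric quotient is a Zariski-locally trivial principal $\G_m$-bundle. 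That $\psi$ is a $\G_m$-fibration in the sense defined above then follows since $\psi$ is affine, its fibers are the $\G_m$-orbits, and $\psi^{-1}(V)\quot\G_m\simeq V$ for affine open $V\subset Y$.

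Finally I would locate the singular points by the Jacobian criterion applied to $F=ac-b^q(b-1)^p$: one has $\partial_aF=c$, $\partial_cF=a$, and $\partial_bF=-b^{q-1}(b-1)^{p-1}\big((p+q)b-q\big)$, so a singular point must satisfy $a=c=0$, whence $b^q(b-1)^p=0$ and $b\in\{0,1\}$. Evaluating $\partial_bF$ shows that $(0,0,0)$ is singular iff $q\geq2$ and $(0,1,0)$ is singular iff $p\geq2$; hence $Y$ has at most two singular points and is smooth iff $p=q=1$. Note that the singular locus of $Y$ coincides precisely with the image of the non-free orbits, which is exactly what makes $\psi$ a principal bundle over the regular part. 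The step I expect to be the main obstacle is proving that $a,b,c$ already generate the entire invariant ring — the weight bookkeeping must be combined carefully with the $\SL_2$-relation; once this is secured, the irreducibility, freeness, and Jacobian computations are routine.
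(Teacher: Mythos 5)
Your proof is correct, but it follows a genuinely different route from the paper. The paper's proof is constructive: choosing $m,n\in\Z$ with $mq-np=1$, it writes down explicit $\G_m$-equivariant trivializations $Y_a\times\G_m\stackrel\sim\longto(\SL_2)_{xu}$ and $Y_c\times\G_m\stackrel\sim\longto(\SL_2)_{yv}$, which simultaneously exhibit the local triviality over $Y_a\cup Y_c=Y\setminus\{(0,0,0),(0,1,0)\}$ and identify the fibration; the singularity statement is then simply asserted. You instead compute the invariant ring directly by weight bookkeeping (showing every invariant monomial is a word in $a,b,c$), identify the relation ideal by an irreducibility-plus-dimension argument, locate the non-free orbits by a stabilizer computation, and invoke the fact that $\G_m$ is special to upgrade a free geometric quotient to a Zariski-locally trivial principal bundle; the singular points come from the Jacobian criterion. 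Your route makes explicit something the paper leaves implicit (that $a,b,c$ really generate $\cO(\SL_2)^{\G_m}$ and that $\psi$ is the quotient morphism), at the cost of appealing to general GIT facts (closedness of orbits, so that fibres are single orbits, deserves one sentence: there are no $\G_m$-fixed points since $\det=1$, so no orbit can degenerate) and to the specialness of $\G_m$; the paper's Bézout trivializations avoid all of that and are moreover reused afterwards (the transition function $(b-1)^mb^n$ appears in the subsequent remark). One small point common to both arguments: when, say, $p=1<q$, the point $(0,1,0)$ is regular but lies outside $Y_a\cup Y_c$ and outside your ``complement of the two points,'' so to get the principal bundle structure over the whole regular locus you should note that the action is still free over such a point and the same specialness argument applies there.
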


\begin{proof}
Since $\gcd(p,q)=1,$ we find $m,n\in\Z$ with $mq-np=1$, and we find $\G_m$-equivariant trivializations given respectively by
\begin{eqnarray*}
Y_a\times\G_m&\longto&(\SL_2)_{xu}\\
((a,b,c),\lambda)&\mapsto&
\begin{pmatrix}a^m\lambda^p& a^{-n}\lambda^{-q}\\ 
(b-1)a^n\lambda^q&ba^{-m}\lambda^{-p}
\end{pmatrix}
\end{eqnarray*}
and
\begin{eqnarray*}
Y_c\times\G_m&\longto&(\SL_2)_{yv}\\
((a,b,c),\lambda)&\mapsto&
\begin{pmatrix}bc^{-m}\lambda^{p} & (b-1)c^n\lambda^{-q}\\c^{-n}\lambda^{q}&c^m\lambda^{-p}\end{pmatrix}.
\end{eqnarray*}
The variety $Y$ is smooth at all points except possibly $(0,0,0)$, which is singular if and only if $q>1$, and $(0,1,0)$, which is singular if and only if $p>1$.

\end{proof}

\begin{remark}
The transition function from the first chart to the second in the above proof is given by $(a,b,c,\lambda)\mapsto(a,b,c,(b-1)^mb^n\lambda),$ i.e. multiplication by $(b-1)^mb^n\in\G_m$.
\end{remark}

\begin{remark}
The $\G_m$-action on $\SL_2$ has possibly nontrivial stabilizers only along  
two orbits, namely $\psi^{-1}(0,1,0)$ and $\psi^{-1}(0,0,0)$ with stabilizers the groups $C_p$ and $C_q$ of $p$-th and $q$-th roots of unity respectively.
\end{remark}

\begin{definition} We define
 $$
 \hat P:= \SL_2  \times^{\G_m} \A^1
$$ 
\end{definition}
This definition is analogous to Definition~\ref{def:Q}, in the sense that $\hat P$ is the orbit space with respect to the action
$$
\G_m\times(\SL_2\times\A^1)\longto \SL_2\times\A^1,\quad (\lambda,(x,y))\mapsto(x\lambda^{-1},\lambda y).
$$
A main difference is that $\hat P=(\SL_2  \times \A^1)\quot \G_m$ is already an affine variety in this situation, $\G_m$ being reductive. We also get normality of $\hat P$ for free, since $\SL_2\times\A^1$ is normal.

\begin{remark}\label{rem:cOB} Intuitively $\hat P$ is again obtained from $\SL_2$ by replacing the fiber $\G_m$ in the fibration in Proposition~\ref{Prop:Gmquotient} by $\A^1\supset\G_m$ -- though the replacement process itself is not as obvious as in the case of a principal bundle. In any case $\cO(\hat P)\subset\cO(\SL_2)$ consists of those functions which are defined as $\lambda\in\G_m$ tends to 0.
\end{remark}
It is a straightforward verification to check that $$(A*s)\lambda=(A\lambda)*(\lambda^{-(p+q)}s)$$ holds, for the standard right $\G_a$-action on $\SL_2$ which is given by
$$\SL_2\times\G_a\to\SL_2,\quad (z,s)\mapsto z*s:=z\begin{pmatrix}1&s\\0&1\end{pmatrix}.$$ This is well known to be equivalent with the fact that the locally nilpotent derivation $D$ on $\cO(\SL_2)$ which corresponds to the $\G_a$-action is homogeneous of degree $p+q$ with respect to the grading which corresponds to the $\G_m$-action. This will be used in the proof of the following proposition which will settle part $(2)$ of Theorem~\ref{Th: MT}.
\begin{proposition} The $\G_a$-action on $\SL_2$ extends to $\hat P$ and for
$B:=\cO(\hat P)$ we have
$$
\fm_\nu(\hat P)=\bigoplus_{p\alpha+q\beta\geq(p+q)\nu}\C x^\alpha y^\beta.
$$
\end{proposition}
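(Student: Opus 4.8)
The plan is to combine the explicit description of $B=\cO(\hat P)$ as the non-negatively graded part of $A:=\cO(\SL_2)$ with the homogeneity of $D$. Write $A=\bigoplus_{d\in\Z}A_d$ for the weight decomposition induced by the $\G_m$-action of Proposition~\ref{Prop:Gmquotient}, so that $\deg x=p$, $\deg y=q$, $\deg u=-q$, $\deg v=-p$. First I would make Remark~\ref{rem:cOB} precise by computing the invariants $\cO(\SL_2\times\A^1)^{\G_m}=\bigoplus_{d\ge0}A_d\,w^d$, where $w$ is the coordinate on $\A^1$; via $fw^d\mapsto f$ this identifies $B=\bigoplus_{d\ge0}A_d$, the non-negatively graded part of $A$.

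The extension of the $\G_a$-action is then immediate from homogeneity. Since $Du=x$, $Dv=y$ and $Dx=Dy=0$, the derivation $D=x\partial_u+y\partial_v$ is homogeneous of degree $p+q>0$, as already noted. Hence $D(A_d)\subset A_{d+p+q}$, so $D(B)\subset B$, and $D$ restricts to a locally nilpotent derivation of $B$ (local nilpotency being inherited from $A$). The associated $\G_a$-action on $\hat P=\Spec(B)$ extends the structural one on $\SL_2$ because $B\hookrightarrow A$ is $D$-equivariant. Moreover $B^{\G_a}=B\cap\ker D=B\cap\C[x,y]=\C[x,y]$, which confirms that $\hat P$ is an affine extension of $\SL_2$ over $S=\A^2$ and that $\fm_\nu(B)\subset\C[x,y]$.

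For the formula, recall from Proposition~\ref{prop:grad} that $\fm_\nu(B)=D^\nu(B_{\le\nu})$ with $B_{\le\nu}=B\cap A_{\le\nu}$, and I would prove the two inclusions separately. For the inclusion $\subseteq$, note that $B_{\le\nu}$ is a graded subspace supported in weights $\ge0$ and that $D^\nu$ raises weight by $\nu(p+q)$; since also $\fm_\nu(B)\subset\fm_\nu(A)=\langle x,y\rangle^\nu$, every homogeneous element of $\fm_\nu(B)$ is a monomial $x^\alpha y^\beta$ with $p\alpha+q\beta\ge\nu(p+q)$. For the reverse inclusion $\supseteq$, fix such a monomial. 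The key numerical point is that the inequality forces $\alpha+\beta\ge\nu$ (because $p\alpha+q\beta\le\max(p,q)(\alpha+\beta)$ and $\tfrac{p+q}{\max(p,q)}\ge1$), so one may choose $\gamma,\delta\ge0$ with $\gamma+\delta=\nu$, $\gamma\le\alpha$, $\delta\le\beta$. Setting $m:=x^{\alpha-\gamma}y^{\beta-\delta}u^\gamma v^\delta$, its $(u,v)$-degree is $\nu$ so $m\in A_{\le\nu}$, and its weight is
\[
p(\alpha-\gamma)+q(\beta-\delta)-q\gamma-p\delta=p\alpha+q\beta-(p+q)(\gamma+\delta)=p\alpha+q\beta-(p+q)\nu\ge0,
\]
so $m\in B_{\le\nu}$; crucially this value is independent of the chosen split. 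Finally $D^\nu m=x^{\alpha-\gamma}y^{\beta-\delta}D^\nu(u^\gamma v^\delta)=\nu!\,x^\alpha y^\beta$, whence $x^\alpha y^\beta\in\fm_\nu(B)$.

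The one genuinely delicate point, and the step I would write out most carefully, is the inclusion $\supseteq$: one must verify that the weight of the candidate preimage $m$ is non-negative exactly when $p\alpha+q\beta\ge(p+q)\nu$, and that a split of $\nu$ compatible with $(\alpha,\beta)$ exists. Both reduce to the elementary computation above; the pleasant feature is that the weight of $m$ collapses to $p\alpha+q\beta-(p+q)\nu$ no matter how $\nu$ is distributed between $u$ and $v$, so no optimization over the split is required. The grading-and-weight-shift argument disposes of the reverse inclusion with no computation. As a consistency check I would verify the formula against Remark~\ref{Rem:MT}: for $p=q=1$ it gives $\langle x,y\rangle^{2\nu}$, matching $\hat P_0=\hat P(1,1)$, and for $(p,q)=(2,1)$ it yields $x^3\in\fm_2$ but $x^3\notin\fm_1^2$.
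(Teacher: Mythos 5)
Your proof is correct and follows essentially the paper's route: identify $B$ with the non-negative weight part of $\cO(\SL_2)$, observe that $D$ is homogeneous of degree $p+q>0$ so that $B$ is $D$-stable and the $\G_a$-action extends, and read off $\fm_\nu$ from the weight shift; where the paper simply notes that the $\G_m$-grading descends to $\gr_D(\cO(\SL_2))$ and takes the non-negative part, you verify the resulting surjectivity onto $(\langle x,y\rangle^\nu)_{\ge\nu(p+q)}$ by exhibiting explicit monomial preimages $x^{\alpha-\gamma}y^{\beta-\delta}u^\gamma v^\delta$, which is a welcome extra detail. Two harmless slips: $D^\nu(u^\gamma v^\delta)=\gamma!\,\delta!\,x^\gamma y^\delta$ rather than $\nu!\,x^\gamma y^\delta$, and a weight-homogeneous element of $\fm_\nu(B)$ is in general a linear combination of monomials of equal weight rather than a single monomial --- neither affects the argument.
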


\begin{proof} The $\G_m$-action on $\SL_2$ corresponds to a $\Z$-grading
$
\cO(\SL_2)= \bigoplus_{\mu=-\infty}^\infty \cO(\SL_2)(\mu),
$
with respect to which the locally nilpotent derivation $D\colon \cO(\SL_2) \to \cO(\SL_2)$  is homogeneous of degree $p+q$. Since $x,y,u,v$ are of degree $p,q,-q,-p$ respectively, we have
$$
\cO(\SL_2)(\mu)=\sum_{p(i-\ell)+q(j-k)=\mu} \C x^iy^ju^k v^\ell
$$
and as in Example~\ref{rem:ghideal} we have 
$$\cO(\SL_2)_{\le \nu}= \bigoplus_{k+\ell= \nu} \C [x,y] u^kv^\ell$$
for the $D$-filtration of $\cO(\SL_2)$.\par

It follows from Remark~\ref{rem:cOB} and the definition of $\cO(\SL_2)(\mu)$ that $B=\bigoplus_{\mu=0}^\infty \cO(\SL_2)(\mu)$ is the non-negative part of the $\Z$-graded algebra $\cO(\SL_2)$. In particular, the $\G_a$-action on $\SL_2$ extends to an action on $\hat P$, since $B$ is $D$-invariant, $D$ being homogeneous of degree $p+q \ge 2$.\par
Let us now determine $\gr_D(B)$ using the $\G_m$-decomposition of $\cO(\SL_2)$. Since the locally nilpotent derivation $D\colon \cO(\SL_2) \to \cO(\SL_2)$ is homogeneous, the $\G_m$-grading descends to the associated graded algebra
$$\gr_D (\cO(\SL_2)) = \C [x,y] \oplus \bigoplus_{\nu=1}^\infty \langle x,y\rangle^\nu t^\nu,$$
where the $\G_m$-grading on $\C [x,y]$ satisfies $\deg (x)=p, \deg (y)=q$ and associates the degree $-(p+q)$ to the variable $t$ (though $t \not\in \gr_D(\cO(\SL_2))$). Indeed $xt= \gr (u)$ and $yt=\gr (v)$.

It follows that
$$
\gr_D(B)= \gr_D(\cO(\SL_2))_{\ge 0} \hookrightarrow \gr_D(\cO(\SL_2)),
$$ 
where the subscript refers to the $\G_m$-grading. In other words,
$$\gr_D (B)=\C [x,y] \oplus \bigoplus_{\nu=1}^\infty \fm_\nu t^\nu$$
with $$\fm_\nu=(\langle x,y\rangle^\nu)_{\ge \nu (p+q)} = \bigoplus_{\alpha p + \beta q \ge \nu (p+q)} \C  x^\alpha y^\beta.$$
\end{proof}

\section{Small fixed point sets}\label{sec:otherext}
It follows from Proposition~\ref{extrivial} that the exceptional fiber $E=\hat P_n\setminus\SL_2$ conists of $\G_a$-fixed points for the $\SL_2$-extensions in the family that was constructed in section~\ref{sec:family1}. For the extensions $\hat P=\Spec(B)$ from section~\ref{sec:family2} it goes the same. This follows from the fact that $D(f)\in \bigoplus_{\mu = p+q}^\infty \cO(\SL_2)(\mu)$ for all $f\in B$. But since $p+q>0$, this implies that the exceptional fiber $E=\hat P\setminus\SL_2$ consists of fixed points of the $\G_a$-action, as all functions of positive $\G_m$-degree vanish as $\lambda\in\G_m$ tends to 0.\par
In this section, we construct some extensions with empty fixed point set and with one dimensional fixed point set, taking the $\SL_2$-extension $\hat P(1,1)$ as starting point.
\begin{proposition}
\label{transaffin} Assume that the exceptional fiber $E \hookrightarrow Y= \Spec (B)$ 
of an affine extension $Y \to \A^2$ of $P \to \A^2_*$ is (the support of) a Cartier divisor and coincides with the fixed point set $E= Y^{\G_a}.$ Let $C \hookrightarrow E$ be a closed subvariety.
Denote $\Bl_C (Y) \to Y$ the blowup of $Y$ with center $C \hookrightarrow E$ and $\tilde E \hookrightarrow \Bl_C (Y)$ the strict transform of $E$. Then 
$$Y_1:= \Bl_C (Y) \setminus \tilde E$$
is an affine extension. 
\end{proposition}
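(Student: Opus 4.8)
The plan is to show that $Y_1 := \Bl_C(Y) \setminus \tilde E$ is normal, affine, and carries a $\G_a$-action making it an affine extension in the sense of Definition~\ref{def:affext}. First I would set up the local picture: since $E \hookrightarrow Y$ is a Cartier divisor, it is locally defined by a single equation $s \in B$, and the exceptional fiber is precisely $\hat\pi^{-1}(\pt)$, so $s$ is (up to a unit) a generator of the pullback of $\fm_\pt$ along the Cartier structure. The blowup $\Bl_C(Y) \to Y$ with center $C \hookrightarrow E$ is an isomorphism over $Y \setminus C$, and in particular over $P = \hat\pi^{-1}(\A^2_*) = Y \setminus E$; this immediately gives a dominant open embedding $\iota_1 \colon P \hookrightarrow Y_1$ once I check that $P$ misses the strict transform $\tilde E$, which holds because $P \cap E = \emptyset$.

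The central step is to verify affineness of $Y_1$. I would argue that removing the strict transform $\tilde E$ of the Cartier divisor $E$ from the blowup has the effect of making the exceptional divisor of $\Bl_C(Y) \to Y$ (anti)ample relative to $Y$, so that the complement becomes affine. Concretely, over the affine $Y$ the blowup $\Bl_C(Y)$ is $\Proj$ of the Rees algebra of the ideal $\fa_C$ of $C$, and the locus where the total transform of $E$ equals the exceptional divisor — equivalently where $\tilde E$ is removed — is the basic affine open chart where the Cartier generator $s$ generates the extended ideal $\fa_C \cdot \cO_{\Bl}$. Since $E = V(s)$ is Cartier and $C \hookrightarrow E$, the section $s$ vanishes on $\tilde E$ to higher order; thus $\Bl_C(Y) \setminus \tilde E$ is exactly the non-vanishing locus of a section of the exceptional (anti-ample) line bundle, hence affine. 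This is the step I expect to be the main obstacle, since it requires pinning down precisely why the strict transform rather than the total transform is the right divisor to delete, and controlling the Rees-algebra charts carefully.

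Next I would transport the $\G_a$-action. Because $E = Y^{\G_a}$ is the fixed-point locus and $C \hookrightarrow E$ is therefore $\G_a$-invariant, the center of the blowup is $\G_a$-stable, so $\G_a$ acts on $\Bl_C(Y)$ by functoriality, preserves the strict transform $\tilde E$, and hence acts on $Y_1 = \Bl_C(Y) \setminus \tilde E$. The embedding $\iota_1$ is $\G_a$-equivariant because it agrees with $\iota$ away from $C$. Normality of $Y_1$ follows from normality of $Y$: the blowup of a normal variety along a subvariety need not be normal in general, but I would either appeal to normality of the relevant chart directly, or simply replace $Y_1$ by its normalization as in Proposition~\ref{prop:pullbext}, which does not change the generic structure and preserves the equivariant open embedding of $P$.

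Finally I would assemble these pieces to match Definition~\ref{def:affext}: the composite $Y_1 \hookrightarrow \Bl_C(Y) \to Y \to \A^2$ gives the quotient morphism $\hat\pi_1$, the identification $\iota_1(P) = \hat\pi_1^{-1}(\A^2_*)$ holds because the blowup and the deletion of $\tilde E$ both take place inside the fiber over $\pt$, and the diagram commutes by construction. The invariant-ring diagram from the Remark following Definition~\ref{def:affext} then forces $Y_1 \quot \G_a = \A^2$, confirming that $Y_1$ is a genuine affine extension of $P \to \A^2_*$.
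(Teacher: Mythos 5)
Your proposal is correct and follows essentially the same route as the paper: the paper likewise reduces affineness to the local Rees-algebra/affine-modification description $Y_1=\Spec\bigl(B[\tfrac{g_1}{f},\dots,\tfrac{g_s}{f}]\bigr)$ for $I(E)=(f)$ and $I(C)=(g_1,\dots,g_s)\ni f$ (citing Kaliman--Zaidenberg, Prop.~1.1), and obtains the $\G_a$-action on $Y_1$ from the invariance of the center $C\hookrightarrow E=Y^{\G_a}$. Your additional remarks on normality and on the open embedding of $P$ only make explicit points that the paper's proof leaves implicit.
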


\begin{proof} It is clear that $Y_1$ inherits a $\G_a$-action, since the center of the blowup is fixed by the $\G_a$-action on $Y$. It remains to show $Y_1$ is affine. It is enough to check that the morphism $Y_1\to Y$ is affine. After passing to a cover of affine open subsets of $\Spec(B)$, we may assume that $E \hookrightarrow Y$ is given by one function:
$I(E)= (f) \subset B=\cO (Y)$. But then, if $I(C)=(g_1,\dots,g_s) \ni f$ we have
$$Y_1= \Spec (B\big[\frac{g_1}{f}, \dots,\frac{g_s}{f}\big]),$$ see~\cite[Prop. 1.1]{KaZa99}.
\end{proof}

Now we start with $Y=\hat P_0 \supset \SL_2$ and take $Y_1:=\Bl_a(Y) \setminus \tilde E$ with the exceptional fiber $E \hookrightarrow \hat P_0$ and some point $a \in E$. Using the realization of $Y$ as locally trivial bundle over $\PP^1$, we see that
we may think of $a \in Y$ as the origin in
$$a=(0,0,0) \in  \A^3 = \Spec (\C[x,y,z])=:U$$
where $(x,y,z) \mapsto [1:z]$ is the bundle projection, while
$\A^1 \times \A^1_* \times \A^1$ is $\G_a \rtimes_\sigma \G_m \times \A^1$.
Furthermore the $\G_a$-action corresponds to
$$D\colon \C [x,y,z] \to \C [x,y,z], x \mapsto y^2, y \mapsto 0, z \mapsto 0$$ 
and $E \cap U=\A^1 \times 0 \times \A^1$. Then above $U$, in the blowup, we have
$$U_1:= \Spec(\C[x/y,y,z/y]),$$ with $D(x/y)=y,$ and $D(y)=0=D(z/y)$. If we take $\xi=x/y, \eta=y, \zeta=z/y$, we see that $\G_a$ acts linearly on $U_1=\A^3=
\Spec (\C[\xi, \eta, \zeta])$, namely $D= \eta \frac{\partial}{\partial \xi}.$

Note that
$$
Y_1 \supset U_1 \supset E_1= \A^1 \times 0 \times \A^1.
$$
Now let us apply the recipe of Proposition~\ref{transaffin} with some subvariety $C \hookrightarrow E_1$. We obtain
an affine extension $Y_2$. We discuss several choices of $C$:\par
$(1)$ If $C=\{ (0,0,0)\}$, the exceptional fiber is naturally isomorphic to
$$
E_2 \simeq \PP(\A^3) \setminus \PP (\A^1 \times 0 \times \A^1)
$$
with the restriction of the induced linear $\G_a$-action on
$\PP(\A^3) \simeq \PP(T_0(\A^3))$. Hence the $\G_a$-action on $Y_2$ is free.\par
$(2)$ Now let $C \hookrightarrow E_1= \A^1 \times 0 \times \A^1$
be a smooth curve. For the fiber $F_b$ over a point $b \in C$ there is a natural isomorphism 
$$
F_b \simeq \PP (\A^3/T_b(C)) \setminus \{ (\A^1 \times 0 \times \A^1)/T_b(C)\}.$$ 
We distinguish two cases:
\begin{enumerate}
\item If $T_b(C)=\C ( 1,0,0)$, the $\G_a$-action on $F_b$ is trivial.
\item If $T_b(C)=\C ( \alpha,0,\beta)$ with $\beta \not=0$, the $\G_a$-action on $F_b$ is free. So if $C$ is not a line parallel to $\C (1,0,0)$, the fixed point set has at most dimension one.
\end{enumerate}


\begin{thebibliography}{PaRoVu2001}
\bibitem[DuFi11]{DuFi11}
\textsc{Adrien Dubouloz and David R. Finston:}
\textit{On exotic affine 3-spheres.}
J. Algebraic Geom. \textbf{23} (2014), 445--469

\bibitem[Nag59]{Nag59}
\textsc{M. Nagata:}
\textit{Lectures on the Fourteenth Problem of Hilbert}
Lecture Notes, Tata Institute, Bombay, 31, (1959)

\bibitem[Fie94]{Fie94}
\textsc{K-H. Fieseler:}
\textit{On complex affine surfaces with $\C^+$-action.}
Comment. Math. Helv. \textbf{69} (1994), no. 1, 5--27. MR 1259603 (95b:14027)

\bibitem[KaZa99]{KaZa99}
\textsc{S. Kaliman and M. Zaidenberg:}
\textit{Affine modifications and affine hypersurfaces with a very transitive automorphism group.}
Transformation groups, Vol. 4, No. 1, 1999, pp. 53--95.

\bibitem[Hor64]{Hor64}
\textsc{G. Horrocks:}
\textit{Vector bundles on the punctured spectrum of a local ring.}
Proc. London Math. Soc. (3) \textbf{14} (1964), 689--713. MR0169877 (30\#120).

\bibitem[TLam06]{TLam06}
\textsc{T.Y. Lam:}
\textit{Serre's problem on projective modules.}
Springer monographs in mathematics, Springer, 2006.

\end{thebibliography}
\end{document}